\newcommand{\complex}{\mathbb{C}}
\newcommand{\rsphere}{ {\mathbb C_\infty} }
\newcommand{\disk}{{\mathbb D}}
\newcommand{\ucirc}{ {\mathbb S}}
\newcommand{\0}{\emptyset}
\newcommand{\lam}{{\mathcal L}}
\newcommand{\cl}{\overline}
\newtheorem{thm}{Theorem}[section]
\newtheorem{lem}[thm]{Lemma}
\newtheorem{cor}[thm]{Corollary}
\newtheorem{prop}[thm]{Proposition}
\theoremstyle{definition}
\newtheorem{defn}[thm]{Definition}
\theoremstyle{remark}
\newtheorem{rem}[thm]{Remark}
\numberwithin{equation}{section}
\begin{document}

\title{Unicritical and Maximally Critical Laminations}



\author{Brittany E. Burdette}
\address{Mathematics - UAB Birmingham AL 35294-0001} 
\email[Brittany E. Burdette]{beburd@uab.edu}
\author{Caleb S. Falcione}
\address{Mathematics - UAB Birmingham AL 35294-0001} 
\email[Caleb Falcione]{caleb.falcione@gmail.com}
\author{Cameron G. Hale}
\address{Mathematics - UAB Birmingham AL 35294-0001} 
\email[Cameron G. Hale]{CameronGHale@outlook.com}
\author{John C. Mayer}
\address{Mathematics - UAB Birmingham AL 35294-0001} 
\email[John C. Mayer]{jcmayer@uab.edu}

\keywords{complex dynamics, circle dynamics, angle d-tupling, covering map, rotation set, laminations of the unit disk}

\subjclass[2010]{Primary: 37F20}

\date{Version of \today}

\begin{abstract}
We study the correspondence between unicritical laminations and maximally critical laminations with rotational and identity return polygons. Laminations are a combinatorial and topological way to study Julia sets. Laminations give information about the structure of parameter space of degree $d$ polynomials with connected Julia sets. 
\end{abstract}

\maketitle

\section{INTRODUCTION} 
\subsection{Motivation}
Definitions and facts in this section are adapted from Milnor \cite{Milnor:2006}. Let $P: \complex\to\complex$ be a polynomial of degree $d\ge 2$: $$P(z)=a_dz^d+a_{d-1}z^{d-1}+\dots+a_1z+a_0$$For $z_0 \in\complex$, we define the {\em iterates} of $z_0$ to be:  $$P^n(z_0) := P(P^{n-1}(z_0)).$$
We define the {\em forward orbit} of $z_0$ to be the set:
$$\{z_0,P(z_0),P^2(z_0),\dots,P^n(z_0),\dots\}.$$Compactify $\complex$ to the Riemann sphere $\rsphere$ by adding the point at infinity. For a polynomial, $P$, $\infty$ is an attracting fixed point: for $z\in\complex$ with $|z|$ sufficiently large, $\lim_{n\to\infty} P^n(z)=\infty$. The {\em basin of attraction} of $\infty$ is:  $$B_\infty:=\{z\in\complex\mid \lim_{n\to\infty} P^n(z)=\infty\}$$
By continuity, $B_\infty$  is an open set. Let $P$ be a polynomial of degree $d\ge 2$. We define the following sets in $\complex$ associated with $P$:
\begin{enumerate}
    \item The {\em Julia set} $J(P)$ := boundary of $B_\infty$.
    \item  The {\em Fatou set} $F(P)$:= $\rsphere\setminus J(P)$.
    \item The {\em filled Julia set}  $K(P):= \complex\setminus B_\infty$.
\end{enumerate}

It is known that $J(P)$ is nonempty, compact, and perfect; $K(P)$ is full (does not separate $\complex$), but $J(P)$ may separate $\complex$. We will only consider cases when $J(P)$ is connected. Periodic points of $P$ can be attracting, repelling, or indifferent. Attractive periodic orbits are in the Fatou set. Repelling periodic orbits are in the Julia set. Components of the Fatou set are called Fatou domains. There is only one unbounded Fatou domain whereas there can be zero, one, or countably infinitely many bounded Fatou domains.

An important part of complex dynamics is understanding the structure of parameter spaces. For example, the {\em unicritical degree} $d$ {\em connectedness locus} would be defined as such: $$\mathcal{M}_d=\{c\in \complex \mid \text{$J(P_c)$\ is\ connected}\}$$ where $P_c = z^d + c.$

The unicritical degree $d$ connectedness locus is a way of studying the unicritical polynomials of degree $d$ with connected Julia sets all at once.  There is also a multi-critical degree $d$ connectedness locus. Consider the family of monic and centered polynomials of degree $d$. 
$$ \mathcal{P}_d:=\{(a_0, a_1, \dots, a_{d-2})\in\complex^{d-1} \mid P(z)=z^d+0z^{d-1}+a_{d-2}z^{d-2}+ \dots+ a_0\}$$
The \emph{degree $d$ connectedness locus} is: $$\mathcal{C}_d = \{(a_0, a_1, \dots, a_{d-2})\in\complex^{d-1} \mid P\in \mathcal{P}_d   \text{ and\ $J(P)$\ is\ connected}\}$$ We study one of the connections between these two parameter spaces.

\subsection{Laminations}

Laminations are a combinatorial and topological way to study Julia sets. Unicritical laminations (Definition \ref{unicritical}) appear, for example, on the boundary of the main cuboid for cubic polynomials. A rotational polygon in a lamination corresponds to a fixed point in the Julia set, and an identity return polygon corresponds to a branch point in the Julia set that returns with no rotation.

Most of the definitions and facts in this section come from \cite{Blokh:2011}, \cite{mimbs:2013}, \cite{Cosper:2016}, and \cite{Schleicher:2009}. 

\begin{defn}[$\sigma_d$ map]
We use ``d-nary" coordinates on the circle $\ucirc$.
The map $\sigma_d : \ucirc \to \ucirc$ is defined to be $\sigma_d (t) = dt \pmod{1}$.  For example, $\sigma_2$ has binary coordinates, $\sigma_3$ has ternary, $\sigma_4$ has quarternary, and so on.
\end{defn}

\begin{figure}
    \centering
    \includegraphics[width = 2.1in]{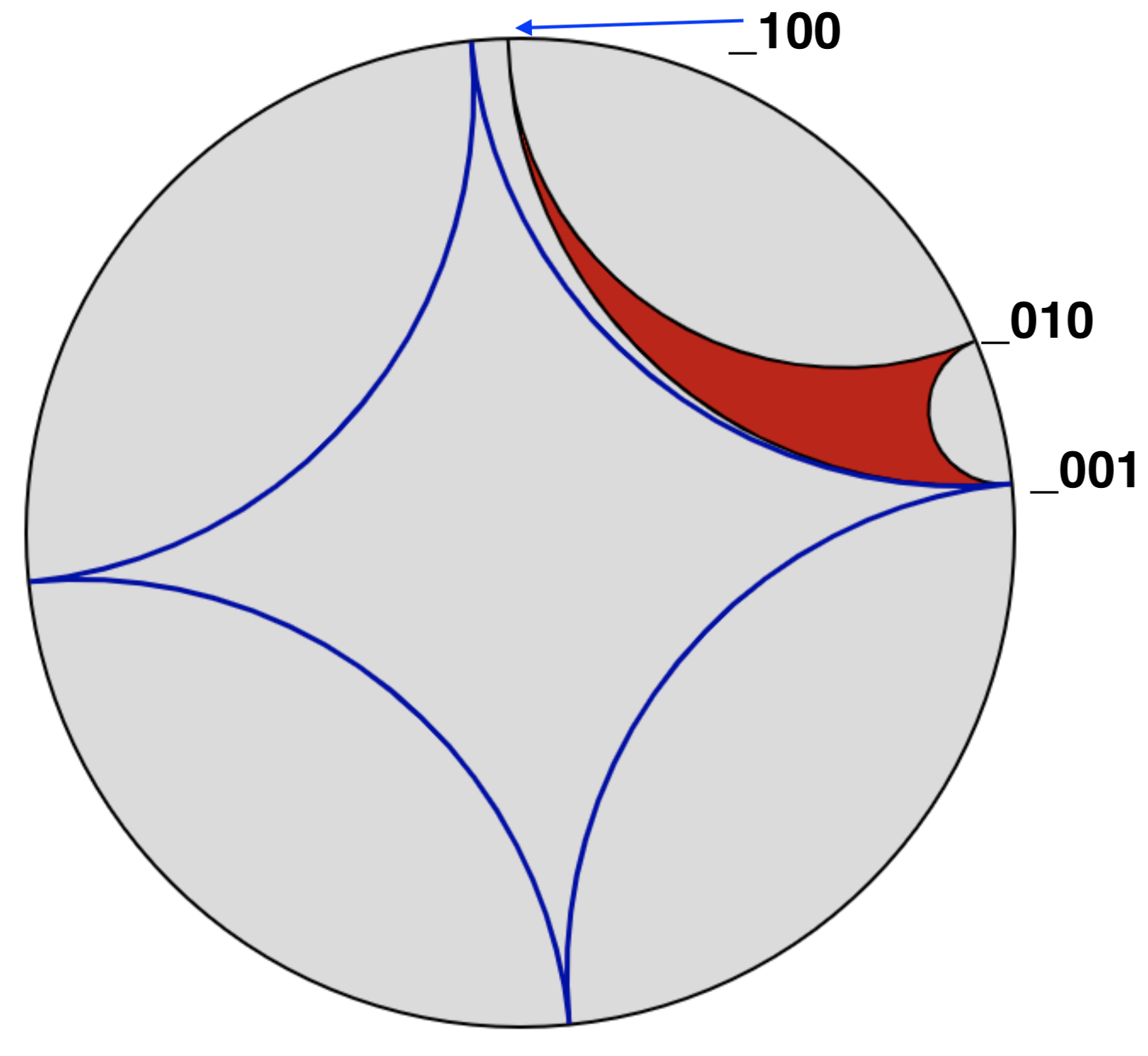}
    \includegraphics[width = 2in]{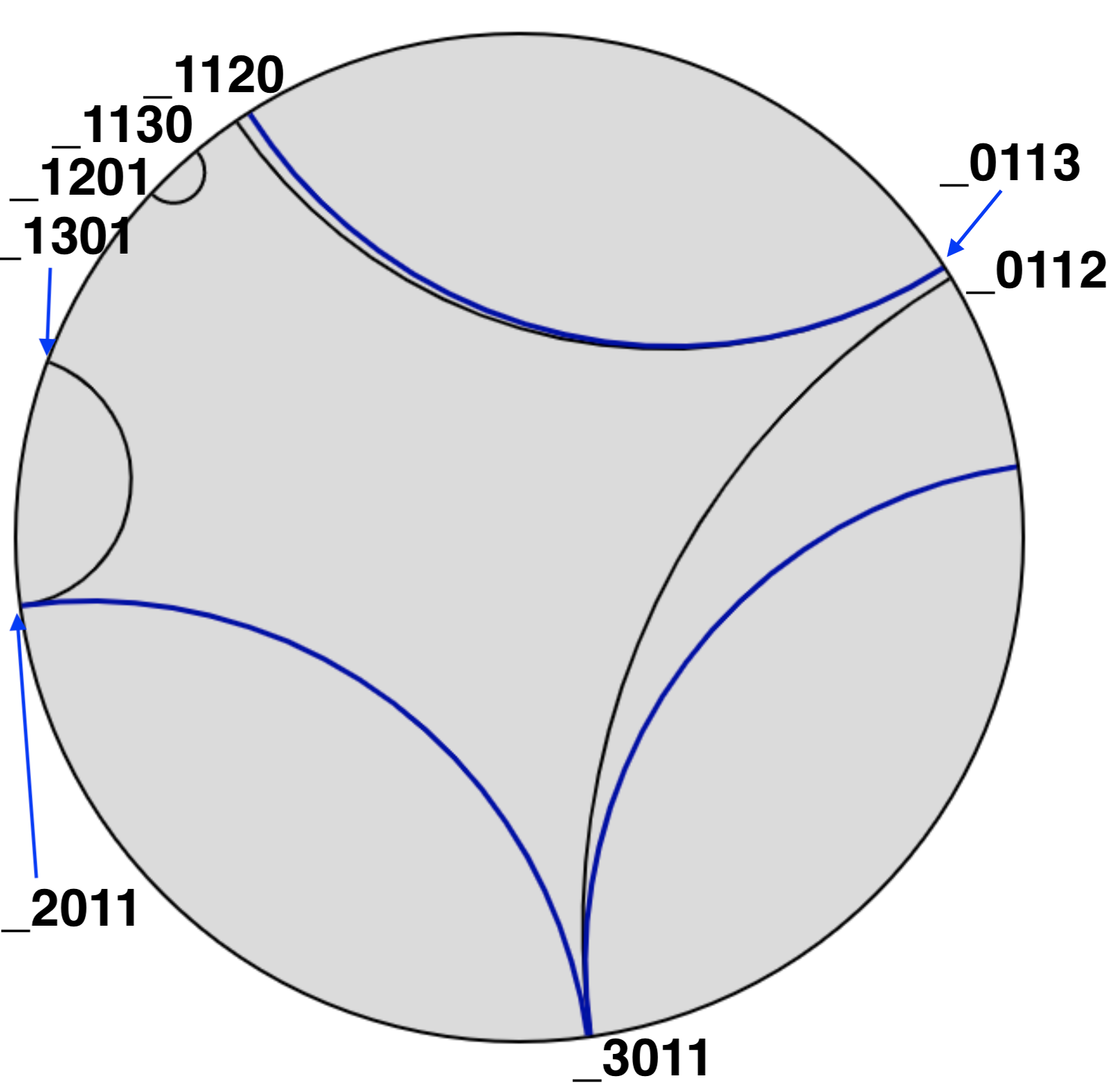}
    \caption[$d$-nary Coordinates]{
    Here we have two different examples of $d$-nary (in this case 4-nary) coordinates. On the left, we only use the digits 0 and 1, but on the right we use all 4 digits: 0, 1, 2, and 3. The underscore that precedes the digits indicates that those digits are repeated. We generate all lamination pictures except Figure \ref{Co-root Diagram} using Lamination Builder \cite{Falcione:2019}}.
    \label{fig:Coordinates}
\end{figure}

\begin{defn}[Lamination]
A {\em lamination}, $\lam$, is a collection of chords of the closed unit disk, $\cl\disk$, which we call {\em leaves}, such that:
\begin{enumerate}
    \item Any two leaves of $\lam$ meet, if at all, in a point of $\ucirc$.
    \item  $\lam^*:=\ucirc\cup\{\cup\lam\}$ is a closed subset of $\cl\disk$.
\end{enumerate}
In the case that condition (1) only is met, we call our collection of chords a {\em pre-lamination}.
\end{defn}

\begin{defn}[Leaf Mapping]
We denote mapping a leaf, ${\ell} = \cl{ab}$, with endpoints $a$ and $b$, under the map $\sigma_d$ by $$\sigma_d (\ell) = \cl{\sigma_d(a) \sigma_d(b)}$$

\end{defn}

\begin{defn}[Critical Chords]
A chord, $\ell = \cl{ab}$, is called {\em critical} when both its endpoints map to a single point $\sigma_d(a) = \sigma_d(b)\in\ucirc$. 
\end{defn}

\begin{defn}[Sibling Leaves]
Let $\ell_1\in\lam$ be a  leaf  and suppose
$\sigma_d(\ell_1)=\ell'$, for some non-degenerate leaf $\ell'\in\lam$.
A  leaf
$\ell_2\in\lam$, disjoint from $\ell_1$, is called a {\em sibling} of
$\ell_1$ provided $\sigma_d(\ell_2)=\ell'=\sigma_d(\ell_1)$.  A collection
${\mathcal S}=\{\ell_1, \ell_2, \dots, \ell_d\}\subset\lam$ is called a
{\em full sibling collection} provided that for each $i$,
$\sigma_d(\ell_i)=\ell'$ and for all $i\not=j$,
$\ell_i\cap\ell_j=\0$.
\end{defn}
\begin{figure}
    \begin{center}
     \includegraphics[width = 2in]{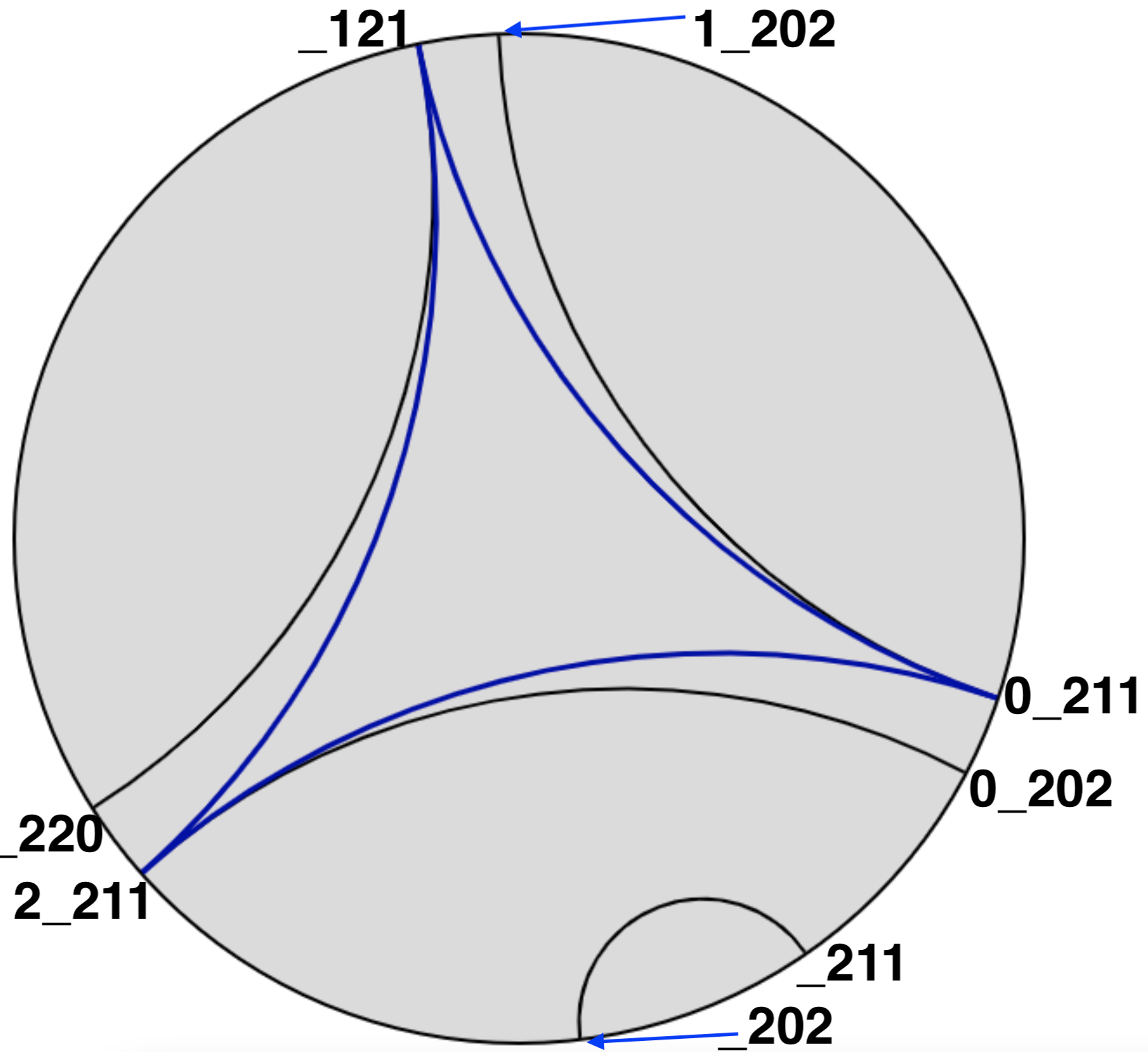}
    \caption[Full Sibling Collection]{Here we have a full sibling collection under $\sigma_3$. All of the long ``major" leaves map to one short ``minor" leaf. The central triangle is made of three critical chords creating an all critical triangle.}
\label{SibLam}
    \end{center}
\end{figure}

\begin{defn}[Sibling Invariant Lamination]\label{def: Sibling invaritant lam}
A lamination $\lam$ is said to be {\em sibling $d$-invariant} (or simply {\em invariant} if no confusion will result) provided that the following three statements hold:
\begin{enumerate}
\item (Forward Invariant) For every $\ell\in\lam$, $\sigma_d(\ell)\in\lam$.
\item (Backward Invariant) For every non-degenerate $\ell'\in\lam$,
there is a leaf $\ell\in\lam$ such that $\sigma_d(\ell)=\ell'$.
\item (Sibling Invariant) For every $\ell_1\in\lam$ with $\sigma_d(\ell_1)=\ell'$,
a non-degenerate leaf, there is a  full sibling collection $\{\ell_1, \ell_2, \dots, \ell_d\}\subset\lam$ such that  $\sigma_d(\ell_i)=\ell'$.
\end{enumerate}
\end{defn}

\begin{rem}
A sibling $d$-invariant lamination induces an equivalence relation. Two points on $\ucirc$ are equivalent if they are joined by a finite concatenation of leaves. We consider laminations for which this results in a closed equivalence relation. Thus, the sibling invariant laminations we will be considering have a fourth condition from \cite{mimbs:2013} not listed in the definition:
\begin{enumerate}
    \item[(4)] $\lam$ has finite equivalence classes, and all leaves are boundary chords of the convex hulls of equivalence classes.
\end{enumerate}
\end{rem}


\begin{defn}[Gap] A {\em gap} in a lamination, $\lam$,is the closure of a component of $\cl\disk\setminus\lam^*$.  A gap is {\em critical} iff two points in its boundary map to the same point. A gap with finitely many leaves in its boundary is usually called a {\em polygon}.  The leaves bounding a finite gap are called the {\em sides} of the polygon.  
\end{defn}

\begin{defn}[Fatou gap]
A {\em Fatou gap} in a lamination is a gap whose boundary intersected with $\mathbb{S}$ contains a Cantor set. 
\end{defn}

We can see Fatou gaps in a lamination in Figure \ref{CompRabbit}. The white spaces in the lamination are Fatou gaps since as we pull back our polygons farther, we get a gap whose boundary meets $\mathbb{S}$ in a Cantor set. 

\begin{rem}[Degree of a Fatou Gap]\label{degree fatou}
    The degree of a Fatou gap is the amount of criticality associated with the gap. For example, one critical chord in the gap gives a degree two Fatou gap which means the gap will map forward two-to-one.
\end{rem}

A Fatou gap in a lamination (that corresponds to a Julia set) maps to the closure of a bounded Fatou domain in dynamical space. Comparing the Rabbit lamination to the Rabbit Julia set in Figure \ref{CompRabbit}, one can see the correspondence.

\begin{defn}[All Critical Polygon]
A polygon whose endpoints all map to a single point is called an {\em all critical polygon}.
See Figure \ref{SibLam}.
\end{defn}

\begin{prop}[Order Preserving \cite{mimbs:2013}]
On both finite and infinite gaps of a $d$-invariant lamination, the map $\sigma_d$ preserves circular order.
\end{prop}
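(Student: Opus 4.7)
The plan is to argue by contradiction using the no-crossing property of leaves. Suppose $G$ is a gap of $\lam$ and that three points $a, b, c \in \partial G \cap \ucirc$ appear in positive circular order on $\ucirc$, but their images under $\sigma_d$ are in strictly non-positive circular order. The aim is to exhibit a pair of leaves of $\lam$ that cross in the open disk, directly contradicting condition (1) of the definition of a lamination.

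For a finite gap $G$, a convex polygon with vertices $v_1, \ldots, v_n$ listed in positive circular order, write $u_i = \sigma_d(v_i)$. Each side $\overline{v_i v_{i+1}}$ is a leaf of $\lam$, so by forward invariance each image $\overline{u_i u_{i+1}}$ is a (possibly degenerate) leaf of $\lam$. If the cyclic arrangement of the $u_i$ on $\ucirc$ is neither the positive order nor the full reversal of it, a short combinatorial check shows that some pair of non-adjacent sides of $G$ has images whose endpoints interlink on $\ucirc$ and therefore cross in the open disk, immediately contradicting condition (1). The remaining case, the \emph{full reversal} (which is the only non-positive cyclic arrangement possible for a triangular gap), requires sibling invariance, because the images of the sides taken alone form a polygon without self-crossings. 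Here I would pick a non-degenerate side $\ell_1$ of $G$ with image $\ell' = \sigma_d(\ell_1)$, extend $\ell_1$ to a full sibling collection $\{\ell_1, \ldots, \ell_d\}$ covering $\ell'$, and argue that the requirement that the siblings be pairwise disjoint and disjoint from $\ell_1$ forces one of them to have endpoints on opposite sides of another edge of $G$, hence to cross that edge --- again contradicting condition (1). The key combinatorial input is that, when the circular order is reversed, the arcs of $\ucirc$ between consecutive vertices of $G$ force preimages of the endpoints of $\ell'$ to be distributed so that no choice of siblings is both internally non-crossing and compatible with the remaining sides of $G$.

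For an infinite (Fatou) gap $G$, the boundary is a topological circle built from countably many boundary leaves of $\lam$ together with a Cantor subset of $\ucirc$. I would reduce to the finite case by approximation: any triple of boundary points of $G$ is a limit of triples of endpoints of boundary leaves, which in turn are sides of finite gaps of $\lam$ adjacent to $G$. Because $\lam^*$ is closed and $\sigma_d$ is continuous, crossing and non-crossing are stable under such limits, so circular order preservation on $\partial G$ reduces to the finite case.

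The main obstacle is the full-reversal subcase. The easy ``interlinking non-adjacent sides'' argument fails there, since the image sides form a polygon traversed in the wrong orientation but without any self-crossings, so the contradiction has to come from the sibling leaves rather than from the images of the sides of $G$ themselves. Making the sibling-based crossing argument uniform in $d$ and in the number of sides of $G$, and in particular handling the triangular case cleanly, is the delicate step.
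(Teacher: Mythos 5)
First, note that the paper does not prove this proposition at all: it is quoted with a citation to \cite{mimbs:2013}, where it is part of the theorem that sibling invariance implies gap invariance (the boundary map of a gap is an orientation-preserving, positively oriented composition of a monotone map and a covering map). So your proposal has to be measured against that cited argument, and there it falls short at exactly the point you flag as delicate. Your treatment of the ``scrambled but not reversed'' case is fine (a closed inscribed polygon is non-self-crossing only if its vertices are traversed in circular order, up to orientation, so forward invariance plus condition (1) kills those cases), but the full-reversal case is the actual content of the proposition, and there you only assert the conclusion. The specific mechanism you propose --- take one non-degenerate side $\ell_1$, one full sibling collection for it, and claim disjointness forces a sibling to cross another edge of $G$ --- is not justified and is not obviously true for $d\ge 3$: sibling collections are far from unique (the preimages of the two endpoints of $\ell'$ alternate around $\ucirc$ and admit many non-crossing matchings), and the crossing you need may only be forced when siblings of \emph{several} edges are considered simultaneously, or may require the kind of ``compatible sibling collection'' lemmas that occupy several pages in \cite{mimbs:2013}. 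Even in the easiest case $d=2$, where the sibling of each edge is its half-rotation, the contradiction comes from comparing the rotated copies of \emph{all} the edges with the original polygon (a reversal forces the polygon to meet its half-rotation), not from the sibling collection of a single preselected side. So the heart of the proof is missing, not merely ``delicate.''

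The infinite-gap step also has a genuine flaw: you reduce to the finite case by saying endpoints of boundary leaves of $G$ ``are sides of finite gaps of $\lam$ adjacent to $G$.'' That is false in general --- on the other side of a boundary leaf of a Fatou gap there may be another infinite gap or a limit of leaves, with no finite gap present. What the approximation by endpoints of boundary leaves does give you is a reversed triple of endpoints of edges of $G$ itself (reversal of a triple with distinct images is an open condition, and $\lam^*$ is closed), but those edges are not the sides of a finite polygon, so the finite-gap argument does not apply to them verbatim; you would have to rerun the crossing/sibling argument directly on the boundary edges of $G$, or argue as in \cite{mimbs:2013} via the monotone-plus-covering structure of the boundary map. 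Finally, the degenerate situations (critical edges collapsing to points, distinct vertices with equal images) are waved at but would need to be folded into the combinatorial check for the argument to be complete.
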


\begin{prop}[How Fatou Gaps Meet] \label{prop: gap meet}
Fatou gaps in a lamination can meet in two ways. Either they meet at a leaf shared in both of their boundaries, or a leaf in one gap meets another leaf in the other gap at a single point. 
\end{prop}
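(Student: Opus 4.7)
The plan is to split by the nature of the intersection. Since distinct gaps are closures of disjoint components of $\cl\disk\sm\lam^*$, their interiors are disjoint, and hence any point of $G_1\cap G_2$ lies in $\lam^*=\ucirc\cup\bigcup\lam$. So every shared point is either on $\ucirc$ (possibly at a leaf endpoint) or in the relative interior of a leaf, and I will analyze these possibilities separately.

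First I would treat the case that $G_1\cap G_2$ contains a point $p$ in the relative interior of some leaf $\ell$. By condition (1) of the lamination definition, $\ell$ is the only leaf of $\lam$ through $p$. The leaf $\ell$ separates $\cl\disk$ into two half-disks, and since the two gaps have disjoint interiors, each having $p$ in its closure forces them to sit on opposite sides of $\ell$ near $p$ (nothing else touches the relative interior of $\ell$, so no other leaf can separate them there). By connectedness of each gap and the observation that on each side of $\ell$ only one gap can have $\ell$ on its boundary, the entire leaf $\ell$ lies in $\partial G_1\cap\partial G_2$. This is the first conclusion: a leaf shared by both boundaries.

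Otherwise $G_1\cap G_2\subset\ucirc$. Fix $p\in G_1\cap G_2$. Since $G_1\ne G_2$ and both have $p$ on their boundary, neither gap can fill a half-disk neighborhood of $p$ in $\cl\disk$, so there must be a leaf $\ell_1\in\partial G_1$ and a leaf $\ell_2\in\partial G_2$, each with $p$ as an endpoint, bounding the respective gaps locally at $p$. Since we are not in the first case, $\ell_1\ne\ell_2$, and by condition (1) they can only meet at the common endpoint $p\in\ucirc$. This is exactly the second conclusion.

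The step I expect to be the main obstacle is the final assertion of the second case, that the gaps truly meet in only a single point. A second shared boundary point $q\in\ucirc$ would by the same argument produce another pair of meeting leaves, and one then has to use the order-preservation property of $\si_d$ from the preceding proposition, the non-crossing condition (1), and the Cantor structure of Fatou-gap boundaries to show that such a configuration either duplicates a shared leaf (reducing to the first case) or forces a crossing that violates (1). Making this case analysis airtight is where most of the care has to go.
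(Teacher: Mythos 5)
Your case~1 (a shared point interior to a leaf forces the whole leaf into both boundaries) is fine and close in spirit to what is needed. The genuine gap is exactly the step you flag at the end: you never prove that, absent a shared leaf, the gaps meet in only one point, and the tools you propose for it --- order-preservation of $\si_d$ and the Cantor structure of Fatou gap boundaries --- are the wrong ones. The statement is purely geometric and has nothing to do with the dynamics; it holds for arbitrary gaps. The missing idea, which is the entire content of the paper's proof, is convexity: a gap is the convex hull of its intersection with $\ucirc$, and two distinct gaps have disjoint interiors, so neither convex hull can contain or cross the other. If $G_1$ and $G_2$ shared two points $p\neq q$, convexity would put the chord $\cl{pq}$ inside both gaps, and disjointness of interiors would force $\cl{pq}$ into $\partial G_1\cap\partial G_2$; since any non-circle point of $\partial G_i$ lies on a leaf and leaves meet only on $\ucirc$, that boundary chord is a leaf, so you are back in the shared-leaf case. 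Thus if no leaf is shared, the intersection is a single point, necessarily an endpoint of a boundary leaf of each gap --- the second alternative. No analysis of iterated images is needed or helpful.

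The same convexity observation also patches a smaller soft spot in your case~2: from ``neither gap fills a half-disk neighborhood of $p$'' it does not immediately follow that each gap has a boundary leaf \emph{ending} at $p$; a priori the boundary leaves of $G_1$ could merely accumulate at $p$. But if $p$ were a two-sided limit point of $G_1\cap\ucirc$, the convex hull $G_1$ would occupy a full wedge at $p$ on both sides, leaving no room for a second gap to reach $p$ without the hulls overlapping in their interiors; hence $p$ is isolated from the relevant side in $G_1\cap\ucirc$ and the boundary chord from $p$ to the adjacent point of $G_1\cap\ucirc$ is the required leaf (and likewise for $G_2$). In short, replace the dynamical machinery by the single fact that gaps are non-crossing convex hulls of closed subsets of $\ucirc$, and both the single-point claim and the existence of the two meeting leaves fall out; that is precisely the route the paper takes.
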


\begin{proof}
Suppose we have two gaps $G_1$ and $G_2$. The boundary of a gap is the convex hull of where it meets the circle. Note the convex hull of a gap cannot contain, or cross, another convex hull. This means that two gaps can share a piece of their convex hull such as a leaf, or a point joined by two leaves. Note that two gaps cannot meet at a point that is not an endpoint of a leaf for then the two convex hulls would intersect in their interior.
\end{proof}

\begin{rem}
Finite gaps do not meet since each gap is its own equivalence class.
\end{rem}

\begin{figure}
    \centering
    \includegraphics[width = 2in]{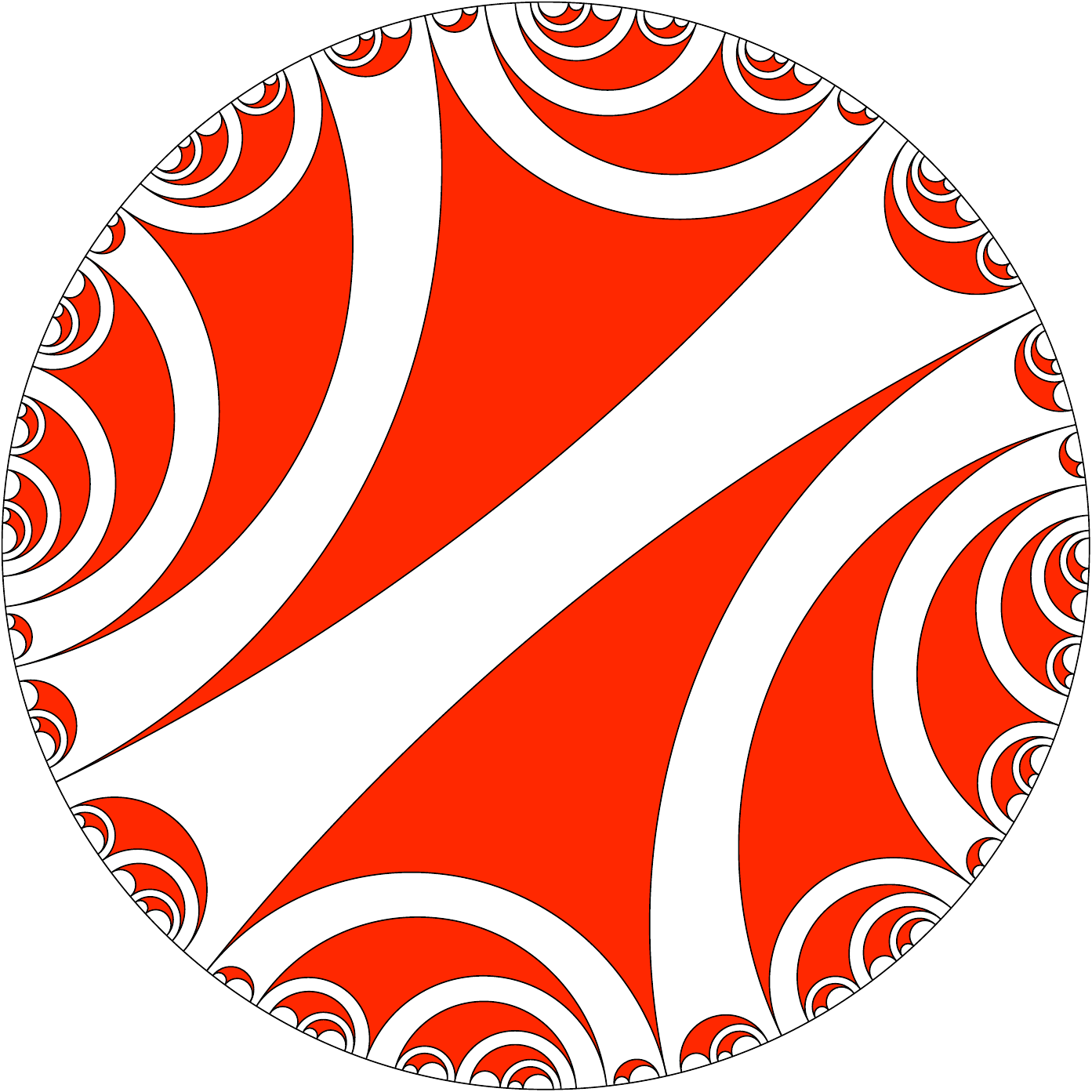}
    \hspace{.5in}
    \includegraphics[width = 2in]{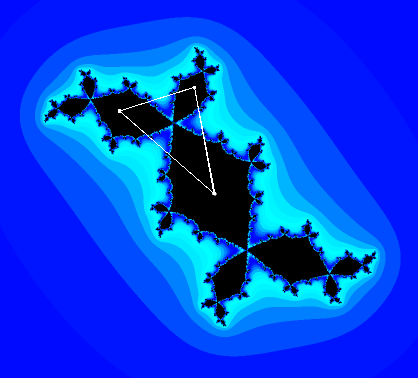}
    \caption[Rabbit Lamination and Julia Set]{Here are corresponding quadratic lamination (left) and Julia set (right) named the ``Douady Rabbit". There is a quotient map from the lamination to the Julia set. Under this quotient map the white gaps correlate with the black bounded domains of the Fatou set. The triangles map to ``pinch points" between the bounded Fatou domains.}
    \label{CompRabbit}
\end{figure}

\subsection{Pullback Laminations} \label{sec: pullback}

\begin{defn}[Critical Portrait]\label{def: Critical portriat}
A maximal collection of critical chords for $\sigma_d$ is called a {\em critical portrait} (maximal here meaning at least $d-1$ non-crossing critical chords). When a collection of critical chords meet at endpoints, the maximum may exceed $d-1$ by forming all critical polygons.
\end{defn}

\begin{defn}[Critical Sectors]

A critical sector is a region $C$ in the closed unit disk that is bounded by critical chords and arcs of the circle such that the boundary of $C$ maps onto the circle with degree 1.
\end{defn}

\begin{defn}[Forward Invariant Set]
A {\em forward invariant set} consists of periodic leaves or polygons that map forward preserving circular order without intersecting.
\end{defn}

\begin{defn}[Compatible]
A  critical portrait, $C$, is {\em compatible} with a forward invariant set, $F$, when $F$ meets $\cup C$ only at endpoints of leaves or vertices of polygons.
\end{defn}

\begin{defn}[Branches of the Inverse]\label{def: branches of the inverse}
Let $C$ be a critical portrait. Every critical sector $S$ in $\cl\disk$ defined by $C$ will have a function $\tau: \partial \disk \to \partial S$ that is one to one, and $\sigma_d \circ \tau$ is the identity on $\partial \disk$. 
\end{defn}

Combining the previous definitions we can now define a pullback scheme.

\begin{defn}[Pullback Scheme]
Let $C$ be a critical portrait and $F$ a compatible forward invariant set. The corresponding collection $PB(F,C)$ of the branches of the inverse determined by $C$ as in Definition \ref{def: branches of the inverse} gives us a {\em pullback scheme} for $F$.
\end{defn}

Using Lamination Builder \cite{Falcione:2019}, we can visualize multiple steps of the pullback scheme. 

\begin{figure}
    \centering
    \includegraphics[width = 2in]{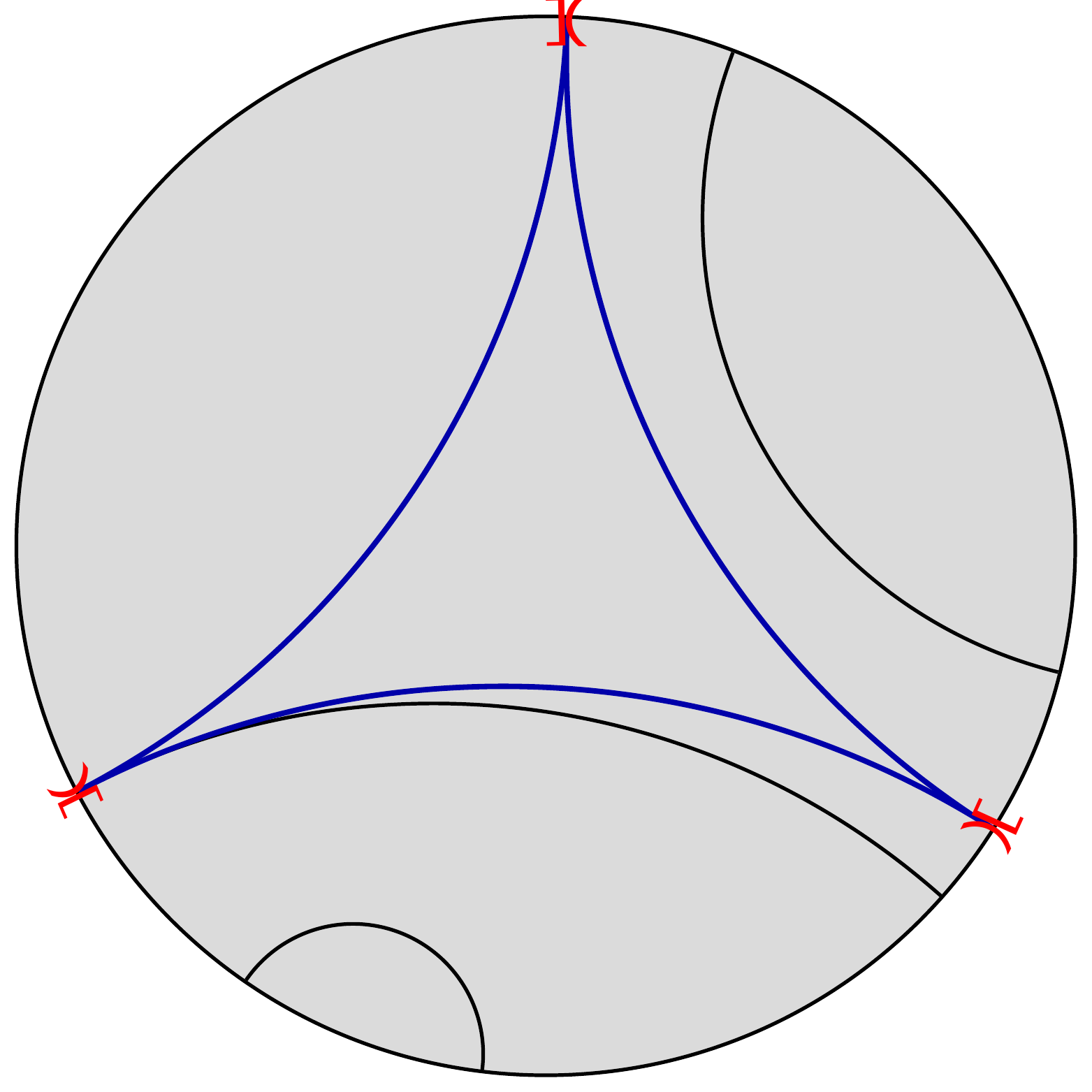}
    \hspace{.3in}
    \includegraphics[width = 2in]{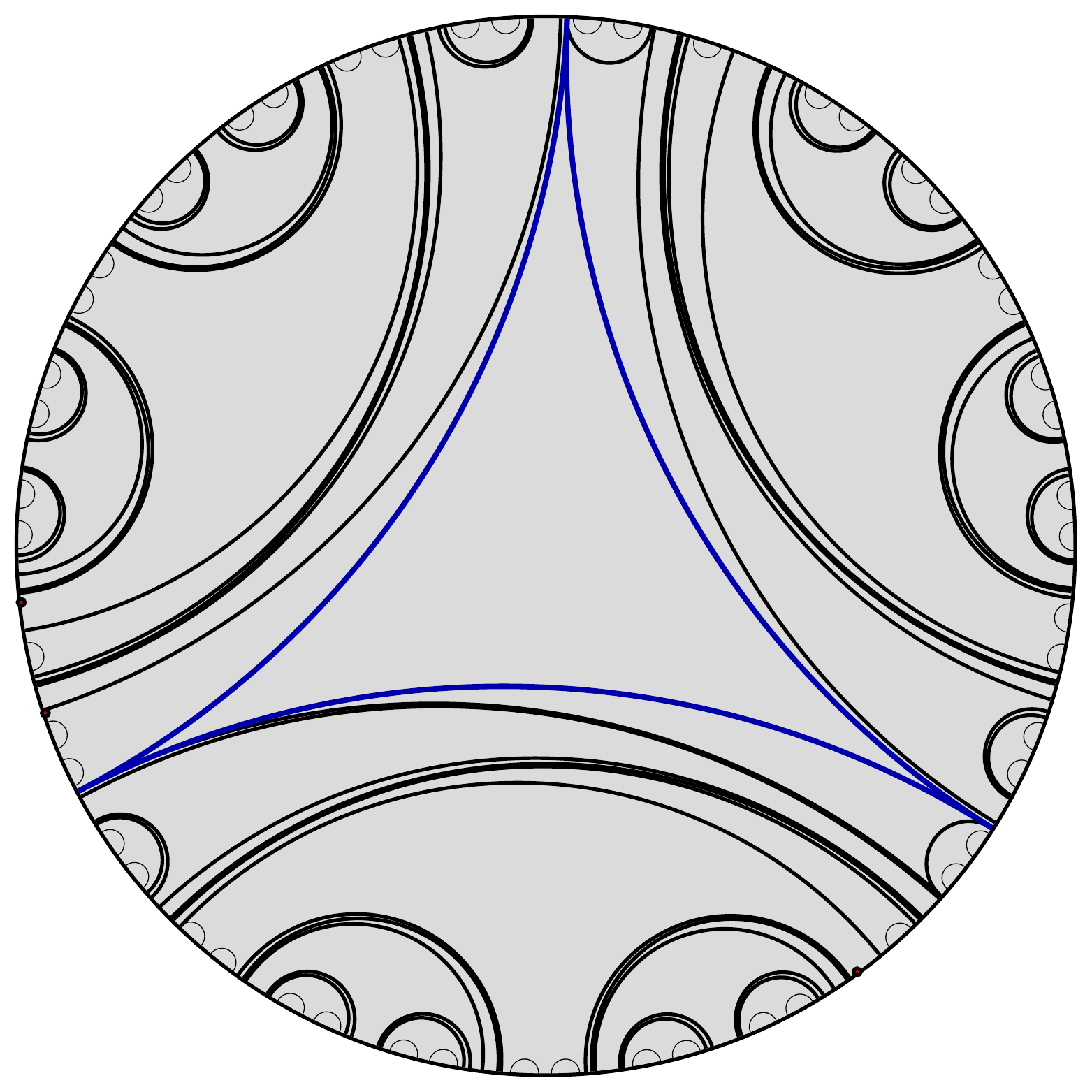}
    \caption[Branches of the Inverse]{(Left) A pullback scheme with an all critical triangle and included branches of the inverse marked by closed ends of half open intervals on the circle. (Right) The resulting pullback with the indicated pullback scheme.}
    \label{fig:Pullback Scheme}
\end{figure}

\begin{rem}
There can be multiple critical portraits compatible with $F$ that define the pullback scheme differently. There are some critical portraits that are compatible with $F$, but the related pullback lamination does not satisfy condition (4) of Definition \ref{def: Sibling invaritant lam}. The following theorem is well known.
\end{rem}

\begin{thm}
Let $F$ be a periodic forward invariant set under $\sigma_d$, $C$ a compatible critical portrait, and the pullback scheme $PB(F,C) = \{\tau_1, \tau_2, \dots, \tau_d\}$. Let $F_0 = F,$ and $F_1 = F_0 \cup \tau_1(F_0) \cup \tau_2(F_0) \cup \dots \cup \tau_d(F_0)$. In general, for any given stage $n$ of the pull back $F_n = F_{n-1} \cup \tau_1(F_{n-1}) \cup \dots \cup \tau_d(F_{n-1})$. Let $F_\infty  = \bigcup^\infty_{n=0} F_n$, and let $ \mathcal{L} = \cl{F_\infty}$. Then, $\mathcal{L}$ is a sibling $d$-invariant lamination.
\end{thm}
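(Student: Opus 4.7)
The plan is to verify the four defining conditions of a sibling $d$-invariant lamination for $\mathcal{L}$, with most of the real work concentrated in a single inductive statement about the finite stages $F_n$.

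The core inductive claim I would prove is that for every $n \geq 0$, $F_n$ is a pre-lamination that meets $\bigcup C$ only at endpoints of its leaves or vertices of its polygons, i.e., $F_n$ is itself compatible with $C$. The base case is the hypothesis on $F_0 = F$. For the inductive step, I would use that each branch $\tau_i$ is a circular-order-preserving bijection of $\ucirc$ onto $\partial S_i \cap \ucirc$; hence $\tau_i(F_{n-1})$ consists of non-crossing chords lying entirely inside the closed critical sector $\cl{S_i}$, and it meets the critical chords bounding $S_i$ only at images of endpoints of $F_{n-1}$-leaves. Two families $\tau_i(F_{n-1})$ and $\tau_j(F_{n-1})$ in different sectors can meet only on their shared critical chord, and the inductive compatibility forces them to meet there only at endpoints. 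This simultaneously yields both the non-crossing property and continued compatibility for $F_n$. Passing to the closure $\mathcal{L} = \cl{F_\infty}$ preserves non-crossing, since a transverse crossing of two limit leaves would already occur between nearby leaves at some finite stage, and $\mathcal{L}^*$ is closed by definition.

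Forward invariance is then immediate: a leaf in $F_n \setminus F_{n-1}$ has the form $\tau_i(\ell')$ with $\ell' \in F_{n-1}$, so $\sigma_d(\tau_i(\ell')) = \ell' \in \mathcal{L}$, and for limit leaves in $\mathcal{L} \setminus F_\infty$ the conclusion follows from continuity of $\sigma_d$. Backward invariance is equally direct: each $\tau_i(\ell)$ is an explicit preimage of $\ell \in F_\infty$, and limits of preimages give preimages of limit leaves. For sibling invariance, given $\ell_1 \in \mathcal{L}$ with $\sigma_d(\ell_1) = \ell'$ non-degenerate, the leaf $\ell_1$ lies in some closed critical sector $\cl{S_i}$, so $\ell_1 = \tau_i(\ell')$; the candidate full sibling collection is then $\{\tau_1(\ell'), \ldots, \tau_d(\ell')\}$, and pairwise disjointness (modulo endpoint contact along shared critical chords) follows from the fact that the $d$ critical sectors have pairwise disjoint interiors.

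The main obstacle is the inductive non-crossing and compatibility claim. Everything else is essentially mechanical once one knows how the $\tau_i$ act on chords, but running the induction requires careful treatment of leaves of $F_{n-1}$ whose endpoints already lie on critical chords of $C$; compatibility is precisely what guarantees that their pullbacks in adjacent sectors meet cleanly at shared critical endpoints rather than producing transverse crossings. Once this induction is in hand, verifying the three conditions of Definition \ref{def: Sibling invaritant lam} is a short unwinding of definitions.
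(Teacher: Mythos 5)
Your proposal is correct and follows essentially the same route as the paper, which gives only a proof sketch: induct over the pullback stages to get forward, backward, and sibling invariance of $F_\infty$, then observe that crossing limit leaves in $\cl{F_\infty}$ would force crossings already present at some finite stage. Your version simply fills in more detail (the compatibility induction and the explicit sibling collection $\{\tau_1(\ell'),\dots,\tau_d(\ell')\}$) than the paper's sketch, which instead closes by remarking that one must choose an appropriate pullback scheme to secure finite equivalence classes.
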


\begin{proof}[Proof Sketch]
Note, $F_0$ is forward invariant, and the $\tau_i$ are homeomorphisms away from critical values. Hence, each $F_i$ is forward invariant, and up to $F_{i-1}$ is backward and sibling invariant. It follows that $F_\infty$ is sibling $d$-invariant but is not closed. Taking the closure $\cl{F_\infty}$ picks up points in the circle and limit leaves. The only potential problem here is crossing limit leaves. But, if limit leaves were to cross when we take the closure, that means there were some crossings in $F_\infty$ prior to the closure. To ensure $\cl{F_\infty}$ has finite equivalence classes, one must choose the appropriate pullback scheme.  
\end{proof}

\begin{defn}[Grand Orbit] \label{def: grand orbit}
Let $P$ be an arbitrary subset of a lamination (for example a point, leaf, polygon, gap, etc.) in a $d$-invariant lamination. The {\em grand orbit} $\mathcal{GO}(P)$ is defined to be 
$$\mathcal{GO}(P) = \{ \sigma_d^{-n}(\sigma_d^k(P)) | n,k \in \mathbb{Z}^+ \cup {0} \}.$$
\end{defn}

 \subsection{Periodic Polygons} 
Periodic polygons (and leaves) in a lamination can be rotational, rotation return, or identity return. We define each type below.

\begin{figure}
    \centering
    \includegraphics[width = 1.9in]{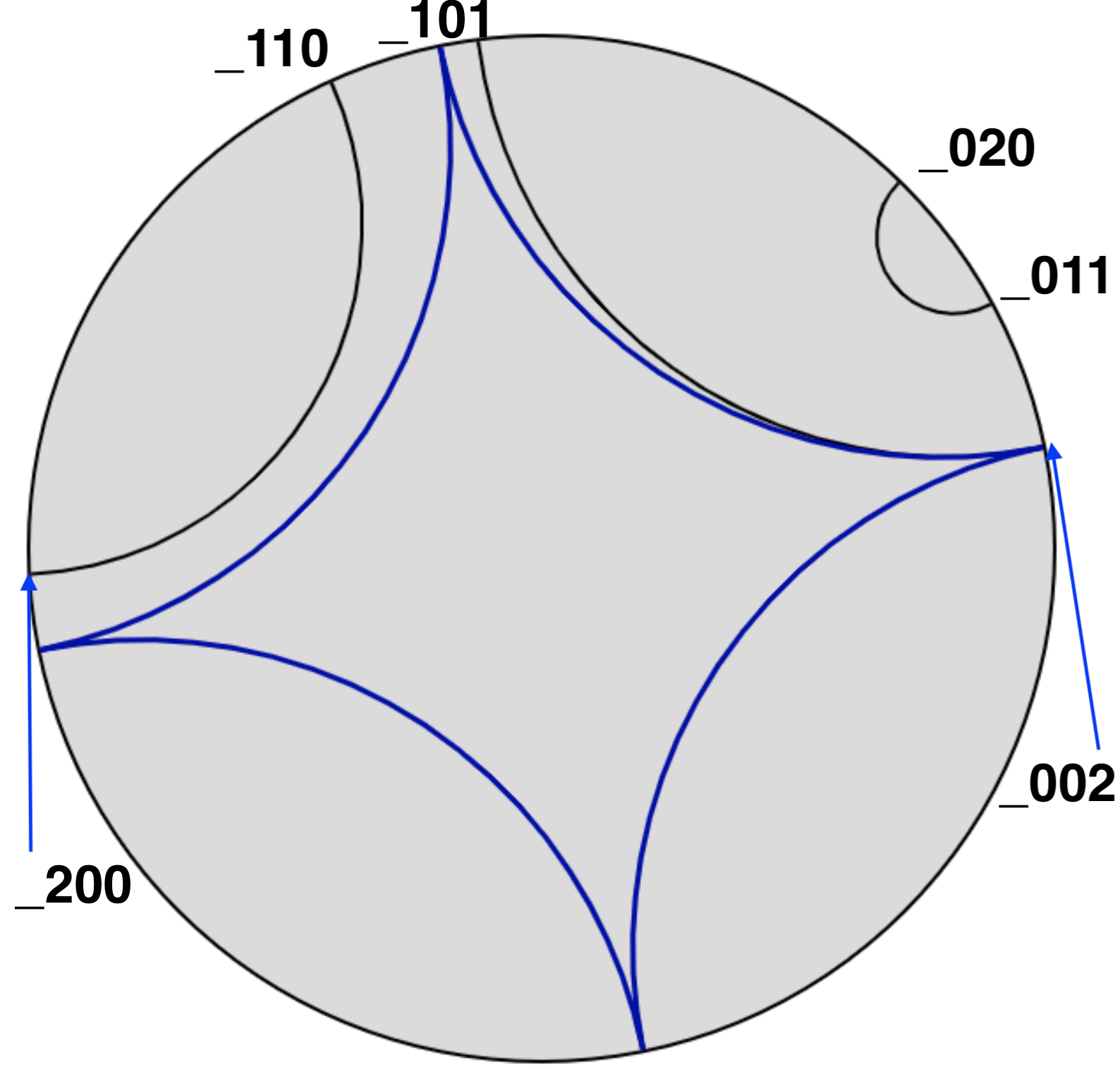}
    \includegraphics[width = 1.9in]{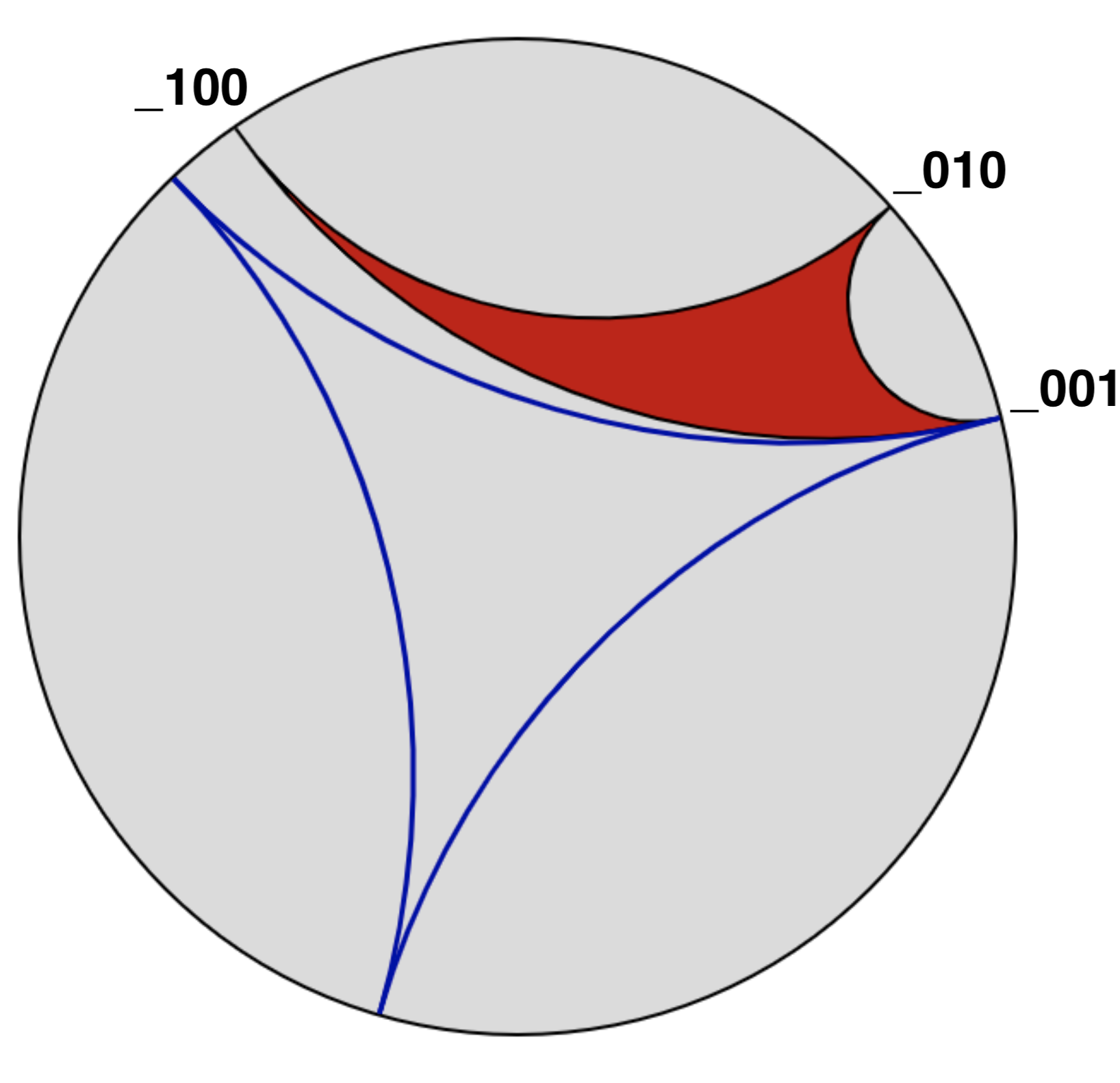}
     \caption{Identity return leaf in $\sigma_4$ (left), rotational polygon in $\sigma_3$ (right)}
    \label{fig:3 cases}
\end{figure}

 \subsubsection{Rotational Polygons}
 
 \begin{defn}[Rotational Set]
Consider $\sigma_n$: $\mathbb{S} \rightarrow \mathbb{S}$ for a particular $n \geq 2.$

Let $P = \{ x_i | 0 < x_1 < x_2 < x_3 < ... < x_k < 1 \}$ be a finite set in consecutive order in $\mathbb{S}$. We say that $P$ is a {\em rotational set} (for $\sigma_n$) iff \begin{enumerate}
    \item $\sigma_n(P)=P$, and
    \item For $1 \leq j \leq k$, if $\sigma_n(x_j) = x_i$, set $i(j)=i$. Then for all $j$, $j-i(j) (\text{mod }k)$ is the same. 
\end{enumerate}
If (2) holds (but possibly not (1)), we say $\sigma_n$ is circular order-preserving on $P$. We call a rotational set which is a single periodic orbit, a {\em rotational orbit}. 
\end{defn} 
 
 \begin{defn}[Rotation Number]
To each rotational set we can assign a {\em rotation number}, a rational number $0 \leq \dfrac{p}{q} < 1$ in lowest terms.  

Let $\mathcal{O} = \{x_1 < x_2 < x_3...<x_q \}$ be a rotational periodic orbit. Suppose that $\sigma_n(x_1)=x_j$. Set $p=j-1$. The rotation number of $\mathcal{O}$ is $\dfrac{p}{q}$. Our notation is $\rho(\mathcal{O}) = \rho(x_1)=\rho(x_i) = \dfrac{p}{q}$. 
\end{defn}

 \begin{defn}[Rotational Polygon] \label{rotational poly}
A polygon (or a leaf) $P=P_0$ is said to be {\em rotational} iff $\sigma_d|_{P_0}$ maps $P_0$ back to itself preserving circular order and $\forall i \neq j$,  $\sigma_d(p_i) = p_j$ according to some nonzero rotation number. 

\end{defn}
 
  \begin{defn}[Rotation Return Polygon]
 A polygon is said to be {\em rotation return} iff it is a polygon that maps off of itself and when it returns it has a nonzero rotation number. 
 \end{defn}

 \subsubsection{Identity Return Polygons}
  \begin{defn}[Identity Return]\label{identity return poly}
A polygon (or a leaf) $P=P_0$ is said to be {\em identity return} iff its {\em orbit}
$$\{P_0, P_1=\sigma_d(P_0), P_2=\sigma_d(P_1), P_3,\dots, P_n=P_0 \}$$ is periodic (of least period $n$) and has
the properties 
\begin{enumerate} \item  the polygons in the orbit are pairwise disjoint,  
\item $\sigma_d^{n}|_{P_0}$ is the identity (rotation number = 0), and
\item $P_i$ maps to $P_{i+1 \pmod n}$  preserving circular order.
\end{enumerate}
\end{defn}

Each vertex (and each side of $P$) is in a different orbit of period $n$ since the polygons in the orbit are disjoint. Restating Kiwi's theorem \cite{Kiwi:2001} for polygons in a lamination, we have:

If a periodic polygon $P$ in a $d$-invariant lamination has exactly $k$ distinct critical values associated with it, then the number of orbits of sides of $P$ is at most $ k$ + 1. Moreover, if the number of orbits of sides is $k + 1$, then $P$ is identity return.

Thus, $d$ is the maximal number of sides for an identity return polygon in a $d$-invariant lamination.

In the special case that the polygon has period $n=1$, we give the following definition:

\begin{defn}[Fixed Polygon]
A polygon (or a leaf) $P$ is said to be {\em fixed} iff $\sigma_d(P)=P$ and $\sigma_d(x_i)=x_i$ where $x_1, x_2, ..., x_i$ are the vertices of $P$ (endpoints of leaves of $P$).

\end{defn}

However, we only discuss correspondences with identity return and rotational polygons here.
\subsection{Leaf Length}

The following definitions are adapted from Cosper et al. \cite{Cosper:2016}.
\begin{defn}[Parameterizing the Circle]
The positive order on the circle $\ucirc$ will be fixed in the counterclockwise direction. Let $|(a,b)|$ be the length in the parameterization of the arc $(a,b)$ in $\ucirc$ from $a$ to $b$ counterclockwise. Given a chord $\cl{ab}$ there are two arcs of $\ucirc$ it subtends. Define the length $|\cl{ab}|$ of $\cl{ab}$ to be the shorter of the two arcs subtended. Note, the maximum length for a leaf is $\frac{1}{2}$.
\end{defn}

\begin{defn}[Leaf Length Function]
$$|\sigma_d(\cl{ab})| = \left\{ \begin{array}{cc}
     & d|\cl{ab}| \pmod1, \ \mathrm{if}\  d|\cl{ab}| \pmod1 \le \frac{1}{2} \\
     & 1 - d|\cl{ab}| \pmod1, \ \mathrm{if}\  d|\cl{ab}| \pmod1 \ge \frac{1}{2}
\end{array}\right.$$
\end{defn}

\begin{figure}
    \centering
    \includegraphics[width = 4in]{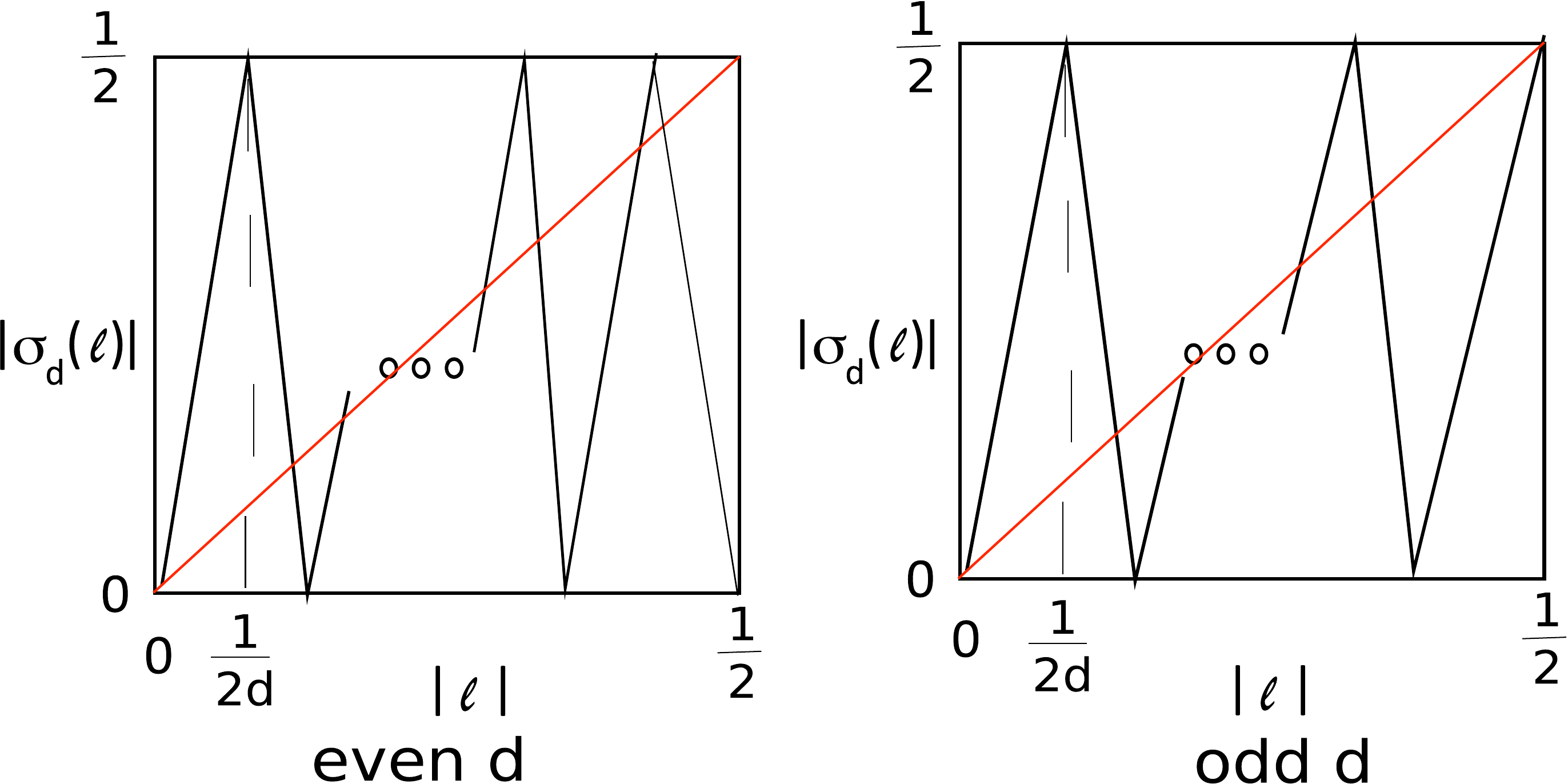}
    \caption[Leaf Length Graph]{Here is the leaf length graph for even and odd $d$ under $\sigma_d$ from Cosper et al. \cite{Cosper:2016}.}
    \label{fig: Leaf Length}
\end{figure}

\begin{prop}[Growing Leaves]\label{prop: leaf growth}
A leaf, $\ell$, of length $|\ell| < \frac{1}{d+1}$ will increase in length under $\sigma_d$ until $|\sigma_d^k(\ell)| \ge \frac{1}{d+1}$.
\end{prop}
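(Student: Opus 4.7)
The plan is to reduce the statement to a short case analysis on the leaf-length formula, using the fact that $|\ell| < \frac{1}{d+1} < \frac{1}{d}$ makes the $\pmod 1$ operation trivial. Specifically, under this hypothesis $d|\ell| < \frac{d}{d+1} < 1$, so
\begin{equation*}
|\sigma_d(\ell)| = \min\bigl(d|\ell|,\, 1 - d|\ell|\bigr),
\end{equation*}
and the analysis splits depending on whether $d|\ell| \le \tfrac12$ or $d|\ell| > \tfrac12$.

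First, I would handle the monotonicity step. If $d|\ell| \le \tfrac12$, then $|\sigma_d(\ell)| = d|\ell| \ge 2|\ell| > |\ell|$, since $d \ge 2$. If instead $d|\ell| > \tfrac12$, then $|\sigma_d(\ell)| = 1 - d|\ell|$, and the inequality $1 - d|\ell| > |\ell|$ rearranges to $(d+1)|\ell| < 1$, which is exactly our hypothesis. So in either branch the leaf grows strictly.

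Next I would show that the iteration terminates. The key extra observation is that in the second branch ($d|\ell| > \tfrac12$) we moreover have $|\sigma_d(\ell)| = 1 - d|\ell| > 1 - \frac{d}{d+1} = \frac{1}{d+1}$, so a single application already lands at or above the threshold. Hence as long as we remain strictly below the threshold we are forced into the first branch, where iterates satisfy $|\sigma_d^n(\ell)| = d^n|\ell|$ and grow geometrically. Since leaf lengths are bounded above by $\tfrac12$, this geometric growth must cross $\frac{1}{d+1}$ in finitely many steps, at which point the process halts and the claim is proven.

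The main ``obstacle'' is simply not mis-reading the leaf-length function near the reflection point $\tfrac12$; the proof itself is essentially the observation that $\frac{1}{d+1}$ is the unique fixed point of $x \mapsto 1 - dx$, so leaves shorter than this are pushed away from it under $\sigma_d$ toward larger values, and the $\pmod 1$ branching of the length function cannot occur until we have already exceeded the threshold.
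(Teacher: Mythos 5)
Your proof is correct and matches the approach the paper leaves implicit: the paper states this proposition without a written proof, appealing only to the leaf-length function and its first positive fixed point at $\frac{1}{d+1}$ (see the remark and Figure \ref{fig: Leaf Length}), and your case analysis of $|\sigma_d(\ell)|=\min\bigl(d|\ell|,\,1-d|\ell|\bigr)$ is exactly the algebraic justification of that picture. The only (trivial) point worth noting is that the argument uses $|\ell|>0$, i.e.\ that $\ell$ is a non-degenerate leaf, in the geometric-growth step.
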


\begin{rem}
Note that the first positive fixed point is $\frac{1}{d+1}$ which can be seen in Figure \ref{fig: Leaf Length} where the diagonal intersects the graph for the first time.
\end{rem}

\begin{defn}[Distance Between Leaves] \label{def: leaf distance}
 Take two leaves $\ell_1 = \cl{ab}$ and $\ell_2 = \cl{cd}$ with the order of the points being $a < b < c < d$  in a lamination $\mathcal{L}$. Define the distance between $\ell_1$ and $\ell_2$ to be $$dist(\ell_1, \ell_2) = |(b,c)| + |(d,a)|$$
\end{defn}

\section{Unicritical Laminations}

\begin{defn}[Unicritical Lamination] \label{unicritical}
A $d$-invariant lamination which is compatible with an all critical $d$-gon is called a {\em unicritical lamination}.
\end{defn}

\begin{defn}[Major and Minor Leaves]
For the orbit of a periodic leaf in a unicritical lamination, the leaf, $M$, closest to critical length is called the {\em major}, and the image $\sigma_d(M)$ of the major is called the {\em minor} usually denoted $m$.
\end{defn}

\subsection{Major All Critical (MAC)}

\begin{defn}[Major All Critical (MAC)]\label{def: MAC}
The major leaf in a periodic leaf orbit for $\sigma_d$ which is compatible with an all critical $d$-gon is called a {\em major all critical (MAC)} leaf. This MAC leaf may be a single leaf or a side of a polygon.
\end{defn}

\begin{defn}[Canonical MAC Lamination] \label{def: MAC lam}
Let $M$ be a MAC leaf. Attach a guiding all critical $d$-gon to one of the endpoints of $M$. Pullback $M$ with respect to the guiding all critical $d$-gon as described in Section \ref{sec: pullback}. We call this MAC lamination, $\mathcal{L}(M)$, or just $\mathcal{L}$ when $M$ is understood.
\end{defn}

\begin{rem}[First return of MAC leaf]
If a MAC leaf is just a leaf and not a side of a polygon, the first time the leaf returns to itself is by the identity except when the rotation number is $\dfrac{1}{2}$.

\end{rem}

\subsection{Single Critical Moment (SCM)}

\begin{defn}[Maximal Critical Sector] \label{def: maxsector}
Let $C$ be a critical portrait. If a critical sector of $C$ has in its boundary all critical leaves and a side of each all critical polygon, then we call it a {\em maximal critical sector}. See Figure \ref{fig:maximal critical sector}.
\end{defn}

\begin{figure}
    \centering
    \includegraphics[width = 1.9in]{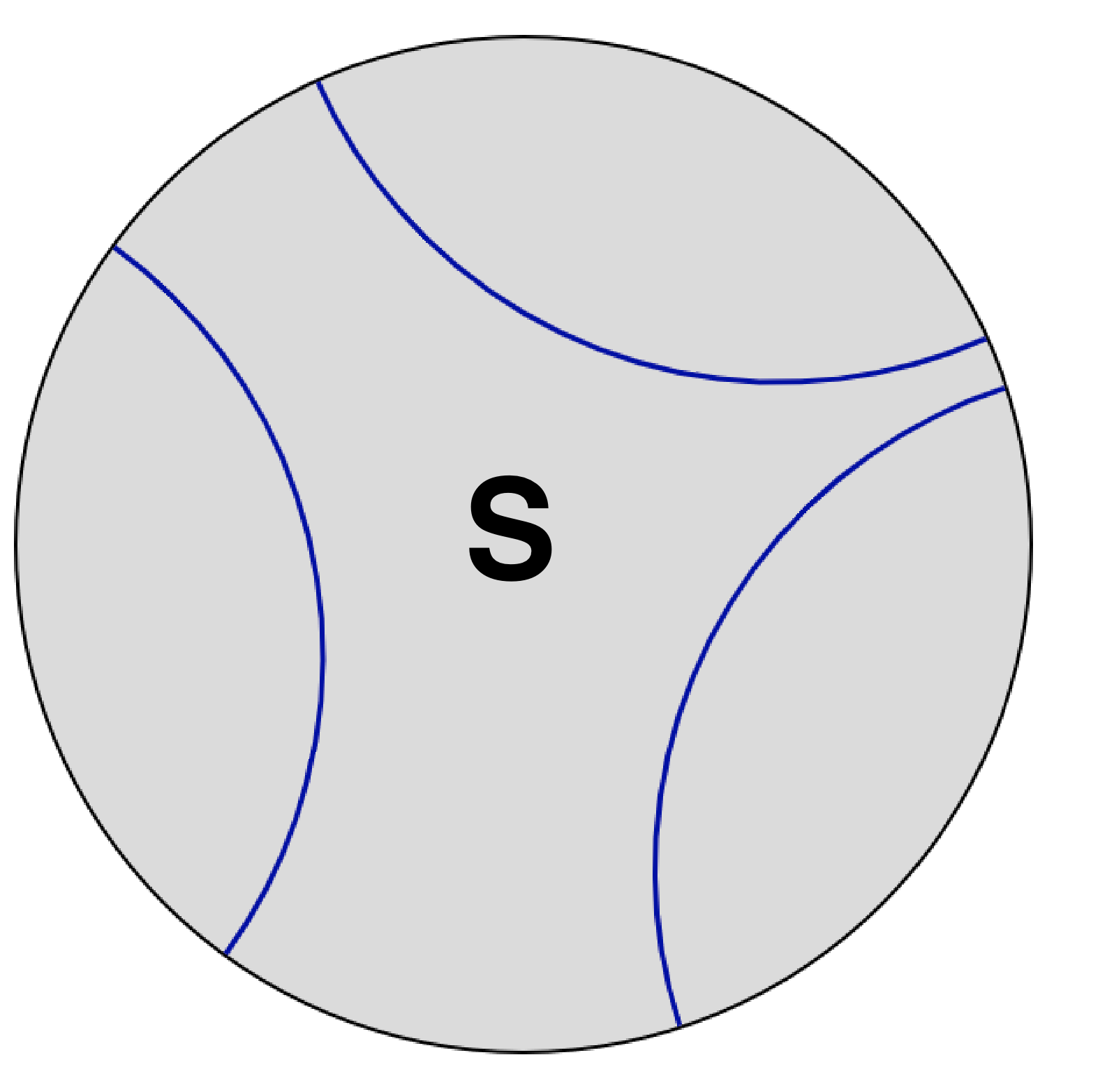}
    \includegraphics[width = 1.9in]{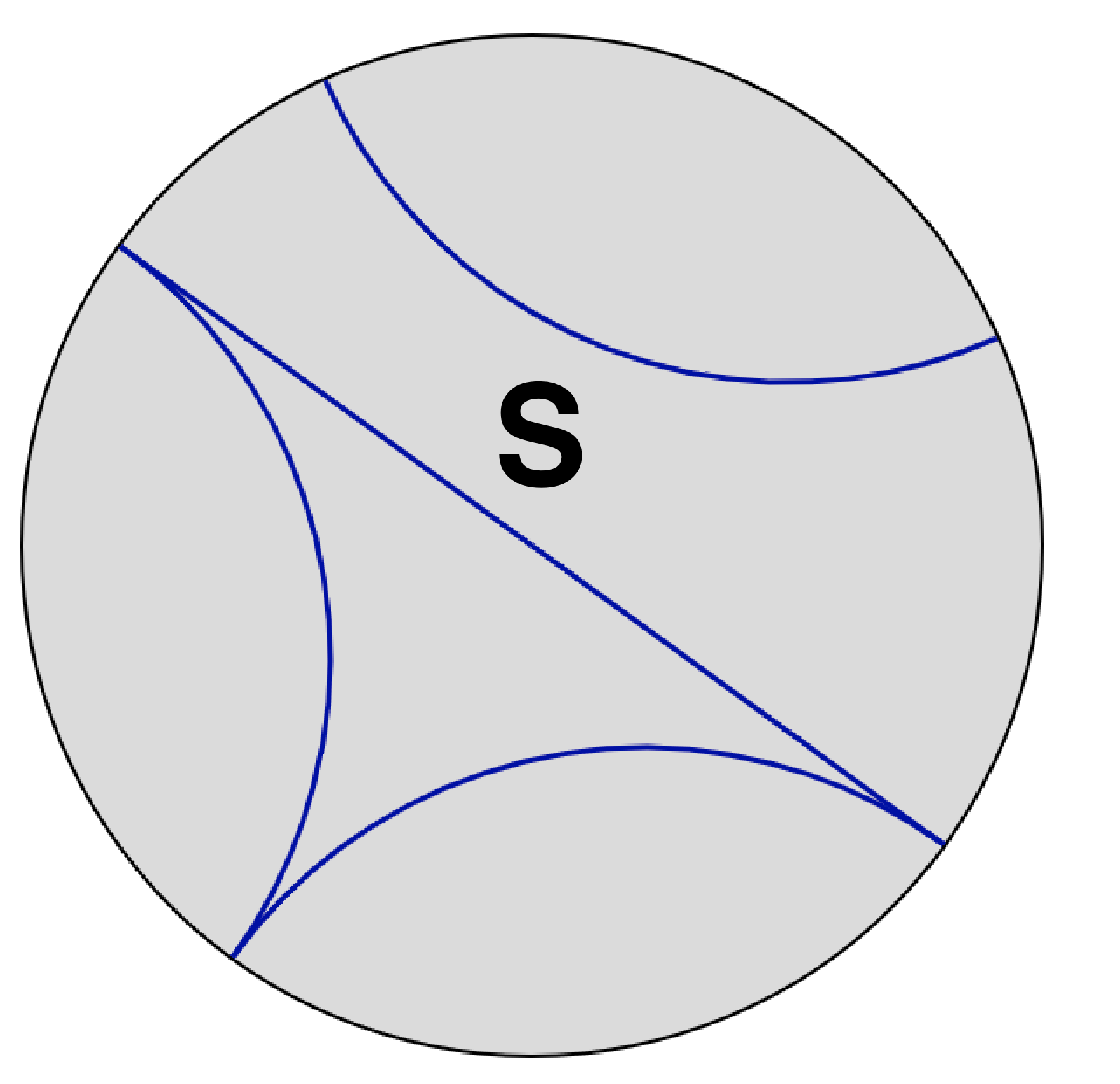}
    \includegraphics[width = 1.9in]{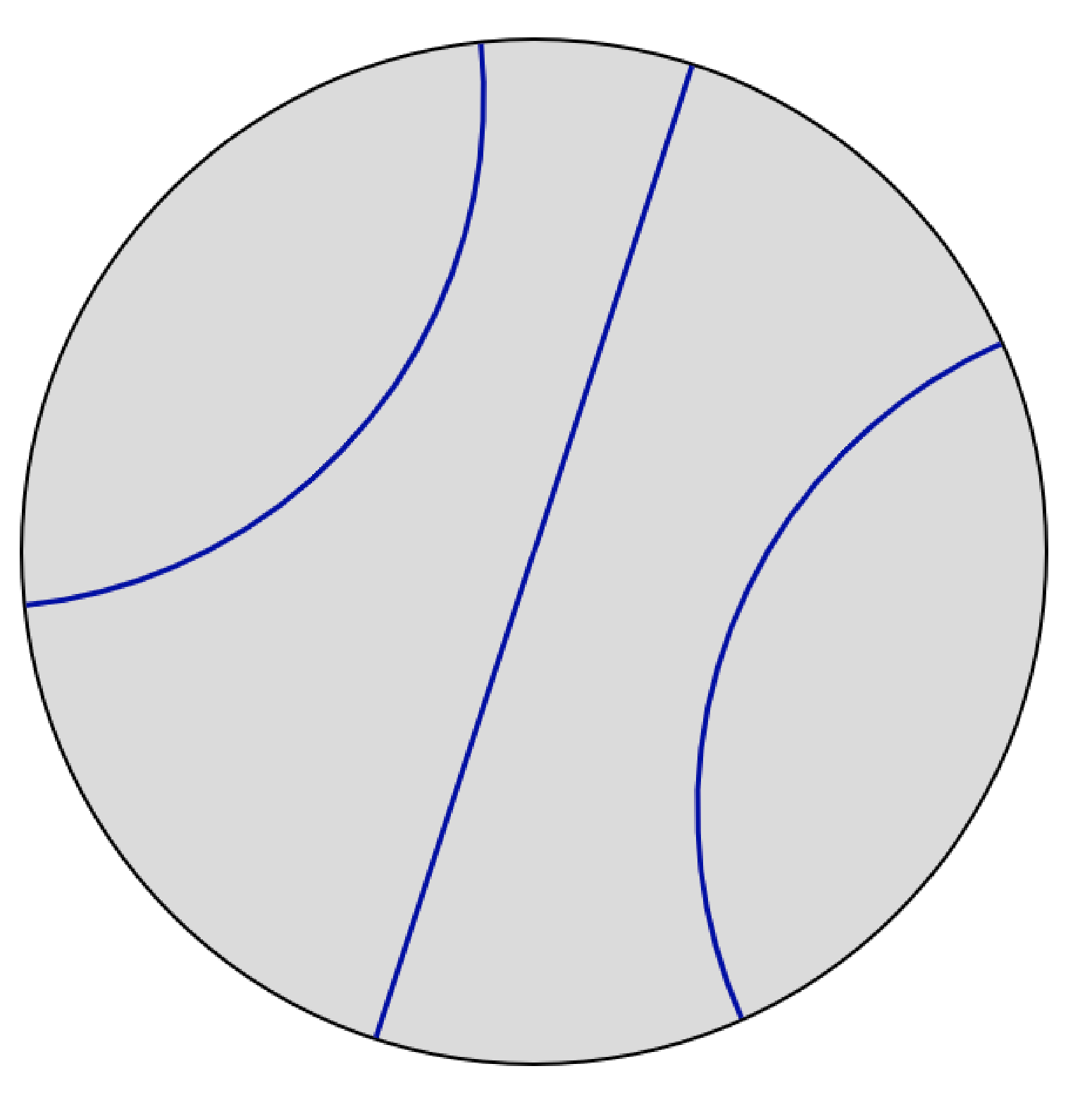}
    \caption{We can see on the left and middle figure that $S$ is a maximal critical sector as it has all critical leaves on its boundary and a side of the all critical polygon (if there is one). On the right, none of the four critical sectors will be maximal as no sector has all critical chords on its boundary.}
    \label{fig:maximal critical sector}
\end{figure}

\begin{defn}[Single Critical Moment (SCM)]\label{def:SCM}
A periodic return polygon, $P$, for $\sigma_d$ is {\em single critical moment (SCM)} if it has the following properties: 
\begin{enumerate}
    \item At one moment in its orbit $P$ must be inside a maximal critical sector.
    \item No side of $P$ makes a closer approach to criticality in a different sector.
\end{enumerate}
\end{defn}

\begin{defn}[Canonical SCM Lamination] \label{def: SCM lam}
Let $P$ be an SCM polygon and $\mathcal{C}$ be a collection of $d-1$ guiding critical chords each touching one endpoint of each of the $d-1$ longest sides of $P$. These longest sides of $P$ are required to be adjacent to each other. Pullback $P$ with respect to $\mathcal{C}$ as described in Section \ref{sec: pullback}. We call this the {\em canonical SCM lamination} for $P$.
\end{defn}

\begin{rem}
In Definition \ref{def: SCM lam}, the existence of the collection $\mathcal{C}$ of guiding critical chords follows immediately from Definition \ref{def:SCM}.
\end{rem}

\begin{defn}[Maximally Critical SCM Polygon] \label{def: max crit poly}
    A {\em maximally critical SCM polygon} is where criticality is broken up as much as possible: all Fatou gaps will be degree two. See Remark \ref{degree fatou}.
\end{defn}

We now consider two cases: polygons that return by the identity and polygons that are rotational. We are interested in finding the correspondence between polygons with SCM and MAC orbits. Examples of MAC and SCM laminations can be seen in Figures \ref{fig:canonical ir SCM} and \ref{fig:canonical rot SCM}.

In Barry's PhD thesis \cite{Barry:2015}, identity return triangles with a single critical moment were shown to stand in one to one correspondence with MAC leaves of a unicritical cubic lamination. Properties of these leaves and triangles were deduced. Our aim is to generalize this correspondence to certain $d$-invariant laminations for $d > 3$. In the identity return and rotational polygon cases, we will identify the SCM polygons that correspond one-to-one to MAC laminations.






There are two ways to define the period of an object: the first return of the vertices of the object vs. the first return of the object. If the polygon is identity return, then these are the same since the object and vertices return back to themselves at the same iterate. However, in the rotational and rotation return cases the period of the object and the period of the vertices differ. In fact, the period of the object is a factor of the period of the vertices. We will denote the period of the vertices as $k$, and the period of the object as $r$. In the rotational case, $r$ is always 1.

The following is the major theorem of this paper.

\begin{thm}[MAC $\Longleftrightarrow$ SCM] \label{thm: Main}
There exists a one to one correspondence between canonical MAC laminations and canonical SCM laminations for $\sigma_d$ as follows:
\begin{enumerate}
    \item MAC identity return leaves correspond to SCM identity return $d$-gons,
    \item MAC rotational polygons correspond to SCM rotational $k(d-1)$-gons,
    
\end{enumerate}
all with $d-1$ consecutive major leaves.
\end{thm}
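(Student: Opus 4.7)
The plan is to construct explicit, mutually inverse maps $\Phi$ and $\Psi$ between canonical MAC initial data (a leaf or polygon $M$ together with an all-critical $d$-gon $T$ touching an endpoint of $M$) and canonical SCM initial data (a polygon $P$ together with $d-1$ guiding critical chords touching endpoints of its $d-1$ consecutive longest sides), and to verify that both pullback schemes generate the same sibling $d$-invariant lamination. The key structural observation is that the all-critical $d$-gon $T$ has $d$ critical sides, and removing any one of them leaves $d-1$ non-crossing critical chords that determine the same $d$ critical sectors under $\sigma_d$ up to merging the omitted cap with the interior of $T$ into a single large sector. This large region is the \emph{maximal critical sector} of Definition \ref{def: maxsector}, and this identification pairs $T$ (minus one chosen side) with the $d-1$ SCM critical chords.

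For $\Phi$ (MAC to SCM), given $(M,T)$, I first form the full sibling collection $\{M_1=M, M_2, \ldots, M_d\}$ of $M$ with respect to $T$, where each $M_i$ touches vertex $v_i$ of $T$. Choosing the distinguished side of $T$ to omit determines $d-1$ consecutive sibling majors, attached to $d-1$ consecutive vertices of $T$; these will play the role of the $d-1$ consecutive major sides of the SCM polygon. In the identity return case (1), I show that these $d-1$ sibling majors, together with one short closing leaf already present in the lamination, bound a polygon $P$ with exactly $d$ sides, and $P$ is itself identity return with the same period as $M$. Kiwi's theorem gives the bound $d$ since $P$ has at most $d-1$ critical values associated with it (one per SCM critical chord). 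In the rotational case (2), $M$ is a side of a rotational MAC $k$-gon, and iterating the sibling-major construction along the $k$-element orbit produces $k(d-1)$ new periodic points that, together with the structure inherited from the critical chords, assemble into a single rotational $k(d-1)$-gon. In both cases I then verify that at the distinguished iterate in its orbit, $P$ sits inside the maximal critical sector with its $d-1$ major sides touching the critical chords, establishing condition (1) of Definition \ref{def:SCM}.

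For $\Psi$ (SCM to MAC), given $(P, \{c_1, \ldots, c_{d-1}\})$, I complete the $d-1$ critical chords into a full all-critical $d$-gon $T$ by adjoining the unique closing critical chord between their extreme endpoints; the closing chord is forced to be critical since the $d$ vertices of $T$ are all preimages of a common point under $\sigma_d$ by compatibility of the $c_i$ with $P$. I take $M$ to be the distinguished major side of $P$ sharing an endpoint with one of the critical chords. To verify that $\Phi$ and $\Psi$ are mutually inverse, I compare the pullback schemes directly: both are compatible with the same forward invariant set (the orbits of $M$ and $P$ are linked by the sibling correspondence), and both critical portraits define the same $d$ critical sectors, so by the pullback theorem of Section \ref{sec: pullback} they generate the same sibling $d$-invariant lamination.

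The main obstacle will be verifying condition (2) of Definition \ref{def:SCM}, the single-critical-moment condition: that no side of $P$ approaches criticality more closely in any sector other than the distinguished one across the full orbit. The key input is that $M$ is defined as the closest-to-critical leaf in its MAC orbit, and this extremal property transfers through the sibling correspondence to the $d-1$ major sides of $P$. Proposition \ref{prop: leaf growth} (leaves shorter than $1/(d+1)$ grow strictly under $\sigma_d$) together with order preservation of $\sigma_d$ on gaps shows that any non-major iterate of a side of $P$ either lies outside the maximal critical sector or sits at bounded distance from criticality, precluding a closer approach. The rotational case presents the additional subtlety of showing that the $k(d-1)$ new points form a single polygon with a coherent rotation number rather than several disjoint polygons; I expect this to follow from the order-preserving action of $\sigma_d$ on the rotational MAC orbit lifting coherently to each level of the sibling pullback, together with the disjointness of $P$ from $T$ forced by SCM-compatibility.
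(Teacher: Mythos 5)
Your construction of $\Phi$ breaks down at its first step: the $d-1$ sibling leaves of $M$ cannot serve as the $d-1$ consecutive major sides of the SCM polygon. Siblings are by definition pairwise disjoint, so they share no vertices and cannot be consecutive sides of any polygon; worse, they are preimages of the minor rather than members of the periodic orbit of $M$, so a polygon assembled from them would not be a \emph{periodic return} polygon and could not satisfy Definition \ref{def:SCM}. There is also no ``short closing leaf already present in the lamination'': the region bounded by $M$ and its siblings is the central Fatou gap, whose boundary consists of preimages of the orbit of $M$, and the new sides of the SCM polygon do not belong to $\mathcal{L}(M)$ at all (this is exactly the content of Theorem \ref{thm: MAC and conc SCM compared}). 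The missing idea is the paper's use of \emph{co-roots}: the $d-2$ periodic points in the endcaps of the central gap, fixed under the first return map, whose existence follows from the conjugacy of the induced return map on the central Fatou gap with $\sigma_d$ (Lemma \ref{Top1} and Theorem \ref{thm: co-root}). These supply the extra periodic vertices needed to build the SCM $d$-gon (and, taking images of co-roots along the orbit, the rotational $k(d-1)$-gon); your proposal never produces such vertices.

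Your verification strategy also rests on a false premise: the MAC and SCM laminations are \emph{not} the same lamination, so you cannot close the argument by showing the two pullback schemes ``generate the same sibling $d$-invariant lamination.'' A canonical MAC lamination is unicritical (one degree $d$ Fatou gap), while a canonical SCM lamination is maximally critical (all Fatou gaps of degree two); they differ by the grand orbit of $P\setminus M$. Likewise, in $\Psi$ you cannot in general complete the $d-1$ guiding critical chords of the SCM lamination to an all critical $d$-gon: those chords touch endpoints of distinct major sides of $P$, are pairwise disjoint, and their endpoints need not all be preimages of a single point, so no ``closing critical chord'' is forced. The paper instead recovers $M$ (Lemma \ref{lem: recovering M}), removes the grand orbit of $P\setminus M$, proves the remainder is still a $d$-invariant lamination, and shows there is room for a new all critical $d$-gon inside the central gap bounded by $M$ and its siblings (Lemmas \ref{lem: pre-lam SCM to MAC}--\ref{lem: Canonical SCM to MAC}). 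Some argument of this removal-and-reinsertion type, together with the co-root construction in the forward direction, is needed; as written, both of your maps and the inverse-check fail.
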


In this section, we will develop an understanding of the structure of both SCM and MAC laminations as well as any similarities they might have between them. We establish one direction of Theorem \ref{thm: Main} in this section, and the other direction in the following section. 

\subsection{Endcaps}

We now want to define what we mean by \emph{endcaps} or \emph{endcaps intervals}. Assume we have a forward invariant set, $F$, and a compatible critical set, $C$. Take the major leaf from $F$. (In our cases, our major leaf is unique) Following the process from Section \ref{sec: pullback}, generate the first pullback to get the siblings of our initial leaf. One endpoint of each of the leaf and siblings will be touching a critical chord. An \textbf{endcap interval} is the interval from the endpoint of the leaf or sibling not touching a critical chord to the associated critical chord. In the special case of the symmetric sibling portrait, all siblings are the same length and each endcap interval is also the same length. Each endcap, where we start with a major leaf, maps one-to-one to the minor since end points of the interval are the end points of the major or one of its siblings.

\begin{figure}
    \centering
    \includegraphics[width=3in]{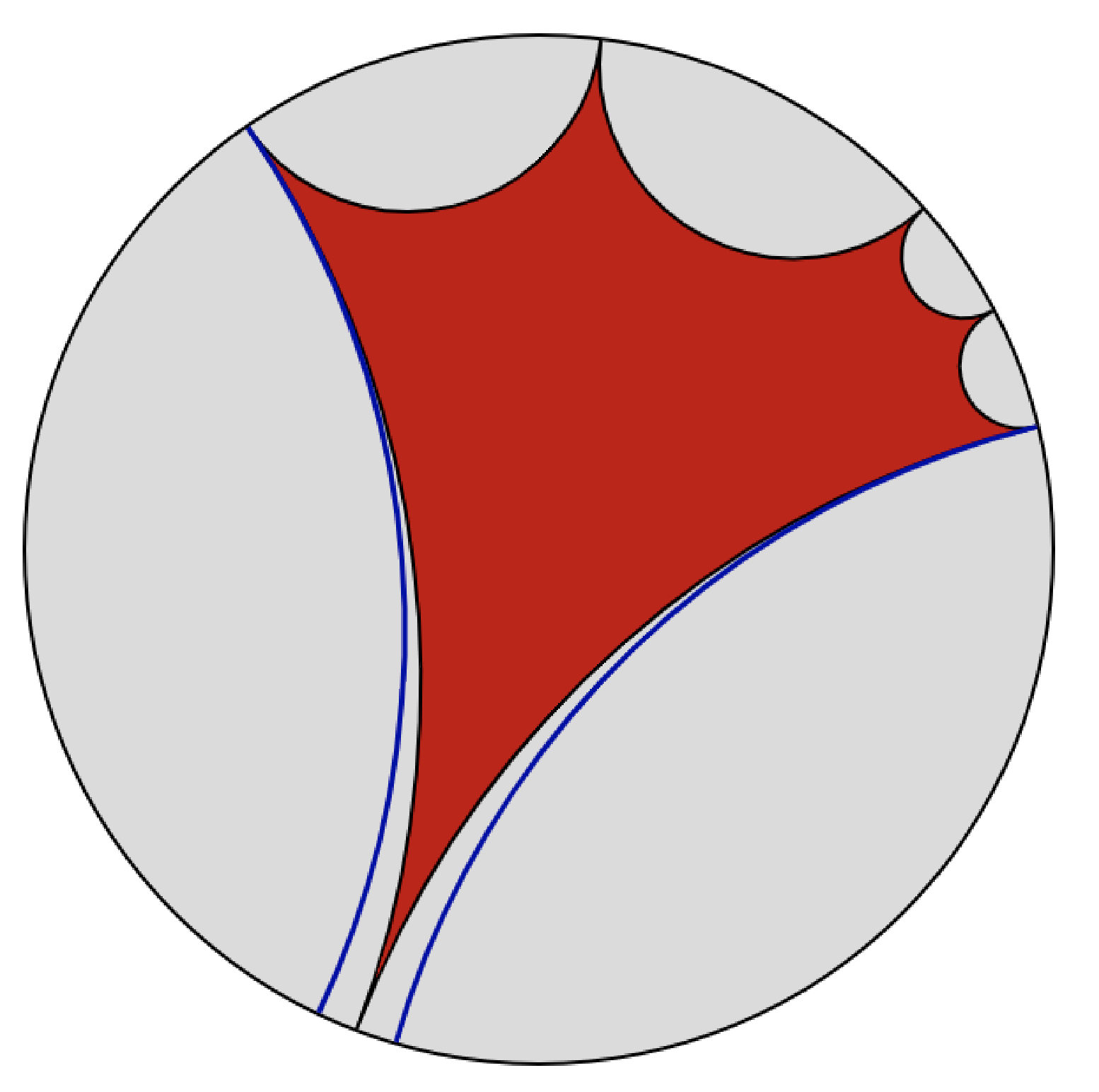}
    \includegraphics[width=3in]{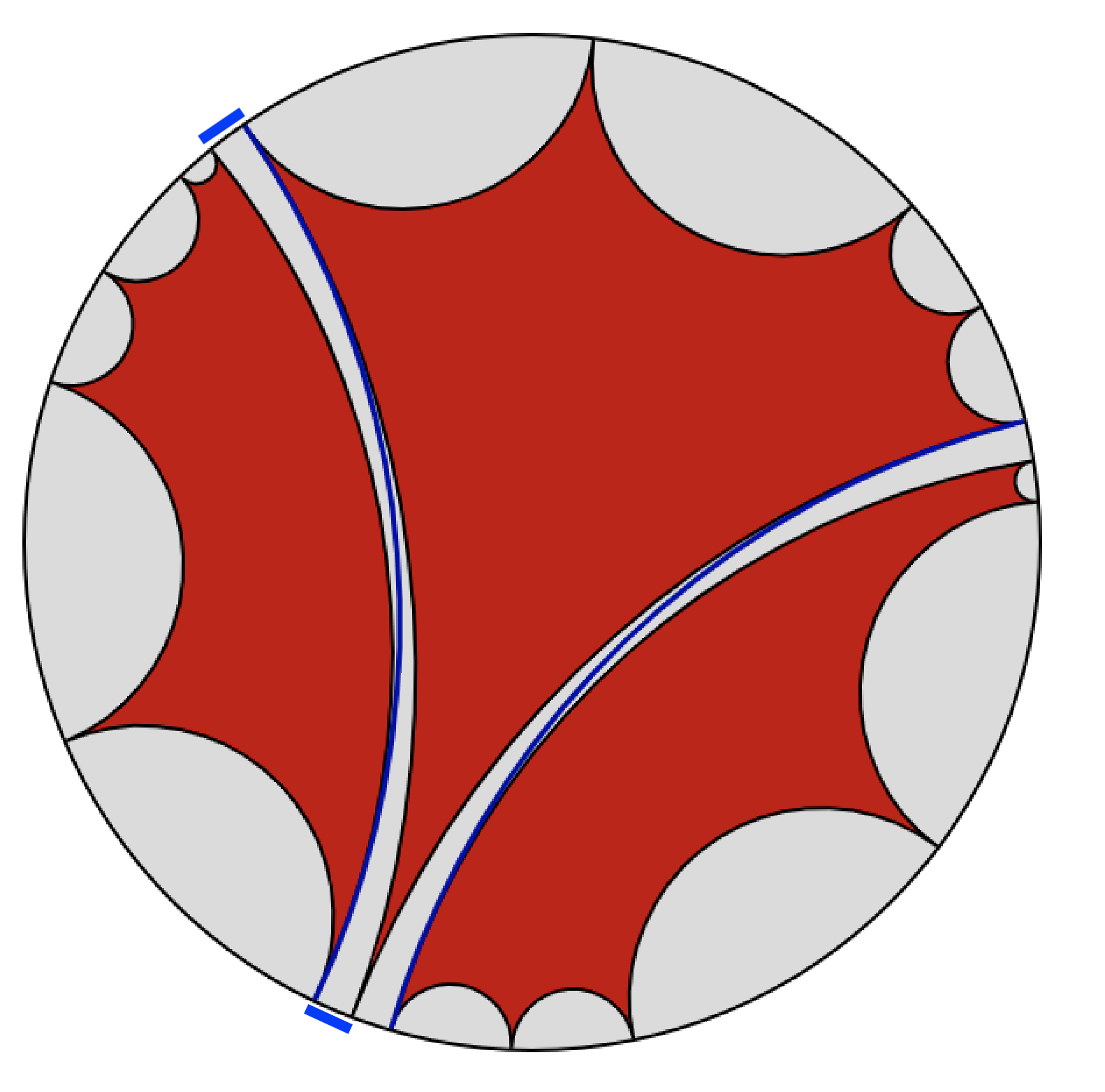}
    \caption{Here we have an initial polygon on the left and its first pullback on the right. On the right, the two endcaps associated with the left sibling are highlighted.}
    \label{fig:endcap}
\end{figure}

\subsection{Central Strips}
\begin{defn}[Central Strip]
Consider the {\em sibling portrait} of a full collection, $\mathcal S$, of sibling leaves where no leaf maps to a diameter and at least one leaf is longer than $\frac{1}{d+1}$. The {\em central strip} is the union of components of $\cl\disk \setminus \cup \mathcal{S}$ such that these components contain at least two arcs of the circle of length less than $\frac{1}{2d}$.
\end{defn}

\begin{thm}[Central Strip Lemma  \cite{Cosper:2016}]\label{thm: CSL}
Let $C$ be a central strip of a leaf, $\ell$, and its siblings where the length, $\eta$, of any arc of $C$ is less than $\frac{1}{d(d+1)}$ (This is a {\em narrow} central strip). Then, the following hold:

\begin{enumerate}
\item The first image $\ell_1=\sigma_d(\ell)$ cannot re-enter $C$.
\item The second image $\ell_2=\sigma_d^2(\ell)$ cannot re-enter $C$ with both endpoints in a single component of $C\cap\ucirc$.
\item If an iterate
$\ell_j=\sigma_d^j(\ell)$ of $\ell$ re-enters $C$, for least $j>2$,
and has endpoints lying in one component of $C\cap\ucirc$,
then iterate $\ell_k$, for some $k\le j-1$, gets at least as close in the
endpoint metric as
 $\displaystyle\frac{\eta}{d^{j-k}}$ to a critical chord in $\cl\disk\setminus C$.
 \end{enumerate}
\end{thm}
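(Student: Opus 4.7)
The plan is to leverage the narrowness hypothesis $\eta < \frac{1}{d(d+1)}$ through careful bookkeeping of leaf lengths under $\sigma_d$ and its branches of the inverse. The key observation is that a leaf $\ell_i$ lying inside $C$ with both endpoints in a single arc of $C \cap \ucirc$ has $|\ell_i| \le \eta$, and any $\sigma_d$-preimage of $\ell_i$ confined to one sector of the sibling portrait has length $|\ell_i|/d$. Combined with $d\eta < \frac{1}{d+1}$, which by Proposition \ref{prop: leaf growth} is strictly below the first positive fixed point of the length function, this bound controls where short iterates can live and prevents uncontrolled growth of pre-images.

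For part (1), observe that $\ell$ is a side of $C$ with endpoints on distinct arcs of $C \cap \ucirc$, so $\ell_1 = \sigma_d(\ell)$ is the common image of all siblings (the minor). A direct position argument shows that the $\sigma_d$-images of the two arcs of $C$ incident to $\ell$ are arcs of length $d\eta$ based at the endpoints of $\ell_1$; a length comparison using $d\eta < \frac{1}{d+1}$ shows these image arcs cannot be contained in the arcs of $C$ itself, forcing $\ell_1$ out of $C$. For part (2), suppose for contradiction that $\ell_2$ lies in $C$ with both endpoints in a single arc component, so $|\ell_2| < \eta$. Under the appropriate inverse branch, $\ell_1 = \sigma_d^{-1}(\ell_2)$ either lies in a single sector with length $|\ell_2|/d < \eta/d$, or it spans a critical chord of the sibling portrait. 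Either alternative places $\ell_1$ inside or immediately adjacent to $C$, contradicting part (1).

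For part (3), proceed by backward induction starting at $\ell_j$. Let $j > 2$ be the least index with $\ell_j$ re-entering $C$ and both endpoints in a single arc component. Pull back successively: as long as $\ell_{j-m}$ also lies in $C$ with both endpoints in one arc, each preimage lies in a single sector and $|\ell_{j-m}| \le \eta/d^m$. By parts (1) and (2), this chain cannot continue all the way down to $\ell_1$ or $\ell_2$. Let $k \le j-1$ be the largest index at which the chain breaks: $\ell_k$ is in $C$ with $|\ell_k| \le \eta/d^{j-k}$, but $\ell_{k-1}$ fails to sit in $C$ with both endpoints in one arc. The failure is exactly that pulling back the short sub-arc containing $\ell_k$'s endpoints bumps against a critical chord of the pullback scheme lying in $\cl\disk \setminus C$; hence $\ell_k$ lies within distance $\eta/d^{j-k}$ in the endpoint metric of such a critical chord, as desired.

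The main obstacle will be the geometric bookkeeping in part (3): pinpointing which critical chord of $\cl\disk \setminus C$ is approached at step $k$, verifying that the $1/d$-scaling of lengths is attained under iterated pre-images rather than merely bounded, and handling central strips consisting of more than two arcs (which can arise from non-symmetric or nested sibling configurations). A secondary subtlety is ruling out the possibility that a very short iterate's endpoints lie near critical chords unrelated to $C$ (for instance, critical chords bounding pre-images of $C$'s arcs elsewhere on the circle); the narrowness hypothesis together with Proposition \ref{prop: leaf growth} should rule out such spurious near-approaches, since any leaf so short must have been generated by the induced cascade of pre-images and therefore sits in the sector where the bookkeeping applies.
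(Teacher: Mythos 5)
First, a point of reference: the paper does not prove this statement itself --- it imports it from Cosper et al.\ \cite{Cosper:2016} --- so your proposal has to be judged against that source's argument, which is a sharper length-bookkeeping argument than the one you sketch. The missing idea is the structural fact that powers parts (1) and (2): every arc of $C\cap\ucirc$ is bounded by one preimage of each endpoint of the minor (its endpoints cannot both map to the same point, since the arc is shorter than $\frac1d$), so each arc maps one-to-one \emph{onto} the short arc subtended by $\ell_1=\sigma_d(\ell)$. Hence all arcs of $C$ have the same length $\eta$ and $|\ell_1|=d\eta$ exactly, and $|\ell_2|$ is either $d^2\eta$ or $1-d^2\eta$, both greater than $\eta$. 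With this, (1) follows: a re-entry of $\ell_1$ with both endpoints in one arc is impossible because $|\ell_1|=d\eta>\eta$, and a re-entry with endpoints in two different arcs is impossible because $\ell_1$ cannot cross the long boundary siblings of $C$, so both arcs it subtends would contain a sibling of length $>\frac{1}{d+1}>d\eta=|\ell_1|$; and (2) follows since a leaf with both endpoints in one component has length at most $\eta<|\ell_2|$. Your stated deductions do not substitute for this: in (1), ``the image arcs cannot be contained in the arcs of $C$'' does not prevent the endpoints of $\ell_1$ from lying in $C\cap\ucirc$; and in (2), the dichotomy ``$\ell_1$ is short or spans a critical chord'' does not place $\ell_1$ ``inside or immediately adjacent to $C$'' --- a leaf of length less than $\eta/d$ can sit anywhere on the circle, so no contradiction with (1) is obtained. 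As written, (1) and (2) are non sequiturs.

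For (3), your backward-induction skeleton (pull back, lengths scale by $\frac1d$ while both endpoints stay in a single monotone preimage arc, and the first ``split'' witnesses proximity to a critical chord) is the right shape, but two things are off. The chain condition is misstated: by minimality of $j$, no $\ell_k$ with $k<j$ lies in $C$ with both endpoints in one component, so a chain of leaves ``in $C$ with both endpoints in one arc'' breaks instantly; the condition you actually need to propagate is that both endpoints of $\ell_k$ lie in a single component of $\sigma_d^{-(j-k)}$ of the arc containing $\ell_j$, wherever on the circle that component sits, with (1) and (2) used only to show the chain cannot persist down to $k=1,2$ (since $|\ell_1|=d\eta$ and $|\ell_2|>\eta$ are too long). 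More seriously, the crux of (3) --- that the critical chord approached at the break lies in $\cl\disk\setminus C$ rather than being one of the critical chords joining endpoints of arcs of $C$ (which lie inside $C$) --- is exactly what you defer to ``the main obstacle'' rather than prove; excluding that alternative requires an argument (e.g.\ that such proximity inside $C$ would force an earlier iterate into $C$ with both endpoints in one component, violating minimality of $j$). So the proposal is a plausible outline, but both the engine for (1)--(2) and the decisive step of (3) are missing.
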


The proof of the following corollary, first  stated in \cite{Cosper:2016}, is left to the reader.

\begin{cor}[Unicritical Central Strip Lemma]\label{cor: uni CSL}
 Let $C$ be the central strip of leaf, $\ell$, and its siblings for the map $\sigma_d$ in a unicritical lamination.  Then, no image of $\ell$ can re-enter $C$ with both endpoints in a single component of $C\cap\ucirc$.
\end{cor}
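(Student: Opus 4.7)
The strategy is to reduce the corollary to the three-part Central Strip Lemma (Theorem \ref{thm: CSL}) by showing that in a unicritical lamination the central strip $C$ is automatically narrow and that every critical chord of $\lam$ lies inside $C$, so that part (3) of the CSL becomes vacuous.

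First I would set $\ell = \cl{ab}$ with $x = |\ell|$; the existence of $C$ forces $x > \tfrac{1}{d+1}$ after relabeling. In a unicritical lamination the critical portrait is a single all-critical $d$-gon, say with vertices $p + k/d$ for $k = 0, 1, \dots, d-1$. The $d$ critical sectors it determines force each leaf of the full sibling collection of $\ell$ to lie in a distinct sector, since otherwise a sibling would cross a side of the $d$-gon; hence the siblings are the rotated copies $s_k = \cl{(a + k/d)(b + k/d)}$. Each of the $d$ arcs of $C \cap \ucirc$ between consecutive siblings then has length $\eta = \tfrac{1}{d} - x$, and the inequality $x > \tfrac{1}{d+1}$ yields $\eta < \tfrac{1}{d(d+1)}$, so $C$ is narrow in the sense required by Theorem \ref{thm: CSL}.

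Next I would verify that every critical chord of $\lam$ lies in $C$. Since each $s_k$ sits strictly inside its sector, the vertex $p + k/d$ lands in the short arc of $C \cap \ucirc$ between $s_{k-1}$ and $s_k$, placing every vertex of the $d$-gon on the boundary of $C$. A side of the $d$-gon joins consecutive vertices $p + k/d$ and $p + (k+1)/d$; these endpoints do not interlace with the endpoints of any sibling, because they bracket $s_k$ and both lie on the same side of every other sibling. The side therefore crosses no sibling and is contained in $C$, so $\cl\disk \setminus C$ contains no critical chord of $\lam$.

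Finally I would invoke Theorem \ref{thm: CSL}. Parts (1) and (2) directly rule out re-entry of $\sigma_d(\ell)$ or $\sigma_d^2(\ell)$ with both endpoints in one component of $C \cap \ucirc$. If the least $j > 2$ for which $\sigma_d^j(\ell)$ re-entered $C$ with both endpoints in a single component existed, then part (3) would force some iterate $\sigma_d^k(\ell)$ with $k \le j-1$ to land within $\eta/d^{j-k}$ of a critical chord in $\cl\disk \setminus C$, which is impossible by the previous paragraph. The main obstacle I anticipate is the planar bookkeeping needed to confirm that each side of the critical $d$-gon sits inside $C$; once that containment is in hand, the narrowness estimate and the contradiction via part (3) follow immediately.
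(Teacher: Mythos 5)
Your overall strategy is the intended one (the paper leaves this proof to the reader precisely because it is this reduction): compatibility with the all critical $d$-gon forces the full sibling collection of $\ell$ to be the symmetric one, each arc of $C\cap\ucirc$ has length $\eta=\frac1d-|\ell|$, and $|\ell|>\frac{1}{d+1}$ gives $\eta<\frac{1}{d(d+1)}$, so the strip is narrow and Theorem \ref{thm: CSL} applies; parts (1) and (2) handle the first two images and part (3) is to be contradicted for the least $j>2$. That much is correct. The gap is in your final inference. Theorem \ref{thm: CSL}(3) asserts closeness to \emph{a critical chord in $\cl\disk\setminus C$}, and the paper's notion of critical chord is purely geometric (any chord whose two endpoints have the same image under $\sigma_d$), with no requirement that it be a leaf of $\lam$ or a side of the critical portrait. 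What you prove is only that the sides of the all critical $d$-gon lie in $C$, i.e., that no critical chord \emph{of the lamination} lies outside $C$. That does not rule out some other geometric critical chord sitting in a component of $\cl\disk\setminus C$, so the step ``which is impossible by the previous paragraph'' does not follow as written.

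The stronger statement you actually need is true and is immediate from your own setup: each component of $\cl\disk\setminus C$ is the cap subtended by one sibling $s_k$, and it meets $\ucirc$ in an arc of length $|\ell|\le\frac1d$ (compatibility with the $d$-gon confines every leaf of a unicritical lamination to a closed critical sector, so $|\ell|\le\frac1d$, with equality only when $\ell$ is itself critical, in which case its images are degenerate and there is nothing to prove). Since the endpoints of any critical chord are at circular distance at least $\frac1d$, no critical chord whatsoever has both endpoints in such an arc, so $\cl\disk\setminus C$ contains no critical chord at all, and Theorem \ref{thm: CSL}(3) cannot be satisfied. Replacing (or supplementing) your paragraph about the $d$-gon with this arc-length observation closes the gap; the remainder of your argument, including the symmetry of the sibling portrait and the narrowness estimate, stands. (A minor point worth a sentence in a polished write-up: if $\ell$ shares an endpoint with a vertex of the $d$-gon, the full sibling collection is still rotation-invariant, so the computation of $\eta$ is unaffected.)
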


\begin{cor}[Maximal Central Strip Lemma]\label{max cen strip}

Let $P$ be a maximally critical SCM polygon. Then, the orbit of any major cannot re-enter $C$ with both endpoints in a single component of $C\cap\ucirc$.

\end{cor}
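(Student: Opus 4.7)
The plan is to mimic the proof of the Unicritical Central Strip Lemma (Corollary~\ref{cor: uni CSL}), applying the Central Strip Lemma (Theorem~\ref{thm: CSL}) together with the SCM condition to rule out any re-entry of the orbit of a major into $C$ with both endpoints in a single component of $C \cap \ucirc$. First I would verify that $C$ is a narrow central strip so that Theorem~\ref{thm: CSL} applies: because $P$ is SCM, it lies inside a maximal critical sector at its closest moment, with $d-1$ consecutive long sides packed close to critical length, and this geometric constraint yields $\eta < \frac{1}{d(d+1)}$ for each arc of $C \cap \ucirc$.

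Next, suppose for contradiction that some iterate $\ell_j = \sigma_d^j(M)$ of a major $M$ re-enters $C$ with both endpoints in a single component of $C \cap \ucirc$. Parts (1) and (2) of Theorem~\ref{thm: CSL} force $j \geq 3$, and part (3) then produces an earlier iterate $\ell_k$ with $k \leq j - 1$ at endpoint distance at most $\eta/d^{j-k}$ from some critical chord $c \in \cl\disk \setminus C$.

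The key structural observation is that in the maximally critical configuration of Definition~\ref{def: max crit poly}, the $d-1$ guiding critical chords are all separate (no all-critical polygons), each attached to one of the $d-1$ long sides of $P$, arranged to bound the maximal critical sector $S$ containing $P$ at its closest moment. Of these, only the critical chord attached to $M$ itself can possibly lie inside $C$; the remaining $d - 2$ lie outside $C$, in critical sectors different from $S$. Hence $c$ lies in a sector distinct from $S$. Now $\ell_k = \sigma_d^k(M)$ is a side of $\sigma_d^k(P)$, that is, a side of a polygon in the orbit of $P$. Its endpoint distance $\eta/d^{j-k} \leq \eta/d$ to $c$ is strictly smaller than $M$'s own approach to criticality in $S$ (which is of order $\eta$). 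This is a closer approach of a side of an iterate of $P$ to criticality in a sector different from $S$, violating SCM condition~(2) of Definition~\ref{def:SCM}.

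The main obstacle I anticipate is the structural claim that only the critical chord attached to $M$ itself can lie inside $C$ in the maximally critical case; justifying this cleanly requires careful unpacking of Definition~\ref{def: max crit poly} and the arrangement of guiding critical chords relative to the sibling collection of $M$. Establishing the narrow-strip bound $\eta < \frac{1}{d(d+1)}$ is also delicate and relies on the specific geometry of SCM polygons packed inside a maximal critical sector.
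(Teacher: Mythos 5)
Your overall strategy lines up with the paper's: make the central strip narrow so that the Central Strip Lemma (Theorem \ref{thm: CSL}) applies, then use part (3) of that lemma together with SCM condition (2) of Definition \ref{def:SCM} to contradict a re-entry. Your second and third paragraphs are essentially the reasoning the paper compresses into the sentence ``Due to the Central Strip Lemma, it is sufficient to show that a major leaf must be within $\frac{1}{d(d+1)}$ of critical length.'' The problem is that this narrowness bound is the actual content of the paper's proof, and you assert it rather than prove it --- indeed you flag it yourself as ``delicate.'' Membership of $P$ in a maximal critical sector with $d-1$ long sides does not by itself bound the excess of a major over $\frac{1}{d}$; some accounting for the rest of the circle is required, and that is precisely what is missing.

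The paper closes this gap with a length-counting (pigeonhole) argument, carried out separately in the two cases, and your proposal never engages with either. In the identity return case the $d$-gon has $d-1$ majors, each of length greater than $\frac{1}{d}$, plus one remaining side $M$ with $|M| > \frac{1}{d+1}$; if some major exceeded $\frac{1}{d} + \frac{1}{d(d+1)} = \frac{d+2}{d(d+1)}$, the majors alone would use up more than $\frac{d+2}{d(d+1)} + \frac{d-2}{d} = \frac{d}{d+1}$ of the circle, leaving less than $\frac{1}{d+1}$ for $M$, a contradiction. In the rotational case the $k(d-1)$-gon's sides are the majors together with their pre-images; the same estimate shows the immediate pre-images alone would need more than $\frac{d+2}{d^2(d+1)} + \frac{d-2}{d^2} = \frac{1}{d+1}$, which again is more than the majors leave available. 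Your proposal does not distinguish these cases or use the structure of the rotational polygon at all. A secondary issue: your comparison of the approach distance $\eta/d^{j-k}$ with ``$M$'s own approach to criticality, which is of order $\eta$'' tacitly identifies the endcap arc length $\eta$ with the major's excess over $\frac{1}{d}$ (each endcap maps one-to-one onto the minor); making that identification precise leads back to the same quantitative estimate you left unproved, so as written the proposal's first step has no support.
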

\begin{proof}
Due to the Central Strip Lemma (Theorem \ref{thm: CSL}), it is sufficient to show that a major leaf must be within $\dfrac{1}{d(d+1)}$ of critical length. Since we have an SCM polygon, the only critical length is $\dfrac{1}{d}$. 

First, let us consider the identity return case. Our SCM polygon consists of $d-1$ majors of length greater than $\dfrac{1}{d}$. There is only room for one other leaf in a disjoint orbit from the $d-1$ majors. Call this leaf, $M$. By our definition of an SCM polygon (Definition \ref{def:SCM}), $|M| > \dfrac{1}{d+1}$.

By way of contradiction, assume there exists a leaf, $\ell$, such that $\ell$ is not within $\dfrac{1}{d(d+1)}$ of critical length. This implies $|\ell| > \dfrac{1}{d} + \dfrac{1}{d(d+1)} = \dfrac{d+2}{d(d+1)}$. The length of the other majors is $|\sum_{i=1}^{d-2}(\ell_i)| > \dfrac{d-2}{d}$. Thus, the length of our circle used up by the majors is $|\ell| + |\sum_{i=1}^{d-2}(\ell_i)| > \dfrac{d+2}{d(d+1)} + \dfrac{d-2}{d} = \dfrac{d}{d+1}$. This leaves less than $\dfrac{1}{d+1}$ for $M$, a contradiction. 

Now, let us consider the rotational case. Our SCM polygon consists of $d-1$ majors of length greater than $\dfrac{1}{d}$ and all of the pre-images of those majors.

Again, assume by way of contradiction that there exists a leaf, $\ell$, such that $\ell$ is not within $\dfrac{1}{d(d+1)}$ of critical length. As before, the majors take up more than $\dfrac{d}{d+1}$ of the circle. This leaves $\dfrac{1}{d+1}$ for all the pre-image leaves. If we consider the immediate pre-image of $\ell$, we can simply divide by $d$ to find its length since we are one-to-one except at the major. Thus, we have that $|\sigma_d^{-1}(\ell)| = \dfrac{|\ell|}{d} > \dfrac{d+2}{d^2(d+1)}$. Similarly, we can find the length of the immediate pre-images of the other majors: $|\sum_{i=1}^{d-2}\sigma_d^{-1}(\ell_i)| = \dfrac{|\sum_{i=1}^{d-2}(\ell_i)|}{d} > \dfrac{d-2}{d^2}$. Thus, the length of the pre-images of the majors is greater than  $\dfrac{d+2}{d^2(d+1)} + \dfrac{d-2}{d^2} = \dfrac{1}{d+1}$. However, we only have less than $\dfrac{1}{d+1}$ left from the majors, a contradiction.
\end{proof}



    

\subsection{Topological Julia Set}

\begin{defn}[First Return Map]
Starting with a $d$-invariant lamination and a Fatou gap, $G$, the {\em first return map}, $R$, of $G$ is the first iterate of $\sigma_d$ where $G$ returns to itself.
\end{defn}

\begin{rem}
The idea that $G$ returns to itself after some $\sigma^k_d$ makes sense, because we know $G$ is periodic.
\end{rem}

\begin{defn}[Topological Julia Set]
Starting with a $d$-invariant lamination, $\mathcal{L}$, let $\sim_\mathcal{L}$ be the induced equivalence relation. Let $J(\mathcal{L})$ be the quotient space of $\sim_\mathcal{L}$ and let $h : \ucirc \to J(\mathcal{L})$ be the natural projection. We call $J(\mathcal{L})$, a {\em topological Julia set}.   
\end{defn}

\begin{rem}
We can think of the quotient map as going from the lamination to $J(\mathcal{L})$ shrinking leaves and polygons to points. It follows from the definition of $d$-invariant lamination that equivalence classes map to equivalence classes, and the pre-image of equivalence classes is a union of equivalence classes.
\end{rem}

\begin{defn}[Induced Return Map]
We call $$P(x) = h(\sigma(h^{-1}(x))) : J(\mathcal{L}) \to J(\mathcal{L})$$ a {\em topological polynomial}. If we take $\hat{G} = h(\partial G)$ then $R$ induces a map $\hat{R} : \hat{G} \to \hat{G}$ which we call the {\em induced return map}.
\end{defn}

Where no confusion will result, we will use $R$ for the first return map on both laminations and topological Julia sets.

Note a map $f: \ucirc \to \ucirc$ is {\em topologically exact} iff for any interval $I \subset \ucirc$, there exists some $n$ such that $f^n(I) = \ucirc$. The following theorem is well known. (See \cite{Blokh:2006}.)

\begin{thm}
A topologically exact covering map $f: \ucirc \to \ucirc$ is conjugate to $\sigma_d$ for some $d \ge 2$.
\end{thm}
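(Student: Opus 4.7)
The plan is to construct a Markov partition of $\ucirc$ from the preimages of a fixed point and then define a conjugacy $h : \ucirc \to \ucirc$ by symbolic dynamics, reading off a $d$-nary address for each point from its itinerary in the partition. This is the standard ``coding by itineraries'' approach, with topological exactness playing exactly the role of preventing nested cylinder sets from collapsing to positive length in the limit.

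To set this up, I first note that because $f$ is a covering map, it lifts to $\tilde f : \mathbb{R} \to \mathbb{R}$ with $\tilde f(x+1) = \tilde f(x) + d$ for a well-defined integer $d$ (its degree). A homeomorphism of $\ucirc$ (degree $\pm 1$) cannot be topologically exact, so $|d| \geq 2$; after reversing orientation if necessary, take $d \geq 2$. Next I would produce a fixed point: since $\tilde f(x) - x$ increases by $d - 1 \geq 1$ over $[0,1]$, it attains every integer value in between, yielding at least $d - 1$ fixed points of $f$. Pick one, call it $p$, and let $p_0, p_1, \ldots, p_{d-1}$ be the $d$ preimages of $p$ under $f$, enumerated in cyclic order. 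These points partition $\ucirc$ into $d$ closed arcs $J_0, \ldots, J_{d-1}$, and on each $J_j$ the map $f$ is continuous with interior carried homeomorphically onto $\ucirc \setminus \{p\}$ and both endpoints sent to $p$.

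For each finite word $w = (a_0, \ldots, a_n) \in \{0, 1, \ldots, d-1\}^{n+1}$, let $J_w = \{x \in \ucirc : f^k(x) \in J_{a_k} \text{ for } 0 \leq k \leq n\}$. Induction on $n$, using that each $J_j$ covers $\ucirc \setminus \{p\}$, shows that $J_w$ is a closed subarc on which $f^n$ is a continuous surjection onto $J_{a_n}$ (a homeomorphism on its interior). The heart of the argument is that $\mathrm{diam}(J_w) \to 0$ as the length of $w$ grows: if some nested sequence $J_{w_0} \supset J_{w_1} \supset \cdots$ had an intersection containing a nondegenerate arc $I$, then $f^n(I) \subset J_{a_n} \subsetneq \ucirc$ for every $n$, contradicting topological exactness. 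I expect this to be the main obstacle of the proof, since we must extract a geometric shrinking statement from a purely topological hypothesis with no expansion estimate available.

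Finally, define $h : \ucirc \to \ucirc$ by $h(x) = \sum_{k=0}^\infty a_k/d^{k+1} \pmod 1$, where $(a_0, a_1, \ldots)$ is the itinerary of $x$. The identity $h \circ f = \sigma_d \circ h$ is immediate from the shift $(a_0, a_1, \ldots) \mapsto (a_1, a_2, \ldots)$ induced on itineraries by $f$. Continuity and surjectivity of $h$ are standard: every infinite word is realized by a nested sequence $J_w$, which has nonempty intersection by compactness. Injectivity is where the shrinking of $\mathrm{diam}(J_w)$ enters, since any two distinct points eventually receive different itineraries, modulo the unavoidable $d$-nary ambiguity at points in the grand orbit of $p$ (which corresponds precisely to the ambiguity in expansions such as $0.0\overline{d-1} = 0.1\overline{0}$). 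A continuous bijection from a compact space to a Hausdorff space is a homeomorphism, so $h$ is the desired conjugacy.
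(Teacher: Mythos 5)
The paper does not actually prove this theorem: it is stated as well known, with a citation to Blokh--Malaugh--Mayer--Oversteegen--Parris, so there is no internal argument to compare yours against. Taken on its own, your itinerary-coding proof is the standard route, and you have correctly identified and handled the one step where the hypothesis of topological exactness must do real work: if a nondegenerate arc $I$ lay in every cylinder of an infinite word, then $f^n(I)\subset J_{a_n}\subsetneq\ucirc$ for all $n$, contradicting exactness; hence each infinite nested cylinder intersection is a point (and a K\"onig's-lemma remark on the finitely branching tree of ``bad'' words upgrades this to uniform shrinking of level-$n$ cylinders, should you want it for continuity).

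Two points need repair or completion. First, the orientation reduction is wrong: conjugation never changes the degree, since $\deg(r\circ f\circ r^{-1})=(-1)\cdot\deg(f)\cdot(-1)=\deg(f)$ for an orientation-reversing homeomorphism $r$. So a topologically exact covering map of degree $-d$ (for instance $t\mapsto -2t \bmod 1$) is \emph{not} conjugate to any $\sigma_d$ with $d\ge 2$, and your ``reverse orientation if necessary'' step cannot be carried out. The theorem as used in the paper implicitly concerns positive degree (the maps in question are return maps of $\sigma_d$ on Fatou gap boundaries, hence orientation-preserving); you should simply add that hypothesis rather than claim a reduction. Second, injectivity of $h$ on the backward orbit of $p$ is more than an ``unavoidable ambiguity'': you must verify that distinct points of $f^{-k}(p)$ receive distinct $d$-adic values, since that is exactly where two different itineraries can produce equal sums. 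This can be done either by noting that $\deg(f)=d>0$ makes $f$ orientation-preserving, so $f|_{J_j}$ preserves circular order, the level-$n$ cylinders sit in the same cyclic order as the corresponding $d$-adic intervals, and $h$ is monotone of degree one; or by a counting argument: your surjectivity step shows every $d$-adic rational of level at most $k$ is attained, the cylinder-shrinking fact shows it can only be attained at a point of $f^{-k}(p)$, and both sets have exactly $d^k$ elements, so $h$ restricted to $f^{-k}(p)$ is a bijection. Either completion is routine, but without one of them the ``modulo the ambiguity'' sentence does not yet yield injectivity.
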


\begin{defn}[Hyperbolic Lamination]
A $d$-invariant lamination is said to be \emph{hyperbolic} if and only if all compatible critical chords are interior to periodic Fatou gaps. 
\end{defn}

\begin{prop}
Canonical MAC laminations and canonical SCM laminations are hyperbolic.
\end{prop}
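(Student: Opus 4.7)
The plan is to identify, for each compatible critical chord, an explicit periodic Fatou gap of the canonical lamination that contains it in its interior. The canonical MAC and canonical SCM cases share a common three-step skeleton, and I would handle them in parallel.

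First, I would observe that compatible critical chords are never crossed by lamination leaves. In both canonical constructions, the critical portrait $\mathcal{C}$ partitions the closed disk into critical sectors, and the branches of the inverse $\tau_i$ used in the pullback scheme each map $\ucirc$ into a single critical sector. Since every leaf of the canonical lamination is obtained by iteratively applying some composition of the $\tau_i$ to a leaf (or side) of the initial forward invariant set, no leaf ever crosses a chord of $\mathcal{C}$.

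Second, for each compatible critical chord I would locate the gap of the canonical lamination containing it. In the MAC case the $d$ sides of the all critical $d$-gon $T$ are the compatible critical chords; after the first pullback stage, $M$ and its $d-1$ siblings (each sharing an endpoint with $T$) together with arcs of $\ucirc$ bound a region whose interior contains $T$. Each subsequent pullback stage may refine the boundary of this region along $\ucirc$, but by the no-crossing observation above no leaf of the pullback can separate $T$ from the bounding leaves. Taking the nested intersection over all pullback stages yields a single gap $F$ of $\mathcal{L}(M)$ whose interior contains $T$. In the SCM case, the same argument, applied one critical chord at a time, produces for each $c_i$ attached to a major $M_i$ a gap $F_i$ of the canonical SCM lamination whose interior contains $c_i$.

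Third, I would verify that each such gap is a Fatou gap and is periodic. The induced first return map on the boundary of $F$ (respectively $F_i$) is a covering of $\ucirc$ of degree $d$ (respectively $2$), and it is topologically exact; by the theorem on topologically exact covering maps cited just above it is conjugate to $\sigma_d$ (respectively $\sigma_2$). Topological exactness forces pullback leaves to accumulate densely on the boundary arcs of the gap, so the boundary of $F$ on $\ucirc$ is a Cantor set, confirming that $F$ is a Fatou gap. Periodicity is then immediate: $M$ (respectively $M_i$) is a boundary leaf of $F$ of period $n$, so $F$ returns to itself under $\sigma_d^n$.

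The main obstacle is the Fatou-gap verification in the third step --- specifically, confirming that the boundary of $F$ on $\ucirc$ is a Cantor set rather than a finite union of arcs. This reduces to establishing topological exactness of the induced first return map on $F$, after which the theorem on topologically exact covering maps yields the required Cantor structure automatically. The other two steps follow in a mostly formal way from the structure of the pullback scheme and the periodicity of $M$.
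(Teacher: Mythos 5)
The paper states this proposition without proof (the supporting facts only appear later, in Lemma \ref{Top1} and Theorem \ref{thm: canonical SCM props}), so your attempt can only be measured against the intended argument; judged that way, your outline heads in the right direction but has a genuine gap at its core. Hyperbolicity, as defined here, requires that \emph{every} compatible critical chord be interior to a periodic Fatou gap, whereas you only treat the guiding chords of the construction, asserting in the MAC case that ``the $d$ sides of the all critical $d$-gon are the compatible critical chords.'' That assertion is not correct (any critical chord lying inside the central gap is compatible), and, more importantly, it skips the step your argument actually needs: ruling out compatible critical chords sitting in some \emph{other} gap (a pre-image of the central gap, a finite gap, a gap elsewhere in the orbit). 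What is missing is a criticality count: a chord that crosses no leaf lies in the closure of a single gap; a critical chord can only lie in a gap whose circle basis fails to map injectively, i.e.\ a gap of degree at least two; and in the canonical MAC (resp.\ canonical SCM) lamination the central degree-$d$ gap (resp.\ the $d-1$ degree-two gaps bounded by each major and its sibling) already exhausts all $d-1$ units of criticality, so every other gap maps forward one-to-one on its basis and can contain no critical chord. Without an argument of this kind the quantifier ``all compatible critical chords'' is simply not addressed.

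The step you yourself flag as the main obstacle is also left open, and as framed it is circular: you deduce that the boundary of $F$ meets the circle in a Cantor set from the induced first return map being a topologically exact degree-$d$ (resp.\ degree-$2$) covering, but the degree and exactness of that return map are only available once the structure of the gap is already known. The clean route is to invoke Lemma \ref{Top1} (quoted from Schleicher) for the MAC central gap and Theorem \ref{thm: canonical SCM props}(2) for the canonical SCM degree-two gaps, or else to prove directly from the pullback scheme that pre-images of $M$ accumulate on every point of the gap's circle basis. Your periodicity observation is fine as stated. A last small point: in the SCM case each guiding critical chord shares an endpoint with its major, which is a vertex of the corresponding Fatou gap, so you should say explicitly that this is still ``interior'' in the intended sense, namely that the chord meets the lamination only at endpoints of leaves and its relative interior lies in the open gap.
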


\subsection{Structure of Canonical Unicritical Laminations}

\begin{lem}[Fatou Gaps of Unicritical Laminations \cite{Schleicher:1999}]\label{Top1}
Let $\mathcal{L}$ be a degree $d$ unicritical lamination with a periodic major, $M$. The following hold: 
\begin{enumerate}
    \item There is a periodic Fatou gap, $G$, compatible with an all critical $d$-gon. Hence, $\mathcal{L}$ is hyperbolic. 
    \item All boundary leaves of $G$ are pre-images of the orbit of $M$, and the convex hull of $G\cap \ucirc$ is $G$.
    \item The induced return map $\hat{R}: \hat{G} \to \hat{G}$ is conjugate to $\sigma_d$.
\end{enumerate}
\end{lem}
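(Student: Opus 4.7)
The plan is to locate the Fatou gap $G$ inside the given unicritical lamination $\mathcal{L}$ using the compatible all critical $d$-gon $\Delta$ (which exists by Definition \ref{unicritical}) and then verify the three claims in sequence. Let $k$ denote the period of $M$.

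For part (1), compatibility forces $\Delta$ to lie entirely in the closure of a single gap $G$ of $\mathcal{L}$, since no leaf meets the interior of $\Delta$. I propose this $G$ as the desired periodic Fatou gap. Because $M$ is a boundary leaf of $G$ --- it abuts $\Delta$ as the major --- and $\sigma_d^k(M) = M$, the image $\sigma_d^k(G)$ is a gap with $M$ on its boundary. Since $\Delta$ is the unique all critical $d$-gon in the compatible critical portrait and must lie in the gap on the $\Delta$-side of $M$, we get $\sigma_d^k(G) = G$. This establishes periodicity, and compatibility with $\Delta$ yields hyperbolicity.

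For part (2), by the pullback construction of Section \ref{sec: pullback}, every leaf of $\mathcal{L}$ arises as an iterated $\tau$-preimage of some forward image of $M$, so lies in $\mathcal{GO}(M)$ (Definition \ref{def: grand orbit}); in particular all boundary leaves of $G$ do. For the convex hull claim, the endpoints of the boundary leaves of $G$ accumulate on a Cantor set $K \subset \ucirc$ formed as iterated pullbacks approach the vertices of $\Delta$, and each boundary chord of $G$ bridges a complementary arc of $K$. Hence $G \cap \ucirc = K$, $G = \operatorname{Conv}(K)$, and in particular $G$ is a Fatou gap.

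For part (3), the $d$-fold criticality of $\Delta \subset G$ implies the first return $R = \sigma_d^k$ acts on $\partial G$ with degree $d$. Passing to the quotient $h \colon \ucirc \to J(\mathcal{L})$, boundary leaves of $G$ collapse to points so $\hat G = h(\partial G)$ is a topological circle and $\hat R$ becomes a degree $d$ self-covering. To apply the cited theorem on topologically exact covering maps and conclude conjugacy to $\sigma_d$, I would verify topological exactness: any arc $I \subset \hat G$ lifts to an arc in $\partial G \cap \ucirc$ of positive length, and Proposition \ref{prop: leaf growth} combined with the $1/d^k$-contraction of the inverse branches of $\sigma_d^k$ forces forward iterates of $I$ to grow in length until they exhaust $\hat G$. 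The main obstacle will be rigorously establishing the Cantor set structure of $G \cap \ucirc$ and the bijective correspondence between boundary leaves of $G$ and complementary arcs of this Cantor set; this requires a careful induction through the stages of the pullback scheme, tracking how pre-images of $M$'s orbit accumulate on the vertices of $\Delta$ without introducing any ``phantom'' leaves in the interior of $G$ that would disrupt the convex hull structure or the degree $d$ structure of $\hat R$.
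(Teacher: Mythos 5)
The paper does not actually prove Lemma \ref{Top1}: it is quoted from Schleicher \cite{Schleicher:1999}, so there is no internal proof to compare against. Your outline follows the standard structural route one would expect (locate the gap $G$ containing the all critical $d$-gon $\Delta$, show its boundary consists of pullbacks of the orbit of $M$ accumulating on a Cantor set, then get the conjugacy of the induced return map from the topological-exactness criterion the paper quotes from \cite{Blokh:2006}), and that strategy is sound in spirit. But as written there are genuine gaps, the largest of which you yourself flag and then do not close.

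Concretely: (i) you assume that $M$ lies in the boundary of $G$ because ``it abuts $\Delta$ as the major,'' but the paper's definition of major only says $M$ is the leaf of its periodic orbit closest to critical length; adjacency of $M$ to the central gap is part of what must be proved for a general unicritical lamination (it is built into Definition \ref{def: MAC} only for MAC leaves). (ii) The periodicity step is a non sequitur: knowing $\sigma_d^k(G)$ is a gap containing $M$ in its boundary (a priori it could even degenerate to a leaf) does not identify it with $G$; there are two gaps abutting $M$, and the uniqueness of $\Delta$ says nothing about which side $\sigma_d^k(G)$ lands on --- you need, e.g., a local order-preservation argument at a fixed endpoint of $M$ to rule out the far side. (iii) For part (2), the claim that \emph{every} leaf of $\mathcal{L}$ is an iterated $\tau$-preimage of the orbit of $M$ is false in the stated generality (the lemma is about an arbitrary unicritical lamination with periodic major, not the canonical pullback lamination), and even for a canonical lamination the closure contributes limit leaves that are not themselves pullbacks; showing that the boundary leaves of $G$ are genuine pullbacks, that $G\cap\ucirc$ is a Cantor set, and that $G$ is its convex hull is precisely the content of (2), and you explicitly defer it as ``the main obstacle,'' so this part is asserted rather than proved. (iv) In part (3), ``$\Delta\subset G$ implies the return map has degree $d$'' additionally requires that the first return time of $G$ is the period $k$ of $M$ and that the intermediate images $\sigma_d^i(G)$, $0<i<k$, carry no criticality (compare the disjointness issues handled in Lemma \ref{lem: disjoint gaps}); once that is in place, your exactness argument via expansion of arcs is the right way to invoke the conjugacy theorem.
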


\begin{prop}[MAC Orbit Location] \label{prop: MAC orbit Location}
Let $\mathcal{L}$ be a unicritical lamination with a MAC leaf, $M$. Then, all of the leaves in the MAC orbit lie outside the central gap bounded by the major and its siblings. 
\end{prop}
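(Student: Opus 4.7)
The plan is to combine Lemma \ref{Top1} (which identifies the central gap as a Fatou gap whose convex hull on $\ucirc$ recovers the gap itself) with the Unicritical Central Strip Lemma (Corollary \ref{cor: uni CSL}). First I would identify the central gap $G$ bounded by $M$ and its $d-1$ siblings as the central strip $C$ of the full sibling collection: since $M$ is a major, $|M| > \tfrac{1}{d+1}$, which forces each of the $d$ arcs of $\ucirc$ separating consecutive sibling endpoints on the inner side of $G$ to have length less than $\tfrac{1}{d(d+1)} < \tfrac{1}{2d}$, matching the definition of the central strip. By Lemma \ref{Top1}, this $G$ is a periodic Fatou gap and equals the convex hull of $G \cap \ucirc$.

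The main argument is by contradiction: suppose some iterate $\sigma_d^k(M)$ with $1 \leq k < n$ (where $n$ is the period of $M$) is contained in $G$. By the convex hull property, both endpoints of $\sigma_d^k(M)$ must lie in $G \cap \ucirc$, i.e.\ on one of the $d$ short arcs. The Unicritical Central Strip Lemma immediately excludes the case where both endpoints lie in the same short arc. So the endpoints must lie in two different short arcs, in which case the chord is either one of the sibling boundary leaves $M = M_1, M_2, \ldots, M_d$ of $G$, or it lies strictly in the interior of $G$. The latter is impossible because a leaf of $\mathcal{L}$ cannot enter a gap's interior.

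To dispose of the sibling subcase, I would observe that $\sigma_d^k(M) = M_i$ gives $\sigma_d^{k+1}(M) = \sigma_d(M_i) = m = \sigma_d(M)$; in the cyclic orbit of $M$ this forces $\sigma_d^k(M) = M$ and hence $k$ to be a multiple of $n$, contradicting $1 \leq k < n$. Since $M$ itself is a boundary chord of $G$, the full MAC orbit lies outside the interior of $G$. The main obstacle I anticipate is handling cleanly the trichotomy for chords with both endpoints on $G \cap \ucirc$ --- same arc versus different arcs, and within the latter, boundary sibling versus interior chord --- but each branch resolves cleanly: the Unicritical Central Strip Lemma kills the same-arc case, the ``no leaves in gap interior'' principle kills the interior case, and the cyclic-orbit argument kills the sibling case.
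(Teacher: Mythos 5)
There is a genuine gap in the step that handles a forward image of $M$ whose endpoints lie in two \emph{different} short arcs. Your dismissal of the ``interior'' branch rests on identifying the central region bounded by $M$ and its siblings with the periodic Fatou gap of Lemma \ref{Top1} and then invoking ``a leaf of $\mathcal{L}$ cannot enter a gap's interior.'' But that identification is not justified and is in general false: Lemma \ref{Top1} says the boundary leaves of its gap $G$ are \emph{pre-images of the orbit of} $M$, not merely $M$ and its siblings, and a Fatou gap meets $\ucirc$ only in a set containing a Cantor set, never in the full closed arcs bounding the central strip. Consequently the strip bounded by $M$ and its siblings is subdivided by leaves of $\mathcal{L}$ (boundary leaves of the true Fatou gap with both endpoints in a single arc, among other pullbacks), so it is not a gap, and ``no leaf lies in its interior'' is simply unavailable. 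Once that premise is removed, your trichotomy collapses for exactly the case that matters: an orbit leaf with endpoints in two different arcs that is not one of the $M_i$. Note also that nothing in your argument for this branch uses that the chord is in the \emph{forward} orbit of $M$, which is essential, since backward-orbit leaves genuinely do enter the strip.

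The paper closes this case differently: a leaf joining two different endcaps lies, at each endpoint, within the endcap length $\tfrac1d-|M|$ of a chord of the all-critical $d$-gon, hence would be at least as close to critical length as $M$; since $M$ was chosen as the leaf of the periodic orbit closest to critical length, no forward image of $M$ can do this. Your same-arc branch (Corollary \ref{cor: uni CSL}) and your cyclic-orbit argument ruling out $\sigma_d^k(M)=M_i$ are fine and match the paper in spirit, but to repair the proof you must replace the ``gap interior'' claim with a length/maximality argument of this kind (or some other argument that exploits membership in the forward orbit). A smaller point: the strict inequality $|M|>\tfrac{1}{d+1}$ can degenerate to equality (e.g.\ rotation number $\tfrac12$), so if you want the narrow-strip estimate you should argue it from periodicity and handle the boundary case, though Corollary \ref{cor: uni CSL} as stated does not actually require narrowness.
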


\begin{proof}
 We have a unicritical central strip with small arcs of the $d$-gon between the major's siblings and the all critical $d$-gon by Definition \ref{def: MAC}. By Corollary \ref{cor: uni CSL}, this central strip cannot be entered by any part of the forward orbit of the major and a leaf can not have both endpoints in one end cap. If we connected two different endcaps with a leaf, then such a leaf would be closer to critical length than our initial major which contradicts our choice of major leaf.
\end{proof}

\begin{thm}
In a MAC lamination with a rotational polygon, there are no limit leaves and the pullback leaves limit to points on the circle. 
\end{thm}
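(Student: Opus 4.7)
The plan is to exploit the uniform contraction of the inverse branches in the MAC pullback scheme. By Definition \ref{def: MAC lam} the guiding critical portrait is an all critical $d$-gon, which partitions $\cl\disk$ into exactly $d$ critical sectors $S_1,\dots,S_d$, each meeting $\ucirc$ in an arc of length $\frac{1}{d}$. Every branch of the inverse $\tau_i\colon\ucirc\to\partial S_i$ is a bijection from the full circle onto its sector's arc, and since $\sigma_d$ multiplies arc length by $d$ on each such arc, $|\tau_i(\ell)|=|\ell|/d$ for every chord $\ell$.

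Writing the canonical MAC lamination as $\mathcal{L}=\cl{F_\infty}$, where $F_0$ is the finite periodic orbit of the MAC leaf (and of the rotational polygon containing it, in the rotational case), an induction on the pullback level shows that each leaf in $F_n\setminus F_{n-1}$ is the image of some $\ell_0\in F_0$ under an $n$-fold composition $\tau_{i_1}\circ\cdots\circ\tau_{i_n}$. Hence its arc length equals $|\ell_0|/d^n\le 1/(2d^n)$, and endpoints subtending such a short arc are correspondingly close in the Euclidean metric, so the Hausdorff diameter of the chord tends to $0$ uniformly as $n\to\infty$.

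Now suppose a sequence $(\ell_n)\subset F_\infty$ converges in the Hausdorff metric to a chord $\ell$. If some finite stage $F_K$ contains infinitely many $\ell_n$, then these $\ell_n$ take only finitely many values and $\ell\in F_K\subset F_\infty$ is not a new limit leaf. Otherwise the pullback level of $\ell_n$ tends to infinity, forcing $|\ell_n|\to 0$, and the Hausdorff limit $\ell$ must be a single point of $\ucirc$. This yields both conclusions at once: no non-degenerate limit leaves appear in $\mathcal{L}$, and the pullback leaves accumulate only on points of the circle.

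The one step that requires care is the uniform contraction $|\tau_i(\ell)|=|\ell|/d$, which holds specifically because the guiding critical set is an all critical $d$-gon, giving each critical sector an arc of length exactly $\frac{1}{d}$ on $\ucirc$. The rotational hypothesis is used only to fix the structure of $F_0$; the contraction and accumulation argument itself is indifferent to whether the periodic polygon is rotational or identity return, so the same reasoning will apply in the identity return setting when needed elsewhere.
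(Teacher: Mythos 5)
Your strategy is the same as the paper's: the all critical $d$-gon gives the symmetric sibling portrait, each critical sector meets $\ucirc$ in an arc of length $\tfrac1d$, and iterating the inverse branches shrinks the pullback leaves, so the closure adds only points of the circle; your final paragraph merely makes explicit the Hausdorff-limit bookkeeping that the paper leaves implicit. So in approach you and the paper coincide.

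However, the step you yourself flag as the crux, the identity $|\tau_i(\ell)|=|\ell|/d$, is not correct as stated, and the induction giving length $|\ell_0|/d^n$ at level $n$ inherits the problem. Each $\tau_i$ is the inverse of $\sigma_d$ restricted to a sector arc $I_i$ of length $\tfrac1d$, hence is discontinuous at the critical value $v=\sigma_d(\text{vertex of the }d\text{-gon})$, an endpoint of the minor. If the short arc subtended by $\ell$ contains $v$ in its interior, then it is the \emph{long} arc of $\ell$ that maps into $I_i$ under $\tau_i$, and $|\tau_i(\ell)|=(1-|\ell|)/d$, which is near $\tfrac1d$, not near $|\ell|/d$. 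Consequently the statement you actually need (diameters of level-$n$ leaves tend to $0$ uniformly) is not automatic: a nested chain of pullbacks each of whose short arcs contained $v$ would have lengths obeying $s_{n+1}=(1-s_n)/d$, converging to $\tfrac{1}{d+1}$, i.e.\ it would produce exactly the kind of non-degenerate limit leaf the theorem denies. Closing this requires an argument that pullback leaves cannot straddle the critical value at infinitely many consecutive levels (equivalently, controlling how pullbacks approach the minor). To be fair, the paper's own proof (``the critical sector is broken into $d$ equal intervals each again with a pre-image in it'') makes the same tacit assumption, so you have reproduced its argument at its level of rigor, gap included; but since you advertise the contraction identity as the one step requiring care, you should know it fails precisely in the straddling case and the proof needs a supplementary argument there.
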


\begin{proof}

Let $M$ be our major leaf in a MAC lamination. In a MAC lamination, we have a symmetric sibling portrait so every critical sector is of length $\dfrac{1}{d}$. A single critical sector maps one-to-one onto the circle. Thus, as we do each step of the pullback lamination (Definition \ref{sec: pullback}), there is a pre-image in every critical sector. As we pullback again, the critical sector is broken into $d$ equal intervals each again with a pre-image in it. As we continue the pullback process, our intervals will continually get smaller and cause our pullback leaves to limit to points on the circle.  

\end{proof}


The following definition and theorem are adapted from \cite{Schleicher:1999}.

\begin{defn}[Co-root]
A {\em co-root} is a point, other than an endpoint of the major, in the boundary of the central gap of the MAC lamination that is fixed under the first return map.
\end{defn}
\begin{rem}
Each co-root is in a different endcap of the siblings of the MAC leaf. See Theorem \ref{thm: co-root}.
\end{rem}

\begin{thm}[Co-root Theorem]\label{thm: co-root}
Given a degree $d$ MAC lamination, there will be $d-2$ co-roots in the end caps of the central strip that are not adjacent to the major. The distance between co-roots is greater than $1/d$.
\end{thm}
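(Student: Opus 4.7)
The plan is to apply Lemma \ref{Top1} to translate the problem into counting fixed points of $\sigma_d$ on the circle and then to use the geometry of the all-critical $d$-gon to locate the co-roots and bound their pairwise distances.

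By Lemma \ref{Top1}, the central gap $G$ in the MAC lamination is a Fatou gap and the induced first return map $\hat R : \hat G \to \hat G$ is conjugate to $\sigma_d : \ucirc \to \ucirc$. Since $\sigma_d$ has exactly $d-1$ fixed points (at $k/(d-1)$ for $k = 0, 1, \dots, d-2$), the map $\hat R$ also has exactly $d-1$ fixed points on $\partial \hat G$. I would then identify the ``root'' class and peel it off: the two endpoints of $M$ together with all $d$ vertices of the all-critical $d$-gon $T$ form a single $\sim_{\mathcal L}$-equivalence class, because the endpoints of $M$ are identified by the leaf $M$, the vertices of $T$ are identified because $T$ is a polygon in $\mathcal L$, and one endpoint of $M$ coincides with a vertex of $T$ by Definition \ref{def: MAC lam}. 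This class projects to a fixed point of $\hat R$ since $M$ is periodic of the same period as $G$. By the definition of co-root, this ``root'' fixed point is excluded, leaving exactly $d-2$ co-roots.

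Next, I would locate each co-root in a specific endcap. Using the pullback scheme of Definition \ref{def: branches of the inverse}, each branch of the inverse of $R$ is a contraction onto its critical sector, and hence has a unique fixed point in that sector. Tracing these per-sector fixed points back through the conjugacy and through the projection $h$ places each co-root in the endcap belonging to a specific sibling other than $M$. The two endcaps adjacent to $M$'s anchor vertex on $T$ are already covered by the root class (whose representative points sit at the boundary vertices between $M$'s endcap and its immediate neighbors), so the remaining $d-2$ co-roots must occupy the $d-2$ non-adjacent endcaps, one per endcap.

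Finally, the distance bound follows from the all-critical structure of $T$. Because $T$ is all critical, its $d$ vertices are equally spaced preimages of a single point and therefore lie at distance $1/d$ apart on $\ucirc$. Each endcap lies strictly inside one of the $d$ arcs between consecutive vertices of $T$, and distinct co-roots inhabit endcaps anchored at distinct vertices, so any two co-roots are separated by at least one vertex-to-vertex gap of $T$. Combined with Corollary \ref{cor: uni CSL}, which prevents an endcap from stretching all the way across its $1/d$-gap, the separation of any two co-roots strictly exceeds $1/d$. The hardest step will be this last strict inequality: establishing $> 1/d$ rather than $\ge 1/d$ requires a careful application of the Unicritical Central Strip Lemma to guarantee that each co-root remains in the interior of its endcap, bounded away from the vertices of $T$.
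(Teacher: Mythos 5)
Your overall route is the paper's: use Lemma \ref{Top1} to conjugate the induced return map $\hat R:\hat G\to\hat G$ to $\sigma_d$, count its $d-1$ fixed points, and discard the one corresponding to $M$ to obtain $d-2$ co-roots. But two of your supporting steps are genuinely wrong. First, the endpoints of $M$ and the vertices of the all-critical $d$-gon $T$ do \emph{not} form a single equivalence class: in a canonical MAC lamination the guiding all-critical $d$-gon is only \emph{compatible} with the central Fatou gap (the lamination is hyperbolic, so $T$ sits inside $G$); $T$ is not a polygon of $\mathcal{L}$, and its vertices are not joined to anything by leaves. Indeed, each sibling of $M$ also touches a vertex of $T$, so if your class existed, $M$ and all its siblings would be identified and $G$ could not be a gap. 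The paper's way around this is what you actually need: $h(M)$ is fixed because the endpoints of $M$ form a class and $M$ is fixed under the first return, and no other collapsed boundary leaf of $G$ can be fixed because every boundary leaf of $G$ eventually maps to $M$ (Lemma \ref{Top1}). Similarly, your per-sector fixed-point claim cannot be right as stated: there are $d$ endcaps but only $d-1$ fixed points of a degree-$d$ circle map, and the assertion that the two endcaps adjacent to $M$ are ``covered by the root class'' is not an argument. The correct reason no co-root lies in those two endcaps is an order-preservation/counting argument in the conjugated model: the $d$ collapsed leaves $h(M),h(\ell_1),\dots,h(\ell_{d-1})$ are exactly the $\hat R$-preimages of $h(M)$, so under the conjugacy they correspond in circular order to $0,\tfrac{1}{d},\dots,\tfrac{d-1}{d}$, and the arcs $(0,\tfrac{1}{d})$ and $(\tfrac{d-1}{d},1)$ contain no fixed point $\tfrac{j}{d-1}$, while each of the remaining $d-2$ arcs contains exactly one.

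The distance bound is where your argument actually fails. Two co-roots in adjacent endcaps are \emph{not} separated by a full vertex-to-vertex arc of $T$: between them lies a sibling of $M$, whose subtended arc has length $|M|<\tfrac{1}{d}$, plus portions of the two endcaps. Writing $\alpha$ and $\beta$ for the distances of the two co-roots from the vertices of $T$ bounding their endcaps, the distance between them is exactly $\tfrac{1}{d}+(\alpha-\beta)$, so the claim $>\tfrac{1}{d}$ is equivalent to an inequality between the co-roots' positions \emph{inside} their endcaps, which neither your ``anchored at distinct vertices'' step nor Corollary \ref{cor: uni CSL} controls; as written your argument does not even yield $\ge\tfrac{1}{d}$. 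The paper obtains the spacing from the equal spacing $\tfrac{1}{d-1}$ of the fixed points of $\hat R$ in the conjugated coordinates on $\hat G$; a proof at the level of detail you are attempting would have to transfer that information back through the quotient map $h$ (using that the return map preserves circular order and maps each endcap over the long arc subtended by $M$), rather than appeal to the geometry of $T$ alone.
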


\begin{proof}
Let $G$ be the degree $d$ Fatou gap of a MAC lamination with major, $M$, and return map, $R$. Applying the quotient map $h$ to $\partial G$ takes the leaves bounding our Fatou gap to points as pictured in Figure \ref{Co-root Diagram}. Thus, $h$ takes $\partial G$ to a topological circle $\hat{G}$ with a first return map $\hat{R}$ conjugate to $\sigma_d$ (Lemma \ref{Top1}). Since $\hat{R}$ is of degree $d$, it will have $d-1$ fixed points; the first of which we can identify as $h(M)$. The rest of the fixed points under $\sigma_d$ must be equally spaced around the circle with spacing $\frac{1}{d-1}$ and are between the siblings of $M$. We claim that these points under $h^{-1}$ are points in $\partial G$ and not leaves. All of the points in $\hat{G}$ corresponding to leaves in $\partial G$ eventually map to the point $h(M)$ because all of the leaves in $\partial G$ eventually map to $M$ (Lemma \ref{Top1}). Points in $\hat{G}$ corresponding to leaves cannot be fixed under $\hat{R}$, otherwise they would not map to $h(M)$. Now, we apply $h^{-1}$ to each of these points to get points in $\partial G$ between the siblings of $M$ which are our co-roots.
\end{proof}

The following algorithm gives rise to the existence of co-roots for all unicritical laminations. 

\begin{rem}[The Generalized Lavaur's Algorithm, Section 6 of \cite{Bhattacharya:2021}]\label{thm: lavaurs} 
For all unicritical laminations of any degree, we can uniquely find the corresponding minor leaf that generates the lamination. In this process of finding these minors, the algorithm \enquote{skips} over certain points of every period. These points that are skipped over are the desired co-roots. However, the Lavaur's Algorithm identifies the minor leaf rather than the major. Thus, the \enquote{co-roots} it identifies are actually the images of the points described in the co-root definition. These points are in the same orbit so we will use the term interchangeably when no confusion will arise. We will identity a co-root with the leaf it is associated with in its orbit. 
\end{rem}

\begin{figure}
    \centering
    \includegraphics[width = 5in]{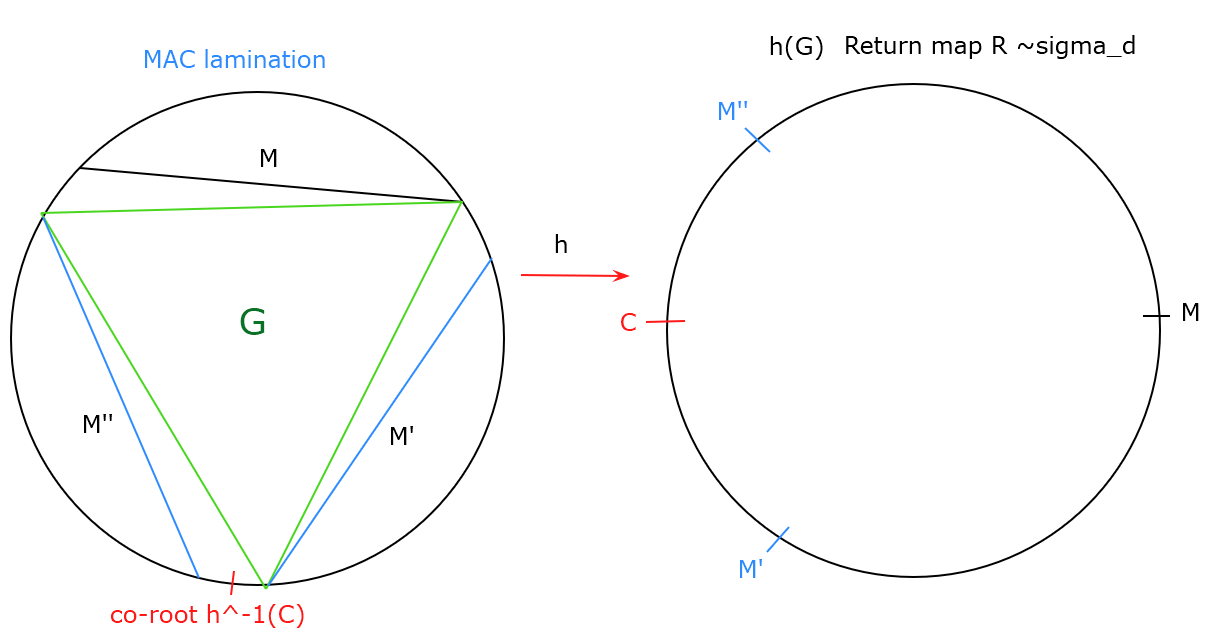}
    \caption[Co-root Diagram]{Here is a MAC leaf in a lamination bounding a Fatou gap $G$ (Left). The quotient map, $h$, applied to $G$ gives $h(G)$ (Right). In $h(G)$, the point labeled $M$ is technically $h(M)$. The return map on $h(G)$ is conjugate to $\sigma_d$.}
    \label{Co-root Diagram}
\end{figure}

\begin{lem}\label{lem: disjoint gaps} 
The forward images of the Fatou gap, $G$, in a MAC lamination are pairwise disjoint except possibly at the endpoint of $M$ or an image of $M$.

\end{lem}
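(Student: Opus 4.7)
The plan is to pass to the topological Julia set $J(\mathcal L)$ via the quotient $h$ and exploit the structure from Lemma \ref{Top1}. Let $p$ denote the period of the Fatou gap $G$, with forward images $G_0 = G, G_1, \ldots, G_{p-1}$, and corresponding Fatou domains $\hat{G}_k = h(G_k)$ in $J(\mathcal L)$; these are pairwise disjoint as open connected components of the complement of $h(\ucirc)$. Any intersection of $G_i$ and $G_j$ in the lamination therefore collapses under $h$ to a single pinch point $q^* \in \partial \hat{G}_i \cap \partial \hat{G}_j$, and it suffices to show $q^* = h(M_k)$ for some $k$; lifting back, the meeting in $\ucirc$ then occurs at an endpoint of the orbit leaf $M_k = \sigma_d^k(M)$, as claimed.

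First I would use Proposition \ref{prop: gap meet} together with the hyperbolicity of the MAC lamination to show that two distinct Fatou domains $\hat{G}_i$ and $\hat{G}_j$ share at most one boundary point in $J(\mathcal L)$. Second, given the unique pinch $q^* \in \partial \hat{G}_0 \cap \partial \hat{G}_j$ (after cyclic re-indexing so $i = 0$), applying the first return map $\hat{R} = P^p$ restricted to $\partial \hat{G}_0$ sends $q^*$ to another pinch point of $\partial \hat{G}_0 \cap \partial \hat{G}_j$; by uniqueness, $\hat{R}(q^*) = q^*$.

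Third, by Lemma \ref{Top1}(3) the map $\hat{R}$ is conjugate to $\sigma_d$, so its fixed points on $\partial \hat{G}_0$ are $h(M)$ together with the $d - 2$ co-roots (Theorem \ref{thm: co-root}). Co-roots lie in the open interior of an end cap arc not adjacent to $M$ and are not endpoints of any boundary leaf of $G$, so by Proposition \ref{prop: gap meet} they cannot serve as meeting points between $G_0$ and another Fatou gap (a shared-point meeting requires a common leaf vertex). Hence $q^* = h(M)$, and the meeting between $G_0$ and $G_j$ in the lamination occurs at endpoints of $M = M_0$. Applying $P^i$ then shows that the meeting between $G_i$ and $G_{i+j}$ occurs at endpoints of $M_i$, an image of $M$.

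The main obstacle I anticipate is the uniqueness of pinch points between distinct Fatou domains. This requires a careful argument using condition (4) of Definition \ref{def: Sibling invaritant lam} (finite equivalence classes) and the tree-like structure of the dual graph of Fatou components in the topological Julia set: two distinct pinch points between $\hat{G}_i$ and $\hat{G}_j$ would produce a simple closed loop passing through both gap closures that cannot be contracted in $J(\mathcal L)$ for a hyperbolic unicritical lamination. Once uniqueness is in hand, the remaining steps are combinatorial book-keeping on the dynamics of $\hat{R}$.
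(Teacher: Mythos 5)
Your route through the topological Julia set and the fixed points of $\hat{R}$ is genuinely different from the paper's (which uses the Unicritical Central Strip Lemma, Corollary \ref{cor: uni CSL}, plus a return-time argument), but as written it does not prove the full statement. The decisive gap is that you never exclude the possibility that two gaps of the orbit share an \emph{entire} boundary leaf. Proposition \ref{prop: gap meet} allows two kinds of meeting: a common leaf, or a single common endpoint of leaves. If $G_i$ and $G_j$ shared the leaf $M$ (or an image of $M$), every step of your argument would still go through: the shared leaf collapses under $h$ to a single point, that point is fixed under $\hat{R}$, and it equals $h(M)$. Your fixed-point analysis therefore cannot distinguish ``meet at an endpoint of $M$'' from ``share the side $M$,'' yet the lemma asserts the intersection is at most an endpoint of $M$ or of an image of $M$, and this stronger conclusion is exactly what is used later: Theorem \ref{thm: MAC Gaps} cites Lemma \ref{lem: disjoint gaps} precisely to forbid $G$ from sharing a side with one of its forward images. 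The paper devotes the second half of its proof to this case: it first shows (via Corollary \ref{cor: uni CSL}) that no gap in the forward orbit carries two leaves of the MAC orbit on its boundary, and then shows that a shared leaf would force a forward image of $M$ onto $\partial G$, a contradiction. Your proposal needs an analogous argument; concluding ``any shared leaf would have to be $M$'' is not a contradiction by itself.

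The step you flag as the main obstacle is also mis-founded as stated, though it is repairable. Proposition \ref{prop: gap meet} controls how two gaps meet in $\cl\disk$; it says nothing about how the closures $\hat{G}_i$, $\hat{G}_j$ meet in $J(\mathcal{L})$, and in general quotient closures of two gaps can share a point even when the gaps are disjoint in the disk (the three Fatou gaps of the Douady rabbit are pairwise disjoint in $\cl\disk$, but their images pairwise touch at the image of the central triangle). So ``at most one common boundary point in $J(\mathcal{L})$'' does not follow from that proposition plus hyperbolicity, and the loop/tree-like sketch is not a proof. Fortunately you do not need the quotient statement at all: work in the disk. If $G_i\cap G_j\neq\emptyset$, then by Proposition \ref{prop: gap meet} the intersection is a single point or a single leaf; since $\sigma_d^p$ maps each gap of the cycle onto itself, it maps $G_i\cap G_j$ into itself, so the intersection is a fixed point of the return map (or a leaf whose $h$-image is fixed by $\hat{R}$), and your co-root argument --- which is fine, since the proof of Theorem \ref{thm: co-root} shows the co-root classes contain no boundary leaves of $G$ --- then pins the meeting to the class of $M$. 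With that replacement, only the shared-leaf exclusion from the first paragraph remains to be supplied.
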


\begin{proof}
Given a MAC lamination with major, $M$, and Fatou Gap, $G$, along with their respective forward images, want to show the forward images of $G$ are pairwise disjoint except possibly at endpoints of $M$ or its images.

First, we need to show at no point in the forward orbit of $G$ that a gap has two leaves of the MAC orbit in its boundary.  By way of contradiction, assume at some iterate $k$ in the forward orbit of $G$ there were two leaves of the MAC orbit in the boundary of $G_k$. The forward image, $G_{k+1}$, of $G_k$ will also have two leaves of the MAC orbit in its boundary since the gap will map forward one to one until it returns to $G$. When we return to $G$, it will have two leaves in the MAC orbit in its boundary, one of which being $M$. The other leaf, $\ell$, maps into the central strip formed by $M$ and its siblings. The leaf, $\ell$, cannot enter the central strip long because it will be closer to critical than $M$ which contradicts the fact that $M$ was our major leaf. Thus, $\ell$ must enter the central strip short which implies that both endpoints of the leaf are in the same component of the central strip. However, this contradicts Corollary \ref{cor: uni CSL}. Therefore, the forward orbit of $G$ only has one leaf in the MAC orbit in its boundary which is $M$ or an image of $M$ depending on the iterate of $G$.

Second, we need to show that gaps in the forward orbit of $G$ cannot meet at a leaf. By way of contradiction, assume that $G_i$ and $G_k$ with $i<k<n$ meet a leaf where $n$ is the period of $M$. All of the leaves in the boundary of $G$ and its images eventually map into the MAC orbit (Lemma \ref{Top1}). The leaf shared between the two iterates, $G_i$ and $G_k$, will eventually become $M$. Then, $G_k$ will map to $G$ in $n-k$ steps, and $\sigma^{n-k}(G_i)$ will only be attached to $G$ by $M$. Then, $\sigma^{n-k}(G_i)$ will map to $G$ in $k$ steps before $G = \sigma^{n-k}(G_k)$ will return to $G$ since $k<n$. Since $M$ returns to $G$ in fewer than $n$ steps, a forward image of $M$ must be in $G$. However, this contradicts the results of the previous paragraph.

In the rotational case, a forward image of $G$ will meet $G$ at an endpoint of $M$ since the rotational MAC leaf will touch two of its forward images. 
\end{proof}

\begin{cor} 
In the identity return case, the forward images of $G$ are always pairwise disjoint.
\end{cor}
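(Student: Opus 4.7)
The plan is to combine Lemma \ref{lem: disjoint gaps} with the defining structure of identity return orbits. Lemma \ref{lem: disjoint gaps} already reduces the problem: any two distinct forward images $G_i$ and $G_j$ of $G$ (with $0 \le i < j < n$, where $n$ is the period of $M$) can only meet at an endpoint of $M$ or of some image of $M$. My only remaining task is therefore to rule out such a meeting in the identity return setting.

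First, I would recall from the discussion following Definition \ref{identity return poly} that, when $M$ is an identity return leaf, its two endpoints lie in distinct $\sigma_d$-orbits, each of period $n$. Next, I would extract from the proof of Lemma \ref{lem: disjoint gaps} the fact that each $G_k$ has exactly one leaf of the MAC orbit in its boundary, namely $M_k := \sigma_d^k(M)$. Consequently, if $p \in G_i \cap G_j$ and $p$ is an endpoint of some image of $M$, then $p$ must be a common endpoint of both $M_i$ and $M_j$.

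Finally, I would derive a contradiction by a short orbit argument. Writing $M = \cl{ab}$, the existence of such a common endpoint forces $\sigma_d^i(\{a,b\}) \cap \sigma_d^j(\{a,b\}) \neq \emptyset$. A same-letter match $\sigma_d^i(a) = \sigma_d^j(a)$ (or the analogous $b,b$ match) would force $i \equiv j \pmod{n}$, hence $M_i = M_j$, contradicting $i < j < n$. The only remaining possibility is the cross match $\sigma_d^i(a) = \sigma_d^j(b)$, but this places $a$ and $b$ in a single $\sigma_d$-orbit, contradicting the identity return hypothesis.

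The main subtlety I anticipate is cleanly contrasting this with the rotational case, where the endpoints of a rotational MAC leaf necessarily share an orbit; this is precisely what makes the exception clause in Lemma \ref{lem: disjoint gaps} unavoidable there and allows consecutive forward images of $G$ to touch. Once this orbit-level asymmetry between identity return and rotational cases is made explicit, the corollary follows immediately from Lemma \ref{lem: disjoint gaps}.
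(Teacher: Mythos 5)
Your final orbit argument (same-letter match forces $n \mid (j-i)$; cross match forces the two endpoints of $M$ into one orbit, contradicting the observation after Definition \ref{identity return poly}) is correct, and it is a genuinely different closing move from the paper's. But there is a gap in the reduction step that the whole argument hinges on: from Lemma \ref{lem: disjoint gaps} you only get that a meeting point $p$ of $G_i$ and $G_j$ is an endpoint of \emph{some} image $M_l$ of $M$; you then assert that $p$ ``must be a common endpoint of both $M_i$ and $M_j$,'' justified by the fact that $M_k$ is the unique MAC-orbit leaf in $\partial G_k$. That justification does not do the work. Proposition \ref{prop: gap meet} only tells you that $p$ is a shared endpoint of \emph{some} boundary leaf of $G_i$ and \emph{some} boundary leaf of $G_j$; by Lemma \ref{Top1} those boundary leaves are pre-images of the MAC orbit, but they need not themselves lie in the MAC orbit. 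So the unhandled scenario is: $p$ is an endpoint of $M_l$, while the leaf of $\partial G_j$ (or $\partial G_i$) through $p$ is a strict pre-image of the MAC orbit, in which case your endpoint-orbit computation never gets off the ground.

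Closing that hole essentially requires the forward-mapping argument the paper uses directly: the two boundary leaves attached at $p$ remain attached under iteration, each eventually maps into the MAC orbit (which is forward invariant), so eventually two leaves of the identity return orbit share a point, contradicting the pairwise disjointness required by Definition \ref{identity return poly}. The paper bypasses Lemma \ref{lem: disjoint gaps}'s exception clause entirely and argues this way from Proposition \ref{prop: gap meet} and Lemma \ref{Top1}; once you add that step, your period/orbit computation becomes an unneeded (though pleasant) refinement for the special case where the meeting leaves really are $M_i$ and $M_j$. As written, though, the reduction to that special case is unjustified, so the proof is incomplete.
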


\begin{proof}
We will verify that no two gaps in the forward orbit of $G$ share a point. Note that by Proposition \ref{prop: gap meet}, we know two gaps cannot meet at a point that is not an endpoint of leaves. Suppose by way of contradiction, there is a boundary leaf in $G_i$ that shares an endpoint with a boundary leaf in $G_k$. Then, we have two leaves that will eventually map into the MAC orbit attached at a point. The forward images of these two leaves must also be attached. However, an identity return MAC leaf and its images must be disjoint, thus a contradiction.
\end{proof}

We currently have enough information to prove one direction (MAC to SCM) of this one-to-one correspondence. We want to show, given any degree $d$ MAC lamination, there is a unique (canonical) SCM lamination to which it corresponds. 

\begin{figure}
    \centering
    \includegraphics[width = 1.9in]{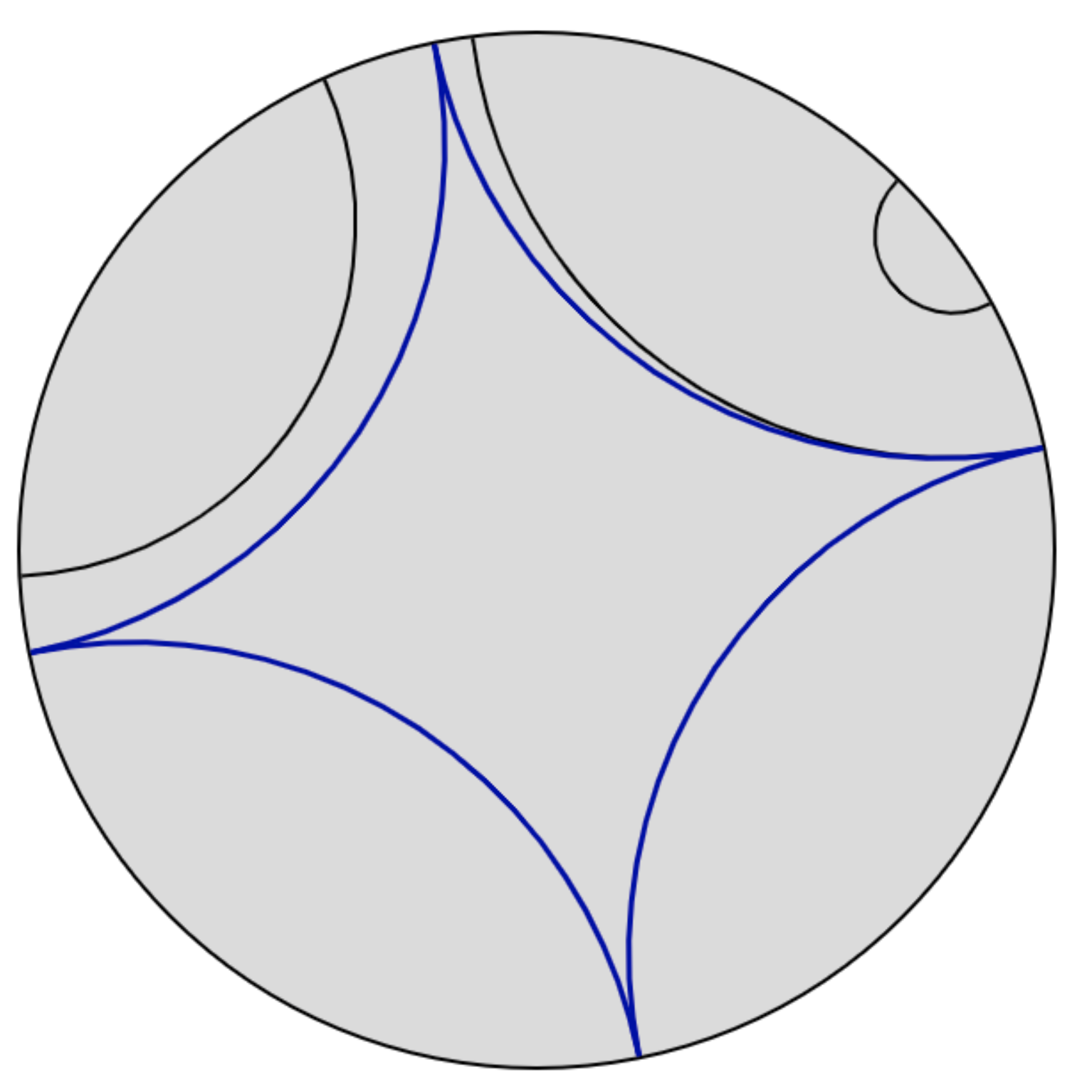}
    \includegraphics[width = 1.9in]{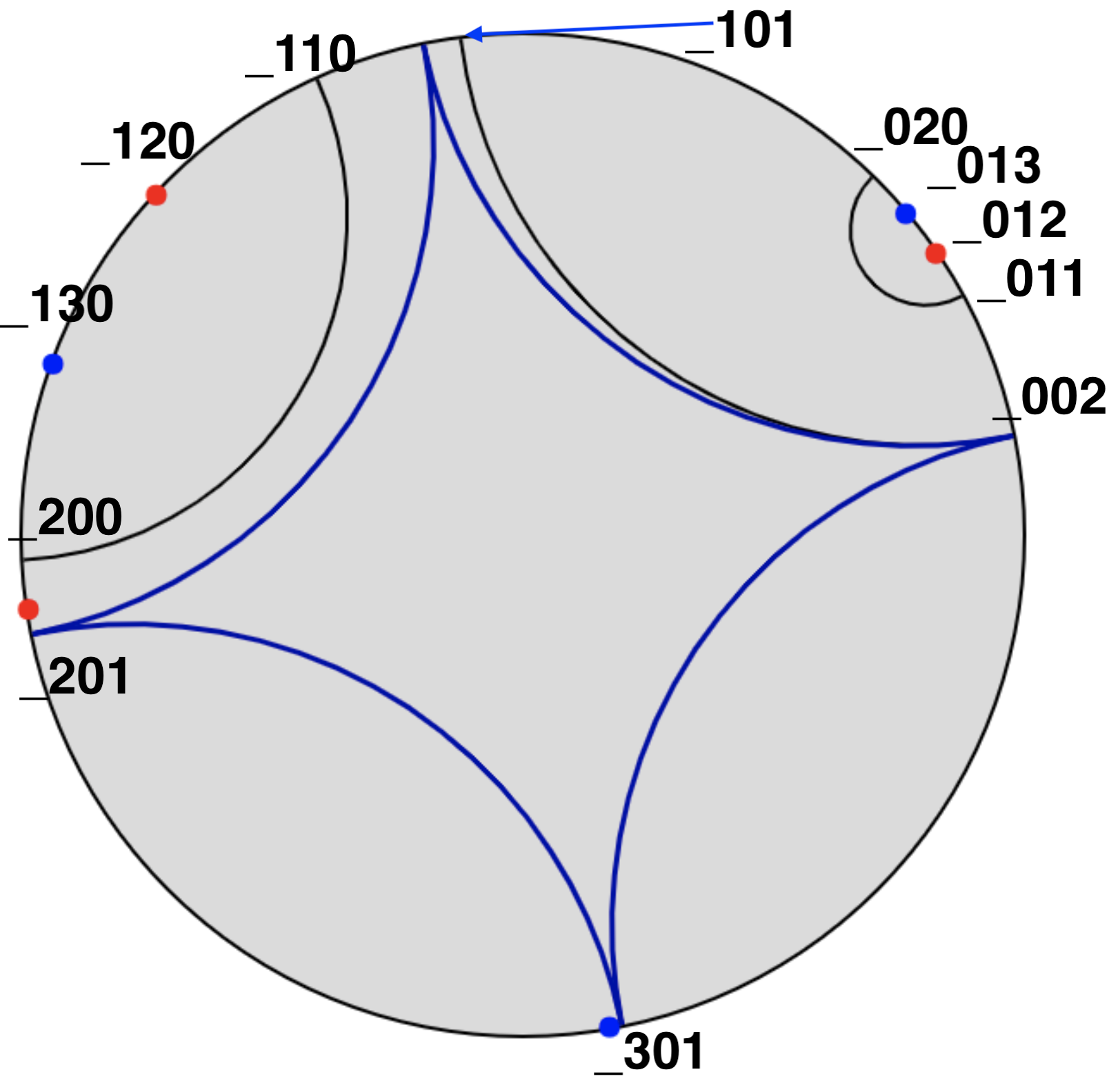}
    \includegraphics[width = 1.9in]{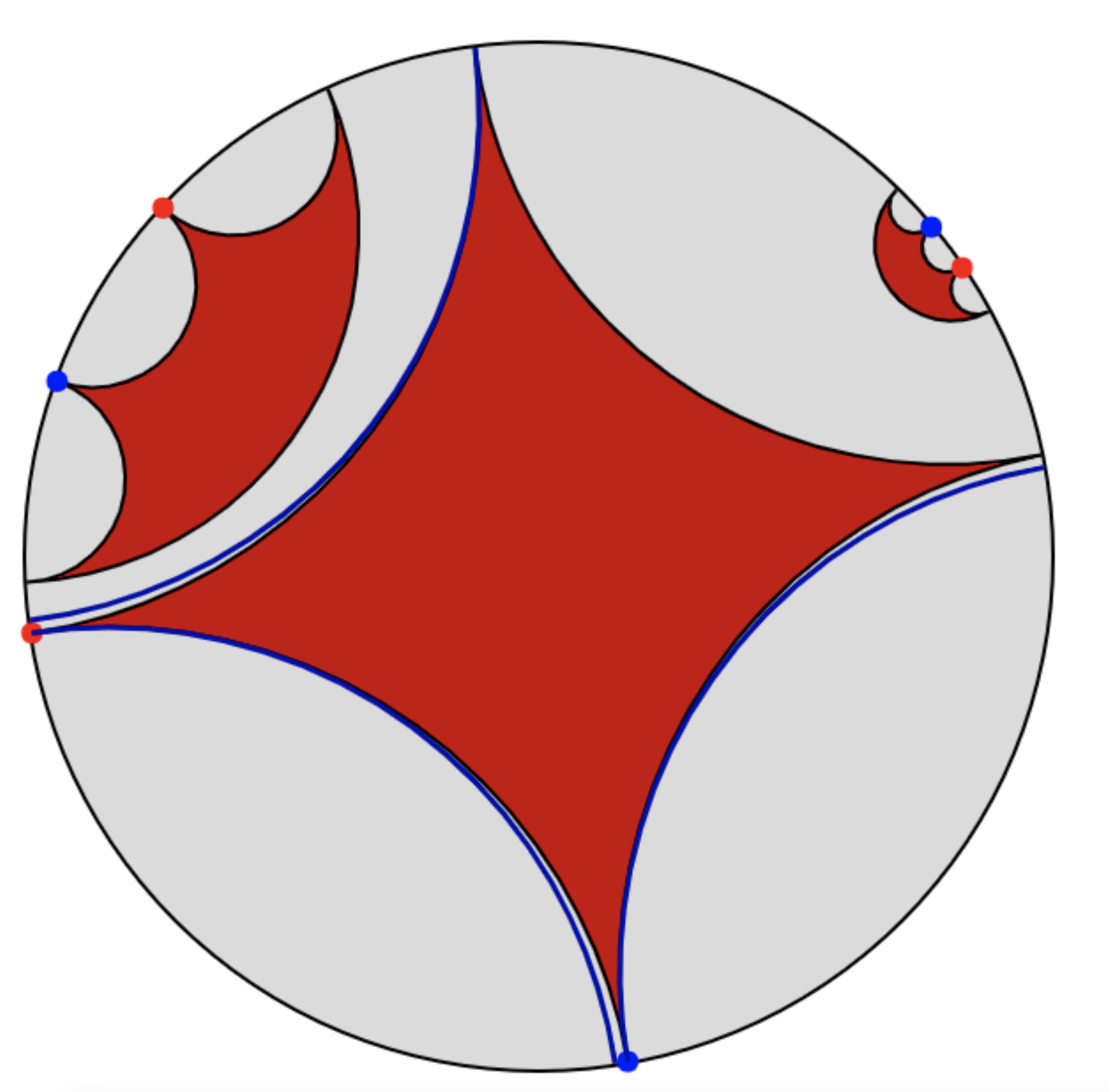}
    \caption[Identity Return Case]{On the left, we have a degree 4 MAC identity return leaf and its full forward orbit. In the middle, we have the co-roots and their images added. On the right, we have the resulting SCM polygon.}
    \label{ir mac to scm}
\end{figure}

\begin{figure}
    \centering
    \includegraphics[width = 1.9in]{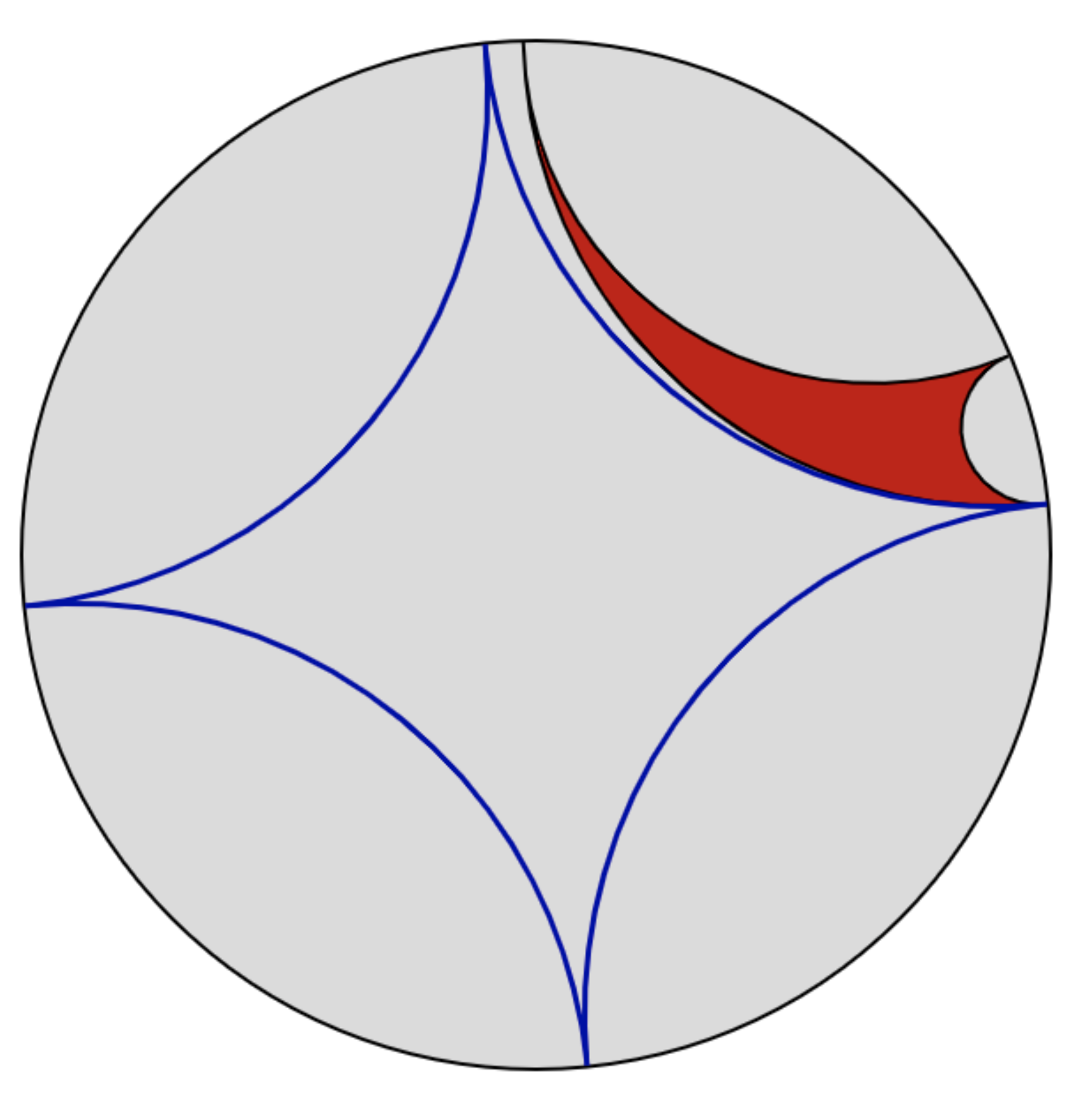}
    \includegraphics[width = 1.9in]{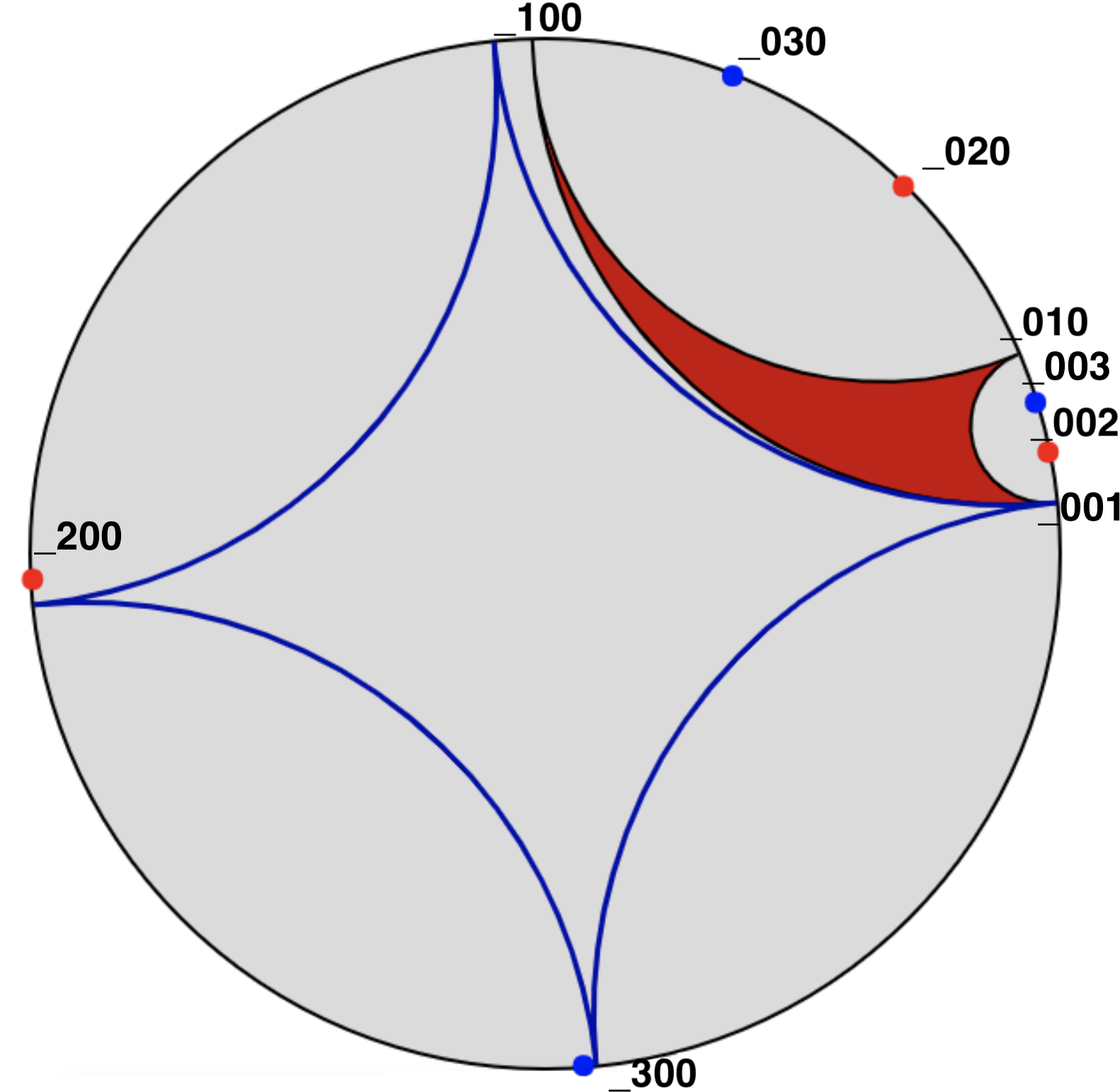}
    \includegraphics[width = 1.9in]{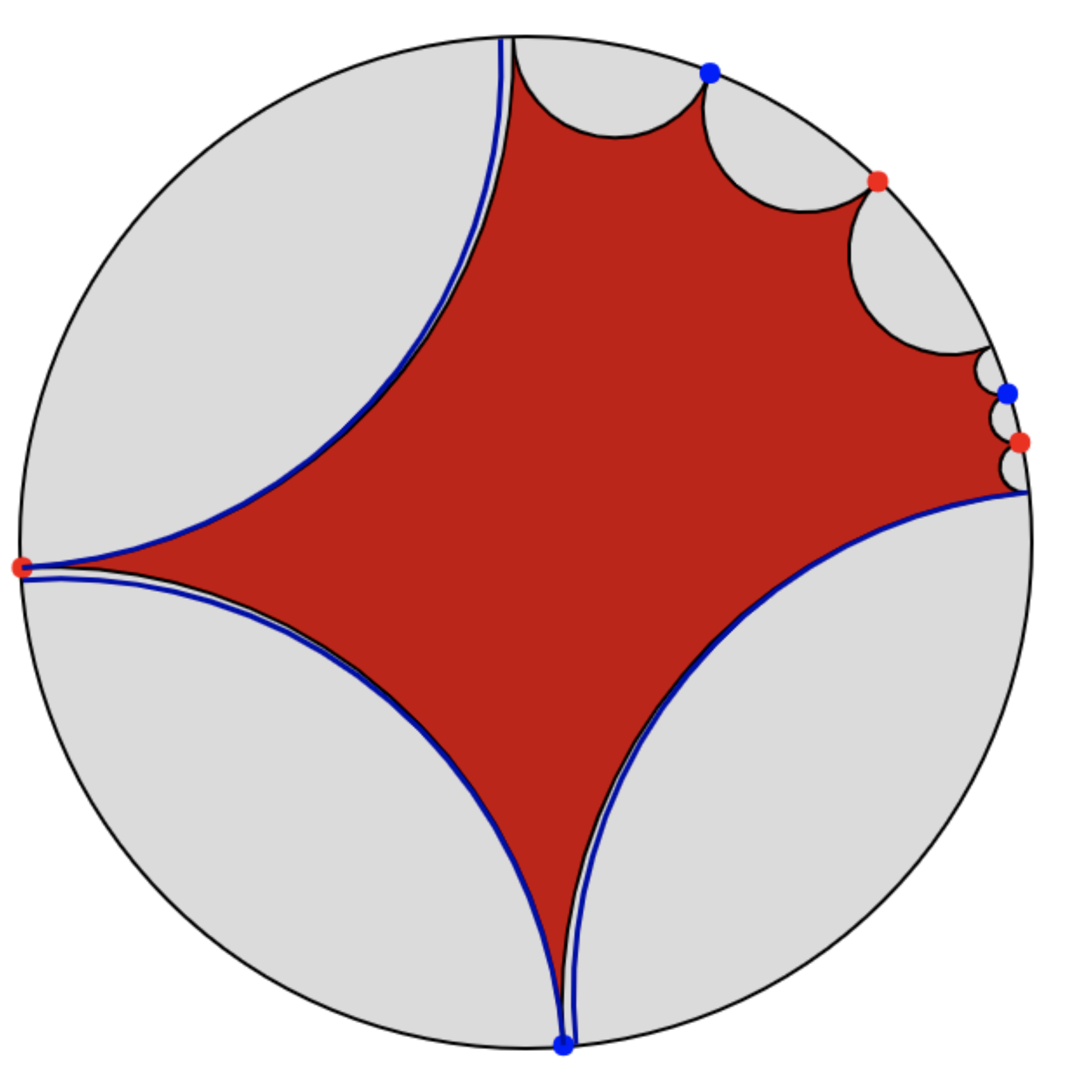}
    \caption[Rotational Case]{On the left, we have a degree 4 MAC rotational polygon. In the middle, we have the co-roots and their images added. On the right, we have the resulting SCM polygon.}
    \label{rot mac to scm}
\end{figure}

\begin{lem}[MAC to SCM] \label{lem: MAC to SCM}
Starting with a MAC leaf, $M$, there is a unique corresponding SCM polygon, $P(M)$, with adjacent majors. In the identity return case, we will have a $d$-gon and a $k(d-1)$-gon in the rotational case.
\end{lem}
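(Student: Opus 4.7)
The plan is to build $P(M)$ directly from the $d-2$ co-roots produced by Theorem \ref{thm: co-root}. In the identity return case, the MAC leaf $M$ lies on the boundary of the central degree $d$ Fatou gap $G$ of $\mathcal{L}(M)$, and Theorem \ref{thm: co-root} places $d-2$ co-roots on $\partial G \cap \ucirc$ in the endcaps of the central strip not adjacent to $M$. I would define $P(M)$ to be the convex hull of the two endpoints of $M$ together with these $d-2$ co-roots, giving a $d$-gon. In the rotational case, the MAC object is a rotational $k$-gon $T$, and each of its $k$ sides bounds a Fatou gap in the orbit of $G$; these gaps are pairwise disjoint except possibly at vertices of $T$ by Lemma \ref{lem: disjoint gaps}, and each contributes $d-2$ co-roots. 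I would define $P(M)$ to be the convex hull of the $k$ vertices of $T$ together with all $k(d-2)$ co-roots, giving a polygon with $k + k(d-2) = k(d-1)$ vertices.

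\textbf{Identifying the majors.} I would next verify that $P(M)$ has the claimed side structure. In the identity return case, the $d$ vertices in cyclic order produce $d-1$ ``new'' sides across $G$ (connecting the two endpoints of $M$ to the successive co-roots and successive co-roots to one another) plus the one ``old'' side $M$ itself. By Theorem \ref{thm: co-root}, consecutive co-roots on $\partial G \cap \ucirc$ are at arc distance greater than $\tfrac{1}{d}$, and a parallel estimate holds for the chord from an endpoint of $M$ to its nearest co-root, so all $d-1$ new sides are majors. These $d-1$ majors are consecutive around $P(M)$ by construction, and $|M| < \tfrac{1}{d}$ because the $d$ siblings of $M$ must fit around the all critical $d$-gon. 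In the rotational case, the analogous block of $d-1$ majors occurs across $G$ itself; the other $(k-1)(d-1)$ sides span Fatou gaps adjacent to shorter sides of $T$, so their lengths fall below critical length and only the block across $G$ furnishes the required $d-1$ consecutive majors.

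\textbf{SCM conditions and uniqueness.} Finally, I would verify the two conditions of Definition \ref{def:SCM}. At the iterate when the $d-1$ majors across $G$ are realized, $P(M)$ lies inside the maximal critical sector bounded by $d-1$ critical chords attached at the vertices of the all critical $d$-gon that are endpoints of majors of $P(M)$, yielding condition (1). By Proposition \ref{prop: MAC orbit Location} together with Corollary \ref{cor: uni CSL}, no iterate of any side of $P(M)$ can re-enter the central strip with both endpoints in one endcap, so no side approaches criticality more closely in a different sector, yielding condition (2). Uniqueness of $P(M)$ follows from the uniqueness of the co-roots (Theorem \ref{thm: co-root}) and of the canonical MAC lamination (Definition \ref{def: MAC lam}). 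The main obstacle is verifying in the rotational case that exactly $d-1$ sides qualify as consecutive majors; this requires ruling out that chords across Fatou gaps adjacent to short sides of $T$ accidentally exceed critical length, which rests on the maximality of $M$ in its orbit together with the leaf-length control of Proposition \ref{prop: leaf growth}.
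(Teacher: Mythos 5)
Your construction is the same as the paper's: $P(M)$ is built on the endpoints of $M$ (respectively the vertices of the rotational polygon) together with the co-roots supplied by Theorem \ref{thm: co-root}, the $d-1$ consecutive majors come from the co-root spacing, and uniqueness comes from the construction. The genuine gap is in your verification of condition (2) of Definition \ref{def:SCM}. Proposition \ref{prop: MAC orbit Location} and Corollary \ref{cor: uni CSL} are statements about the forward orbit of $M$, the leaf whose siblings bound the central strip; the $d-1$ new sides of $P(M)$ are not leaves of $\mathcal{L}(M)$ and are not in the orbit of $M$, so neither result applies to them as quoted. Moreover, even granting that no iterate of a side re-enters the central strip with both endpoints in one endcap, that does not rule out a closer approach to criticality: an iterate could have length close to $\frac{1}{d}$ while lying entirely outside the central strip, for instance in a cap under a sibling of $M$. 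The argument the paper actually uses is a trapping argument: $P(M)\subset G$, the forward images of $P(M)$ travel inside the forward images of $G$, which stay under the short arcs of the corresponding images of $M$ until the return; since the MAC orbit lies outside the central gap in caps of arc length $|M|<\frac{1}{d}$, every side of every intermediate image of $P(M)$ is bounded in length by the corresponding image of $M$, and since $M$ is the leaf of its orbit closest to critical length, no side can approach criticality until the return moment, when the $d-1$ long sides each exceed $\frac{1}{d}$. Without this containment step your condition (2) is unsupported.

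Two smaller points. First, the maximal critical sector in condition (1) is bounded by a \emph{new} collection of $d-1$ critical chords placed under the $d-1$ long sides of $P(M)$ (possible precisely because each such side is longer than $\frac{1}{d}$), as in Definition \ref{def: SCM lam}; the all critical $d$-gon of the MAC lamination is discarded, and its vertices are in general not vertices of $P(M)$, so your description of the chords as ``attached at the vertices of the all critical $d$-gon that are endpoints of majors of $P(M)$'' does not describe a valid critical portrait. Second, Definition \ref{def:SCM} requires $P(M)$ to be a periodic return polygon, so you should record that the co-roots, like the endpoints of $M$, are fixed under the first return map of $G$ (Lemma \ref{Top1}), that $P(M)\subset G$ maps forward preserving circular order, and that $P(M)$ is identity return (respectively rotational with the rotation number of $M$) because $M$ is; this verification is in the paper's proof and is missing from yours.
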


\begin{proof}

Let $\mathcal{L}$ be MAC lamination with minor leaf, $m$, and major leaf, $M$. By Theorem~\ref{thm: co-root} we have the following statements. There are $d-2$ co-roots associated with $M$. The co-roots of the major always lie on the long arc of the circle subtended by $M$. The siblings of $M$ are symmetric about the circle lying in different critical sectors. These siblings have endpoints on the long arc of the circle subtended by $M$. The siblings are between either an endpoint of the major and a co-root or between two co-roots.

Let us first discuss the identity return case. We can construct a $d$-gon, $P(M)$, using $M$ and its co-roots. We will remove the current critical chords and justify the existence of a new complete set of compatible $d-1$ critical chords. We consider the short arc subtended by $M$. Along that arc, one endpoint is counterclockwise of the other. We chose that as our starting point (see Figure \ref{cw endpoint of M}). Now, take this point and connect it to the nearest co-root in the counterclockwise direction with a leaf. Doing so gives us two connected leaves, or two sides of our $d$-gon, $P(M)$, we want to construct. There are now $d-3$ co-roots remaining and the other endpoint of the major to connect together. Proceeding in the counterclockwise direction, we connect the rest of the points in similar fashion until we arrive at the other endpoint of $M$ giving us an additional $d-2$ sides. This gives a total of $d$ sides making a $d$-gon with $M$ as a side and the co-roots as vertices. 

The new leaves connecting the endpoints and co-roots are longer than the siblings of the major; this follows from the fact that they're closer to critical because they're in the end cap. By Theorem~\ref{thm: co-root} we know that the distance between co-roots is greater than $\frac{1}{d}$. Since these new sides are all longer than critical length, we know that there is room for a critical chord under $d-1$ of the sides of our new $d$-gon, $P(M)$, putting it into a maximal critical sector. 

We will now show that $P(M)$ is SCM with the same period as $M$. $M$ and its co-roots have the same period since they are fixed under the first return map of the gap, $G$, for which $M$ is the major (Lemma \ref{Top1}). So, all of the vertices of $P(M)$ have the same period. Since $P(M)$ lies entirely in $G$, and $G$ maps forward preserving circular order, the vertices, and thus, the sides of $P(M)$ map forward in order.

Since $M$ is identity return and a side of $P(M)$, then $P(M)$ must also be identity return. To verify that $P(M)$ is the closest to critical in its orbit we look at its side lengths compared to the side lengths of its forward images. Since $P(M)$ is contained in $G$ it will map with the forward orbit of $G$. This forward orbit follows the forward orbit of $M$ so that $G$ maps immediately under the image of $M$ and stays on the short arc of the images of $M$ until returning to $M$.

Note that the shortest side of $P(M)$ is $M$ because the other leaves in $P(M)$ are closer to critical that $M$. 

Thus, when mapping forward under $\sigma_d$, $M$ will be the longest side of the forward images of $P(M)$ until returning to $P(M)$. The sides of the images of $P(M)$ are bounded under the corresponding images of $M$. The sides of $P(M)$ will not be able to approach criticality until $M$ does, and this is the iterate with $M$ as a side. Therefore $P(M)$ is the closest to critical in its orbit. Now we have $P(M)$ is identity return, in a maximal critical sector, and is the closest to critical in its forward orbit. Thus, $P(M)$ is an SCM $d$-gon by Definition \ref{def:SCM}. The construction guarantees its uniqueness and completes the identity return case.

Now, let us consider the rotational case. Again, we will remove the current critical chords and put in compatible ones with the new polygon. We construct a $k(d-1)$-gon using $M$ and its co-roots where $k$ is the period of the endpoints of the rotation return polygon. We choose the endpoint of $M$ as before. We connect it and the nearest co-root in the counterclockwise direction. This is one leaf of our $k(d-1)$-gon. There are now $d-3$ co-roots remaining and the other endpoint of the major to connect together. Proceeding in the clockwise direction we connect the rest of the points in similar fashion until we arrive at the other endpoint of $M$. This creates $d-1$ new leaves that subtend $M$. We maintain the endpoints of $M$, but the original leaf has been replaced by the leaves generated by the co-roots and the endpoints of $M$.
As we repeat this process for the other $k$ sides of the rotational polygon and the corresponding co-root images, we have a total of $k(d-1)$ sides of our new polygon. The original $k$ sides of rotational polygon have each been replaced by $d-1$ new sides. 

Similarly to the identity return case, we have adjacent majors and our $k(d-1)$-gon is in a maximal critical sector so we have room for $d-1$ compatible critical chords. Our polygon is the same period as $M$ and rotational with the same rotation number as $M$. Thus, $P(M)$ is a rotational SCM $k(d-1)$-gon by Definition \ref{def:SCM}. The construction again guarantees its uniqueness and completes the rotational case.

\begin{figure}
    \centering
    \includegraphics[width = 3in]{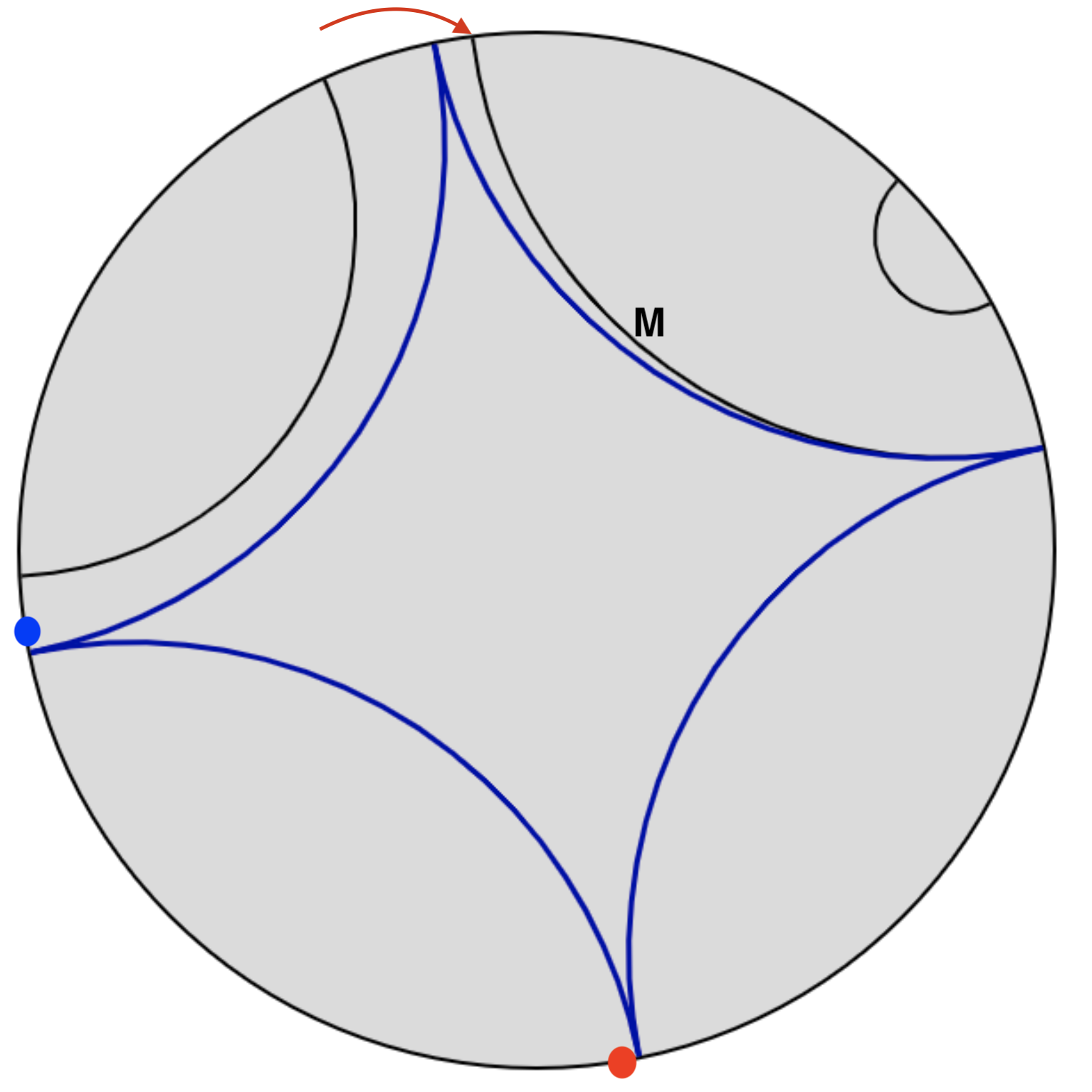}
    \caption{The endpoint of $M$ at the clockwise end of the short arc subtended by $M$ in the identity return case with the two respective co-root points.}
    \label{cw endpoint of M}
\end{figure}


\end{proof}

\begin{figure}
    \centering
    \includegraphics[width = 2in]{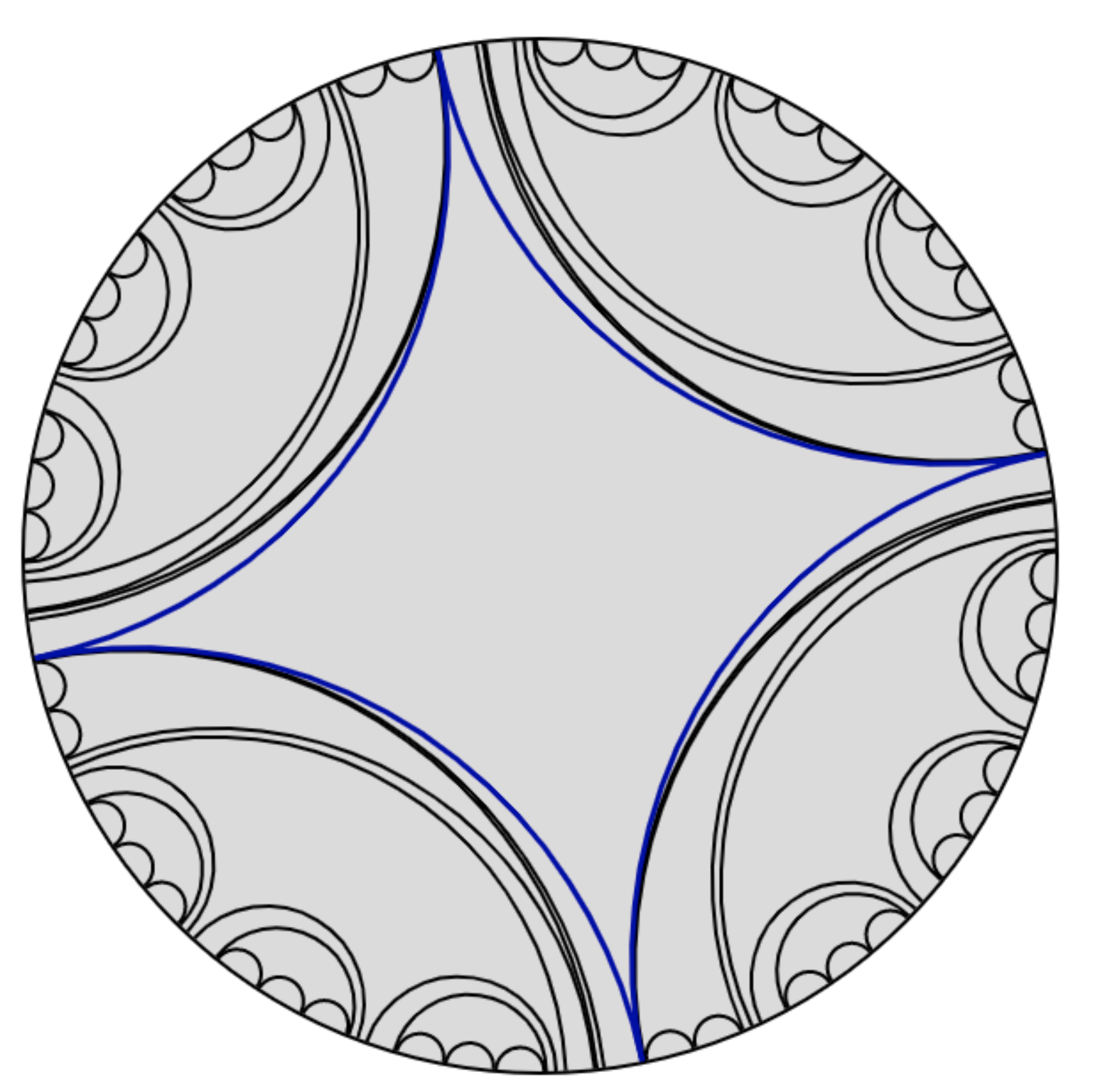}
     \hspace{.3 in}
    \includegraphics[width = 2in]{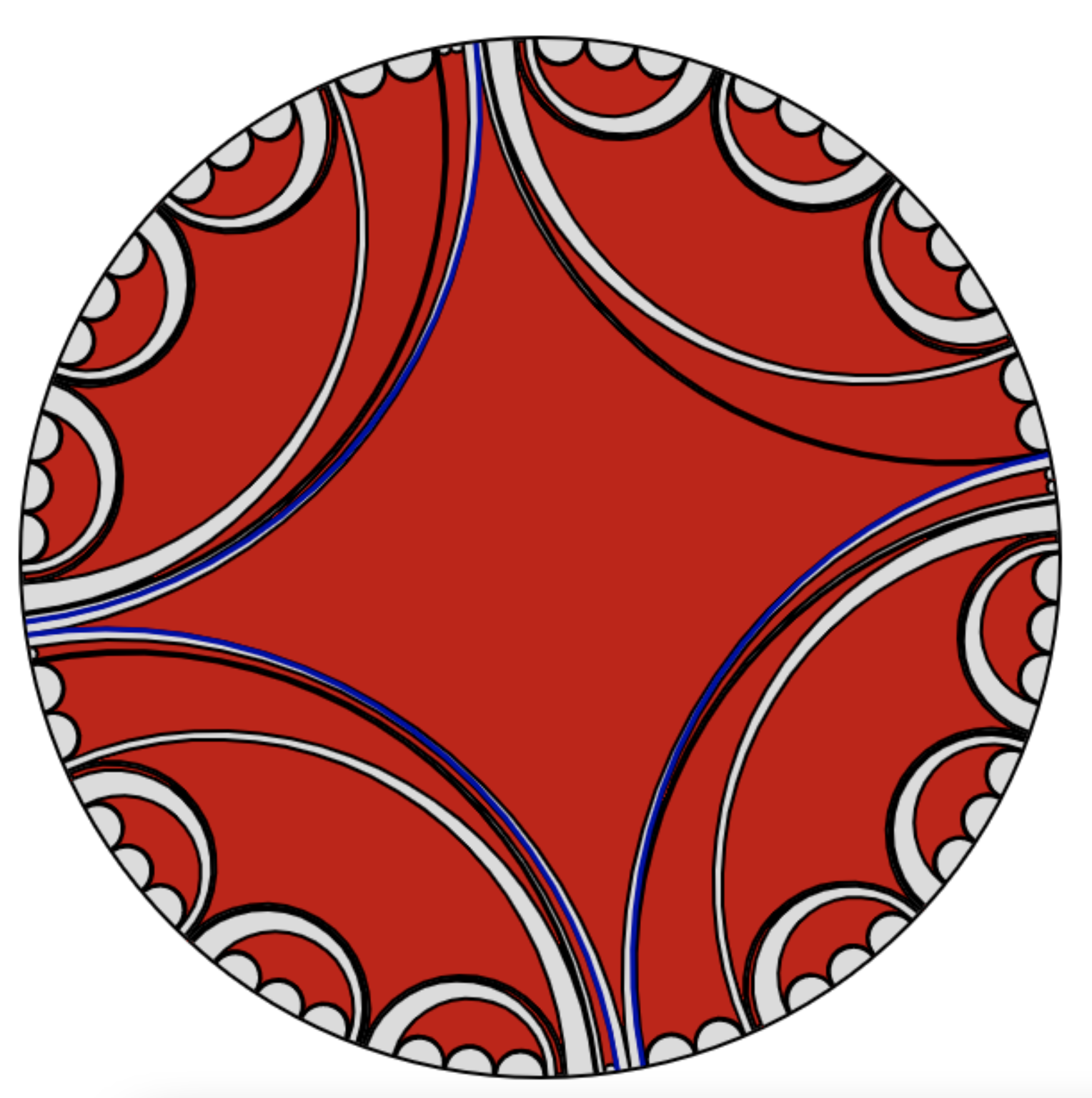}
   
    \caption[Canonical SCM Lamination]{
    Here are the finite pullbacks for Figure \ref{ir mac to scm}. On the left, we have pullbacks of our identity return unicritical leaf, and on the right, we have pullbacks for our SCM identity return polygon.}
    \label{fig:canonical ir SCM}
\end{figure}

\begin{figure}
    \centering
    \includegraphics[width = 2in]{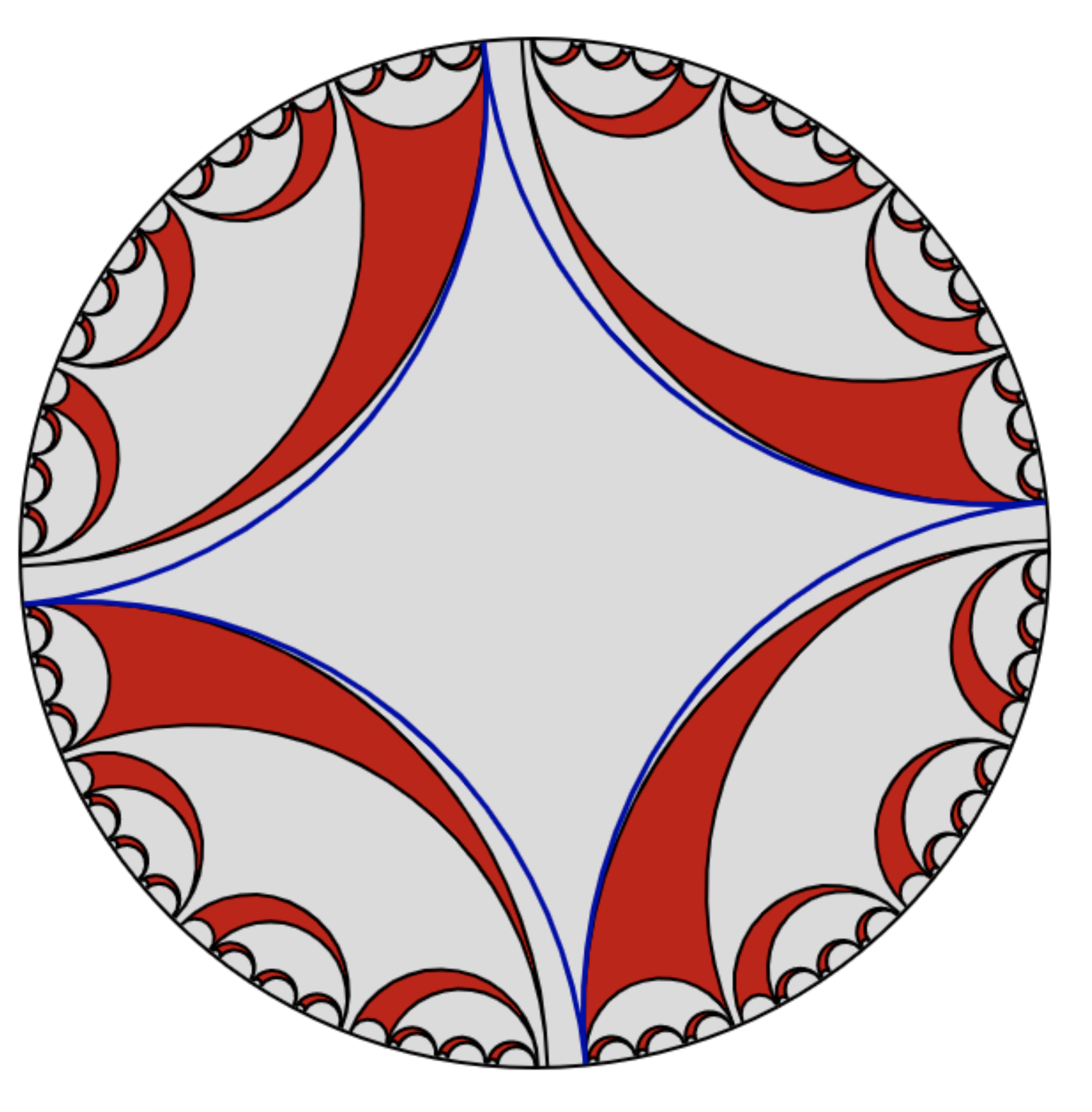}
     \hspace{.3 in}
    \includegraphics[width = 2in]{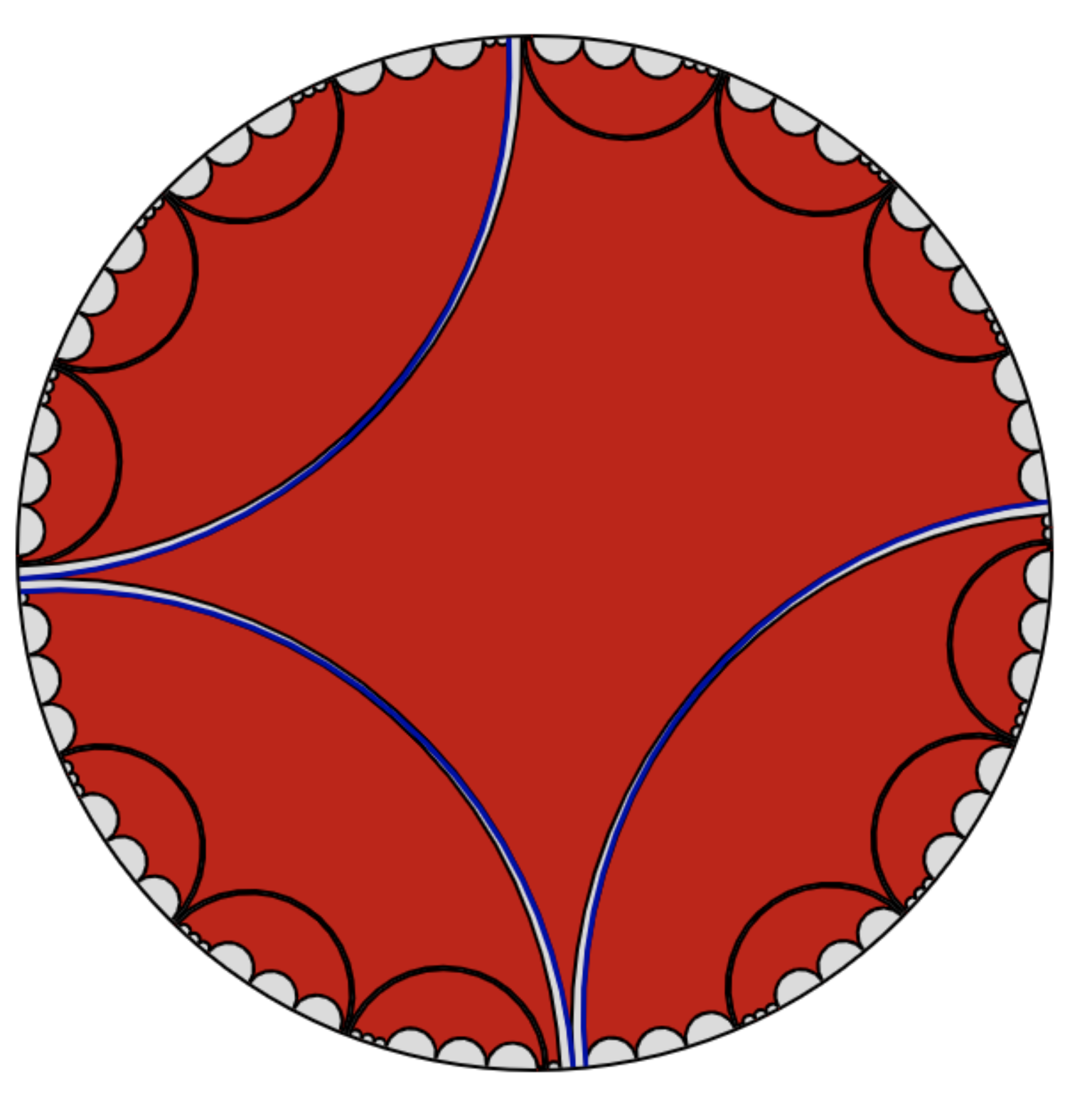}
   
    \caption[Canonical SCM Lamination]{
    Here are the finite pullbacks for Figure \ref{rot mac to scm}. On the left, we have pullbacks of our rotational unicritical polygon, and on the right, we have pullbacks for our SCM rotational polygon.}
    \label{fig:canonical rot SCM}
\end{figure}

\begin{thm}[MAC lamination to SCM lamination] \label{thm: MAC to SCM Lamination}
Let $\mathcal{L}(M)$ be a MAC lamination with the MAC leaf $M$. There is a canonical SCM lamination $\mathcal{S}(M)$ that contains an SCM polygon $P(M)$ as follows:

\begin{enumerate}
    \item Identity Return: SCM $d$-gon with $M$ as one of its sides.
    
    \item Rotational: SCM $k(d-1)$-gon where $M$ is a chord interior to the polygon.

\end{enumerate}

\end{thm}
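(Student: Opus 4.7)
\noindent
The plan is to combine Lemma \ref{lem: MAC to SCM} with the canonical pullback construction of Definition \ref{def: SCM lam} and the Pullback Scheme theorem. First, given the MAC leaf $M$ of $\mathcal{L}(M)$, I would invoke Lemma \ref{lem: MAC to SCM} to obtain the unique polygon $P(M)$ together with its forward orbit: in the identity return case, an SCM $d$-gon having $M$ as a side; in the rotational case, an SCM $k(d-1)$-gon in which $M$ is a chord joining two vertices of the polygon but sits in its interior (since the co-roots of $M$ become vertices between the endpoints of $M$ on the long arc). In both cases the construction in Lemma \ref{lem: MAC to SCM} already records that $P(M)$ has $d-1$ consecutive sides longer than critical length $\tfrac{1}{d}$, so we are in a position to apply Definition \ref{def: SCM lam}.

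Next I would attach the collection $\mathcal{C}$ of $d-1$ guiding critical chords, one touching an endpoint of each of the $d-1$ adjacent major sides of $P(M)$. The existence of $\mathcal{C}$ follows because consecutive co-roots are separated by more than $\tfrac{1}{d}$ by Theorem \ref{thm: co-root}, so each long side subtends enough arc to host a critical chord without crossing any side of $P(M)$ or any other chord of $\mathcal{C}$. The forward orbit of $P(M)$ together with $\mathcal{C}$ forms a forward-invariant periodic set $F$ compatible with a critical portrait in the sense of the definitions just before the Pullback Scheme theorem, since $\mathcal{C}$ meets $\cup F$ only at vertices of polygons by construction.

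I would then apply the Pullback Scheme theorem to $(F,\mathcal{C})$ to obtain $\mathcal{S}(M) = \cl{F_\infty}$, a sibling $d$-invariant lamination that by definition contains $P(M)$ and all its pullbacks. Because $P(M)$ lies in a maximal critical sector (its $d-1$ long sides are adjacent), and the guiding chords were chosen exactly as in Definition \ref{def: SCM lam}, the resulting lamination $\mathcal{S}(M)$ is by definition the canonical SCM lamination associated to $P(M)$. Uniqueness of $\mathcal{S}(M)$ follows from uniqueness of $P(M)$ in Lemma \ref{lem: MAC to SCM} together with the canonical choice of guiding critical chords.

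The main obstacle I expect is verifying that the pullback produces a lamination satisfying condition (4) of Definition \ref{def: Sibling invaritant lam}, namely that no crossings of limit leaves appear in $\cl{F_\infty}$ and that equivalence classes remain finite. This reduces to showing that at every finite pullback stage the new preimage leaves respect the central strip geometry of $P(M)$: by Corollary \ref{max cen strip}, no iterate of a major can re-enter the central strip of $P(M)$ with both endpoints in one component, so no preimage inside a critical sector can cross either the boundary leaves of $P(M)$ or the siblings of its majors. Combined with the fact that the branches of the inverse $\tau_i$ determined by $\mathcal{C}$ are homeomorphisms on their respective sectors, this forces the finite pullbacks $F_n$ to remain pairwise non-crossing, and hence the closure is a bona fide sibling $d$-invariant lamination containing $P(M)$ with the claimed structure.
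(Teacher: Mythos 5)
Your proposal is correct and follows essentially the same route as the paper: apply Lemma \ref{lem: MAC to SCM} to obtain $P(M)$, then take the canonical pullback of $P(M)$ with the guiding critical chords of Definition \ref{def: SCM lam}, whose uniqueness gives $\mathcal{S}(M)$. The paper states this in two sentences and leaves the compatibility and non-crossing checks implicit in the canonical construction; your extra verification via Theorem \ref{thm: co-root} and Corollary \ref{max cen strip} simply fills in those details rather than changing the argument.
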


\begin{proof}
Let $\mathcal{L}(M)$ be a MAC lamination with the MAC leaf $M$. We apply Lemma \ref{lem: MAC to SCM} to obtain the respective SCM polygon $P(M)$, and in the identity return case, we have $M$ as one of its sides. In the pullback lamination of $P(M)$ with guiding critical chords described in Definition \ref{def: SCM lam}, the canonical lamination $\mathcal{S}(M)$ is uniquely determined.
\end{proof}

\begin{thm}[Identity Return MAC Gaps] \label{thm: MAC Gaps}

In the identity return case, the only gap attached to the major of a MAC orbit is the infinite gap containing the all critical $d$-gon. In other words the major and its siblings are single sided limit leaves. 
\end{thm}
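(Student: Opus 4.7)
The plan is to argue by contradiction. Suppose there is a gap $H$ attached to $M$ on the side opposite $G$. Since a leaf borders at most one gap on each side, $G$ and $H$ are the only gaps with $M$ in their boundary. I would first show $H$ has period exactly $n$, the period of $M$. Since $M$ is identity return, both of its endpoints are fixed by $\sigma_d^n$, so $\sigma_d^n(H)$ is again a gap containing $M$ in its boundary. Applying the Order Preserving Proposition iteratively along the orbit $H, \sigma_d(H), \ldots, \sigma_d^{n-1}(H)$, the map $\sigma_d^n$ preserves the cyclic order of the boundary points of $H$; with the two endpoints of $M$ fixed and the other boundary points of $H$ all lying on the short arc of $M$, the images must be pinned to that arc, so $\sigma_d^n(H)$ is a gap on the short side of $M$ and hence equals $H$. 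The period is exactly $n$ because $M$ itself has least period $n$.

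If $H$ is a polygon, Kiwi's theorem in the unicritical setting permits at most two orbits of sides (one critical value). A single orbit would collapse $H$ to a single $\sigma_d^n$-fixed side, namely $M$, contradicting a polygon's three-side minimum. Two orbits would force $H$ to be identity return by Kiwi's moreover clause, so each side is its own orbit and $H$ has at most two sides --- the same contradiction. If $H$ is a Fatou gap, its induced first return map on $\widehat{H}$ is an exact covering of degree $d' \ge 2$, so writing $\sigma_d^n|_H$ as a composition and counting degrees forces some iterate $\sigma_d^i(H)$ with $0 \le i < n$ to contain a critical chord. The only critical chords in the MAC lamination are sides of the all-critical $d$-gon, all lying in $\partial G$; since $\sigma_d^i(H)$ is itself a Fatou gap, we must have $\sigma_d^i(H) = G$, and hence $H = G_{n-i}$. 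The case $H = G$ contradicts $G$ lying on the long side of $M$, while $H = G_k$ with $0 < k < n$ would force $G$ and $G_k$ to share the leaf $M$, contradicting the Corollary to Lemma \ref{lem: disjoint gaps} that the $G_k$'s are pairwise disjoint in the identity return case.

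For the siblings of $M$, if a gap $H'$ were attached to a sibling $M'$ on the side opposite $G$, then $\sigma_d(H')$ would be a gap attached to $m = \sigma_d(M')$, which lies in the MAC orbit; the argument above applied at $m$ admits only $G_1$ as an adjacent gap, forcing $\sigma_d(H') = G_1$ and hence $H' \subseteq \sigma_d^{-1}(G_1) = G$, contradicting $H'$ being outside $G$. The main obstacle is Step 1: the tension between $\sigma_d^n$ being highly expanding on the short arc of $M$ and $\sigma_d^n(H)$ being required to land on the short side is resolved not by expansion estimates but by the Order Preserving Proposition, which constrains only the specific boundary points of the gap $H$ rather than arbitrary points of the short arc.
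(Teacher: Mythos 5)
Your architecture is genuinely different from the paper's. The paper first observes that all criticality sits inside $G$, so no collapsing polygon, Siegel gap, or higher-degree Fatou gap can sit on the other side of $M$; it then kills a strict pre-image of $G$ attached to $M$ by an order-preservation (``flip'') argument, kills a periodic polygon attached to $M$ by gluing it to the SCM $d$-gon $P(M)$ across $M$ and contradicting Kiwi's bound of $d$ sides for identity return polygons, and handles the siblings by the symmetry of the sibling portrait. You instead first show that any gap $H$ attached to the short side of $M$ satisfies $\sigma_d^n(H)=H$ (order preservation along the orbit of gaps, with both endpoints of $M$ fixed), which neatly subsumes the strict pre-image case, then apply Kiwi directly to $H$, use a degree count for infinite gaps together with the disjointness corollary, and push the sibling case forward to the minor. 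This is a viable and in places cleaner route, but two steps need repair.

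First, the one-orbit subcase of your polygon argument is a non sequitur: ``a single orbit would collapse $H$ to a single $\sigma_d^n$-fixed side'' tacitly uses that each side of $H$ is its own orbit, which is available only when $H$ is known to be identity return --- and Kiwi's ``moreover'' clause gives that only in the two-orbit subcase. The correct (easy) fix: if all sides of $H$ lie in one orbit, that orbit contains $M$, so every side of $H$ is a leaf of the MAC orbit; but adjacent sides of a polygon share an endpoint, while the leaves of an identity return MAC orbit are pairwise disjoint. Second, your dichotomy ``polygon or Fatou gap'' is not exhaustive: a periodic infinite gap can have a degree-one return map (Siegel or caterpillar type), and then your degree count produces no critical iterate at all. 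The paper covers precisely this by the blanket fact that every periodic infinite gap requires a critical chord in the interior or boundary of some gap of its cycle; citing that fact, your argument closes as before, since no critical chord fits outside $G$ (the all critical $d$-gon exhausts the criticality), forcing some iterate of $H$ to be $G$ and contradicting the corollary that the forward images of $G$ are pairwise disjoint. Smaller points: the assertion that $H$ has period exactly $n$ is not justified (a gap can have smaller period than a boundary leaf), but it is also not needed, since $\sigma_d^n(H)=H$ suffices; you should say a word about why no image of $H$ degenerates to a leaf (again, no critical chord can lie in a gap other than $G$); and your sibling paragraph silently re-runs the entire argument at the minor $m$, thereby proving the stronger statement that $m$ is also one-sided --- legitimate, but the paper's appeal to the symmetric sibling portrait is shorter.
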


\begin{proof}
    Start with a MAC lamination, $\mathcal{L}$, with major, $M$, and central gap, $G$. Since $\mathcal{L}$ is a MAC lamination, all the criticality is used up by the all critical $d$-gon in $G$. The major along with its siblings are the longest leaves in the lamination. We can determine a few things about the side of $M$ opposite $G$. There is no room outside $G$ for a chord, or leaf, of critical length. This means there is no room for a collapsing polygon anywhere since it requires a critical chord in its interior. Also, infinite gaps such as Siegel gaps and Fatou gaps (degree $k$-covering gaps) all require a critical length in their interior or on their boundary. These gaps cannot exist in $\mathcal{L}$ since all the criticality is used up by $G$. 
    
    On the other side of $M$, there could either be a finite gap or an infinite gap. We show each is impossible.
    
    The only infinite gap case left to consider is a pre-image of $G$. Suppose there was such a gap $G_{-k}$, a $k^{\text{th}}$ pre-image of G. Then, there is a pre-image of $M$ on the boundary of $G_{-k}$ that is not a leaf in the MAC orbit or else $G_{-k}$ would be in the forward orbit of $G$. The gap $G_{-k}$ cannot reside in the forward orbit of $G$ or else $G$ would share a side with one of its forward images which is prohibited by Lemma \ref{lem: disjoint gaps}. Then let us consider when $G_{-k}$ maps into the forward orbit of $G$. As $G_{-k}$ maps around, it maps one-to-one until the iterate where the side in the MAC orbit maps back to $M$. When $G_{-k}$ maps onto $G$ the points in the boundary of $G_{-k}$ map forward preserving circular order, so $M$ must flip. This contradicts that $M$ returns by the identity.
    
    There remain three finite gap cases to consider: eventually collapsing polygons, eventually periodic polygons, and periodic polygons. First, we cannot have an eventually collapsing polygon because the only gap containing criticality is $G$ which does not collapse. Second, we consider a pre-periodic polygon, but since this polygon would be attached to the already periodic leaf, $M$, the polygon must have the same period. Therefore, the leaf could not be pre-periodic, but it could be periodic. 
    
    Third, consider a finite gap with periodic endpoints attached to $M$. Now apply the construction from Lemma \ref{lem: MAC to SCM} and Theorem \ref{thm: MAC to SCM Lamination} to $M$ to generate an SCM lamination $\mathcal{S}(M)$ with an SCM $d$-gon $P(M)$. This would cause $M$ to now have an SCM $d$-gon on one side and a polygon on the other. All of the points in question are of the same period by assumption and should return by the identity. But, we see $M$ is interior to a larger polygon with at least $d+1$ sides that returns to itself by the identity contradicting Kiwi's theorem \cite{Kiwi:2001}. 
    
    We are left with the fact that the other side of $M$ contains no gap which means there are only leaves limiting to $M$ on that side. Hence $M$ is a one-sided limit leaf in its MAC lamination. 
    
    Since we have the symmetric sibling portrait, the siblings of $M$ are also one-sided limit leaves by a similar argument. 
\end{proof}

\begin{cor}\label{Cor : Limit Leaves}
Let $\mathcal{L}$ be a MAC lamination with major, $M$, which is identity return. The following hold:
\begin{enumerate}
    \item The leaf, $M$, and its siblings are single sided limit leaves in $\mathcal{L}$. 
    \item There are no isolated leaves in $\mathcal{L}$.
\end{enumerate}
\end{cor}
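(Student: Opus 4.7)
The plan is to treat (1) and (2) separately. Item (1) is essentially immediate from Theorem~\ref{thm: MAC Gaps}: that theorem establishes the single-sided limit-leaf property for $M$, and its closing remark observes that the same argument applies to each sibling of $M$ via the symmetric sibling portrait, so (1) needs only a citation.

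For (2), the approach is a proof by contradiction via pullback propagation. First I would upgrade (1) from $M$ and its siblings to every leaf of the MAC orbit. By the identity-return corollary to Lemma~\ref{lem: disjoint gaps}, the forward images $G_i := \sigma_d^i(G)$ are pairwise disjoint Fatou gaps, each bounded on one side by $M_i := \sigma_d^i(M)$. The three subarguments inside Theorem~\ref{thm: MAC Gaps} transfer verbatim to the side of $M_i$ opposite $G_i$: no critical chord fits there, no infinite gap can form since all criticality is confined to the grand orbit of $G$, and a finite gap attached to $M_i$ would embed $M_i$ in a larger identity-return polygon, contradicting Kiwi's theorem. Hence every $M_i$ is a one-sided limit leaf.

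Next, suppose for contradiction that some $\ell \in \mathcal{L}$ is isolated. Leaves that appear in $\mathcal{L}$ only upon taking the closure $\cl{F_\infty}$ are by construction limits of sequences in $F_\infty$, and thus not isolated; so $\ell$ must belong to $F_\infty$ itself. Then there is a least $k \ge 0$ with $\sigma_d^k(\ell) = M_j$ for some MAC orbit leaf $M_j$. Write $\tau$ for the specific branch of $\sigma_d^{-k}$ realizing $\tau(M_j) = \ell$. Since $M_j$ is one-sided limit, pick a sequence $\{\ell_n\} \subset \mathcal{L}$ with $\ell_n \to M_j$ from the limit side. Then $\{\tau(\ell_n)\} \subset \mathcal{L}$ by backward invariance, and by continuity of $\tau$ we have $\tau(\ell_n) \to \tau(M_j) = \ell$, contradicting isolation.

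The main obstacle is verifying that $\tau$ is continuous and well-defined on a one-sided neighborhood of $M_j$ large enough to contain a tail of $\{\ell_n\}$. The limit side of $M_j$ is opposite the Fatou gap $G_j$, and every critical chord of $\mathcal{L}$ lies in the grand orbit of $G$; hence the composition $\tau = \tau_{i_1} \circ \cdots \circ \tau_{i_k}$ of branches of $\sigma_d^{-1}$ is continuous on a one-sided neighborhood of $M_j$ reaching the approaching leaves, except possibly at a single critical endpoint of $M_j$. Since $\ell_n \to M_j$ in the endpoint metric, a suitable tail of the sequence sits in the domain of $\tau$, producing the required limit sequence $\tau(\ell_n) \to \ell$ and closing the contradiction.
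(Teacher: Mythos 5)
Your proposal is correct and takes essentially the same route as the paper: item (1) is exactly Theorem \ref{thm: MAC Gaps}, and item (2) rests on the observation that every leaf of $\mathcal{L}$ is either a limit leaf added at closure or a leaf of $F_\infty$, i.e.\ an iterated pre-image of the MAC orbit, whose non-isolation is obtained by pulling an approaching sequence back through branches of the inverse --- the same mechanism behind the paper's terser ``pulling $M$ back'' argument. The only cosmetic differences are your contradiction framing, your extra care with continuity of the branches, and your re-running of Theorem \ref{thm: MAC Gaps} for each forward image $M_i$, which the paper does not need since $M$ is periodic and hence every leaf of $F_\infty$ is already an iterated pre-image of $M$ itself.
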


\begin{proof}
By Section \ref{sec: pullback}, every leaf in $\mathcal{L}$ is either a pullback of the forward orbit of $M$ or a limit of pullbacks of the forward orbit of $M$. By Theorem \ref{thm: MAC Gaps} $M$ is a single sided limit leaf, and pulling $M$ back we have its siblings and their pre-images are also single sided limit leaves giving us (1). Then, we take the closure of the pre-lamination and thus every leaf added is itself a limit leaf giving us (2).
\end{proof}

\section{SCM to Unicritical Laminations}
\subsection{Structure of Canonical MAC Laminations}

Recall in Definition \ref{def: SCM lam} we state an SCM polygon, $P$, is in a maximal critical sector surrounded by $d-1$ critical chords. This will be our starting place to expand our understanding of these polygons in order to find a one-to-one correspondence to MAC laminations. We already have one direction of the correspondence from MAC to SCM. Now, we want to go in the other direction from a canonical SCM lamination to a MAC lamination.

\begin{thm}[Properties of Canonical SCM Laminations] \label{thm: canonical SCM props}
Let $\mathcal{S}$ be a canonical SCM lamination with either a SCM $d$-gon or $k(d-1)$-gon, $P$. Then $\mathcal{S}$ and $P$ have the following properties.
\begin{enumerate}
   
    \item $P$ must have exactly $d-1$ sides of length between $\frac{1}{d}$ and $\frac{1}{d-1}$ and at least one side of length less than $\frac{1}{d}$. \label{Canonical SCM 1}
    \item For each of the $d-1$ longest sides, $\ell_1, ..., \ell_{d-1}$, there is a degree $2$ Fatou gap bounded by the $\ell_i$ and its sibling $\ell'_i$. \label{Canonical SCM 2}
    
    \item In the identity return case, the maximal critical sector has exactly one arc of the circle has two vertices of $P$ lying on it, namely the endpoints of $M$. \label{Canonical SCM 3}
\end{enumerate}
\end{thm}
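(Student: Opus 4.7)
The plan is to prove (3) first (a pigeonhole count that identifies the combinatorial position of $M$), then use that structure to pin down the lengths in (1), and finally address (2) using the sibling construction. The three parts are largely independent once the structural picture is in hand.

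For Property (3), in the identity return case $P$ is a $d$-gon with $d$ vertices on $\ucirc$, and the maximal critical sector guaranteed by Definition \ref{def: SCM lam} is bounded by the $d-1$ guiding critical chords together with $d-1$ arcs of $\ucirc$. Distributing the $d$ polygon vertices across $d-1$ sector arcs forces at least one arc to contain $\geq 2$ vertices; a further count shows that if any arc had $\geq 3$ vertices, or two distinct arcs each had $\geq 2$, the total would be $\geq d+1>d$, a contradiction. Hence exactly one arc contains exactly two vertices, and every other arc contains exactly one. The two vertices on the distinguished arc are joined by a side of $P$ that traverses no critical chord, and this side must be $M$ (it is the shortest side, as in the argument for (1) below).

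For Property (1), the lower bound $|\ell_i|>1/d$ on each of the $d-1$ longest sides follows from (3) together with the SCM structure. Each major $\ell_i$ joins endpoints in adjacent sector arcs separated by exactly one guiding critical chord of arc length $1/d$; Definition \ref{def: SCM lam} puts one endpoint of $\ell_i$ at a critical chord endpoint while the other endpoint lies strictly interior to the neighboring arc, so the shorter arc subtended by $\ell_i$ is $1/d$ plus a strictly positive endcap contribution. The ``at least one side of length less than $1/d$'' claim comes from summing CCW arc lengths once around $P$: the total is $1$, the $d-1$ majors contribute more than $(d-1)/d$, and so the remaining side(s) account for less than $1/d$. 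In the identity return case this forces $|M|<1/d$, and in the rotational case each of the $(k-1)(d-1)$ non-major sides is $<1/d$. For the upper bound $|\ell_i|<1/(d-1)$, I would invoke the length estimate established inside the proof of Corollary \ref{max cen strip}: every major in a maximally critical SCM polygon satisfies $|\ell_i|-1/d<1/(d(d+1))$, by the sum-of-lengths contradiction used there in both the identity return and rotational subcases. A direct calculation $1/d+1/(d(d+1))=(d+2)/(d(d+1))<1/(d-1)$ (since $(d+2)(d-1)<d(d+1)$) then yields $|\ell_i|<1/(d-1)$.

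For Property (2), I would identify the sibling $\ell_i'$ of each major $\ell_i$ as the leaf produced by the branch of $\sigma_d^{-1}$ corresponding to the critical sector on the side of the guiding critical chord $c_i$ opposite $P$. Then $\ell_i$, $\ell_i'$, and two arcs of $\ucirc$ bound a gap $F_i$ whose interior contains exactly the single critical chord $c_i$. By Definition \ref{def: max crit poly} criticality is broken into degree-two pieces, so the one critical chord in $F_i$ forces $\sigma_d\colon F_i \to \sigma_d(F_i)$ to be 2-to-1, displaying $F_i$ as a degree 2 Fatou gap bounded by $\ell_i$ and $\ell_i'$. The main technical point is the length bound $|\ell_i|-1/d<1/(d(d+1))$ feeding the upper bound in (1); this is the content already established inside the proof of Corollary \ref{max cen strip}, but it needs to be tracked through the rotational subcase where the non-major sides come from a larger $k(d-1)$-gon. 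The remaining steps are essentially combinatorial.
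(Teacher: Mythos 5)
Your proposal follows essentially the same route as the paper's (much terser) proof: the upper bound in (1) via the within-$\frac{1}{d(d+1)}$-of-criticality estimate from Corollary \ref{max cen strip} and the algebra $\frac{d+2}{d(d+1)}<\frac{1}{d-1}$, the lower bound from each major subtending a guiding critical chord, (2) from the sibling across the guiding chord bounding a degree two Fatou gap, and (3) by pigeonhole. One small point to tighten: your exclusivity count in (3) tacitly assumes every arc of the maximal critical sector carries at least one vertex of $P$, which should be justified from the adjacency of the $d-1$ majors each crossing a distinct guiding critical chord (forcing consecutive vertices into consecutive arcs), but this uses only facts you already invoke and matches the paper's intent.
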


\begin{proof}
Definition \ref{def: SCM lam} gives us an SCM polygon $P$ and a collection of $d-1$ guiding critical chords each touching one endpoint of each of the $d-1$ longest sides of $P$. For item (\ref{Canonical SCM 1}), each of the sides of $P$ are at their closest approach to criticality (\ref{def:SCM}), meaning each side of $P$ is within $\frac{1}{d(d+1)}$ of critical length $\frac{1}{d}$ (\ref{max cen strip}). Thus, the longest these sides could be is $\dfrac{1}{d}$ + $\dfrac{1}{d(d+1)}$ = $\dfrac{d+2}{d(d+1)}$. Through a simple algebraic argument, we have that $\dfrac{d+2}{d(d+1)} < \dfrac{1}{d-1}$. Thus, the $d-1$ longest sides of $P$ are less than $\dfrac{1}{d-1}$. All of the majors of $P$ are longer than $\frac{1}{d}$ since each subtends a critical chord. There is at least one other side of $P$ that does not have a critical chord that it subtends and is therefore less than $\dfrac{1}{d}$. The sibling $S'$ emanates from the other endpoint of the critical chord. Together $S$ and $S'$ bound a degree two Fatou gap. Item (\ref{Canonical SCM 2}) follows immediately from Definition \ref{def: SCM lam}. Item (\ref{Canonical SCM 3}) follows from the pigeon hole principle. 
\end{proof}

\begin{thm}[MAC and SCM Laminations Compared] \label{thm: MAC and conc SCM compared}
Let $\mathcal{L}(M)$ be a MAC lamination and $\mathcal{S}(M)$ be the corresponding canonical SCM lamination as in Theorem \ref{thm: MAC to SCM Lamination}. In the identity return case, the only difference between $\mathcal{L}$ and $\mathcal{S}$ is the grand orbits of the sides added to $M$ to make $P(M)$. In the rotational case, the grand orbit of the leaves of $\mathcal{L}(M)$ are internal to polygons of $P(M)$. 
\end{thm}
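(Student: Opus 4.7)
The plan is to treat the identity-return and rotational cases separately, in each case comparing directly the two pullback schemes that generate $\mathcal{L}(M)$ and $\mathcal{S}(M)$. The main tools will be Theorem \ref{thm: MAC Gaps} (the Fatou gap structure of $\mathcal{L}(M)$), Theorem \ref{thm: co-root} (the co-roots sit in $\partial G$ between the siblings of $M$), and Lemma \ref{lem: MAC to SCM} (the explicit construction of $P(M)$ from $M$).

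For the identity return case, I would first invoke Theorem \ref{thm: MAC Gaps} to identify the unique periodic Fatou gap $G$ attached to $M$ in $\mathcal{L}(M)$, noting that no other gap is adjacent to $M$ or its siblings. By Theorem \ref{thm: co-root} the $d-2$ co-roots lie in $\partial G$ in the end-caps between the siblings of $M$, so the $d-1$ new sides of $P(M)$ are all chords of $\overline{G}$. Next I would argue that outside the grand orbit $\mathcal{GO}(G)$ the two pullback schemes coincide: every guiding critical chord used to build $\mathcal{S}(M)$ lies inside $\mathcal{GO}(G)$, so the branches of the inverse that carry $M$ into the complement of $\mathcal{GO}(G)$ are the same in both constructions and hence produce the same leaves. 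Inside $\mathcal{GO}(G)$, the only leaves of $\mathcal{L}$ are images and preimages of $M$ lying on the boundaries of these gaps (again by Theorem \ref{thm: MAC Gaps}), whereas $\mathcal{S}$ additionally contains the $d-1$ new sides of $P(M)$ together with all their pullbacks---exactly the grand orbits of the sides added to $M$.

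For the rotational case, Lemma \ref{lem: MAC to SCM} tells me that each side of the rotational $k$-gon $Q$ in $\mathcal{L}(M)$ (with $M$ as a side, or $Q=M$ when $k=1$) is replaced by $d-1$ new chords routed through the corresponding co-root images, producing a $k(d-1)$-gon $P(M)$ whose interior contains every side of $Q$ as a diagonal. Pulling back under $\sigma_d$, each preimage polygon of $P(M)$ likewise contains the corresponding preimage of a side of $Q$ as an interior chord, so the entire grand orbit of $Q$---equivalently, the grand orbit of the leaves generating $\mathcal{L}(M)$---sits internal to polygons in the grand orbit of $P(M)$ in $\mathcal{S}(M)$. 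The main obstacle in both cases is handling the closure operation that completes the pullback construction: I must verify that the closure $\overline{F_\infty}$ used to define $\mathcal{S}$ introduces no limit leaves outside the grand orbits of $P(M)$'s new sides (identity case) or outside the grand orbit of $P(M)$ (rotational case), and dually that every limit leaf of $\mathcal{L}$ also appears in $\mathcal{S}$. This should fall out of Corollary \ref{Cor : Limit Leaves} combined with the fact that the two pullback schemes literally agree outside the relevant grand orbits, so the closures of the corresponding pre-laminations must agree there as well.
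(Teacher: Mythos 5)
Your overall decomposition (compare the two pullback constructions; outside the central-gap grand orbit they should give the same leaves; inside, $\mathcal{S}$ has exactly the grand orbits of the added sides) is in the spirit of the paper, and your rotational paragraph matches what the paper actually does (it simply cites Lemma \ref{lem: MAC to SCM} there). However, there is a genuine gap at exactly the point the paper spends its effort: ruling out \emph{new limit leaves of $\mathcal{S}$ interior to the grand orbit of $G$}. You acknowledge the closure issue but propose to settle it by (a) Corollary \ref{Cor : Limit Leaves} and (b) the claim that ``the two pullback schemes literally agree outside the relevant grand orbits.'' Neither works for the interior region. Corollary \ref{Cor : Limit Leaves} is a statement about $\mathcal{L}$ (its major is a one-sided limit leaf and $\mathcal{L}$ has no isolated leaves); it says nothing about which limit leaves the SCM closure $\overline{F_\infty}$ can produce inside the preimages of $G$, which is precisely where the two constructions differ. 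The paper's proof handles this by showing that any hypothetical limit leaf $\ell$ interior to $\partial G$ must lie in a degree-two Fatou gap of $\mathcal{S}$ (else it is already in $\mathcal{GO}(P(M))$, by Theorem \ref{thm: canonical SCM props}), cannot connect two endcaps of that gap without contradicting part (\ref{Canonical SCM 2}) of Theorem \ref{thm: canonical SCM props} (canonicity: the gap is bounded by a major and its sibling), and hence sits in a single endcap limited to by preimages of $P(M)$, after which mapping forward reduces the situation. Your proposal contains no substitute for this argument; ``exactly the grand orbits of the sides added to $M$'' inside $\mathcal{GO}(G)$ is the statement to be proved, not a consequence of what precedes it.

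Secondly, the assertion that the two pullback schemes ``literally agree'' outside $\mathcal{GO}(G)$ is not literally true and needs an argument. The guiding critical portraits are different objects: an all-critical $d$-gon attached to an endpoint of $M$ for $\mathcal{L}(M)$ (Definition \ref{def: MAC lam}) versus $d-1$ separate critical chords touching endpoints of the longest sides of $P(M)$ for $\mathcal{S}(M)$ (Definition \ref{def: SCM lam}). The branches of the inverse are therefore different maps on the circle (their defining arcs have different endpoints), so one cannot conclude without work that they produce identical pullback leaves of $M$, even away from $G$. The paper avoids this comparison of schemes entirely by identifying $\mathcal{L}=\overline{\mathcal{GO}(M)}$ and $\mathcal{S}=\overline{\mathcal{GO}(P(M))}$ (using Lemma \ref{Top1} to replace $\partial G$ by $M$), so that the containment $\mathcal{L}\subset\mathcal{S}$ is immediate from $\mathcal{GO}(M)\subset\mathcal{GO}(P(M))$ and the whole question reduces to the limit-leaf analysis described above. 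You should either adopt that grand-orbit formulation or supply the missing argument that the two branch systems yield the same preimage leaves where you claim they do.
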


\begin{proof}

In the identity return case, we first want to verify that $\cl{\mathcal{GO}(\partial G)} = \cl{\mathcal{GO}(M)}$ where $G$ is the central gap of $\mathcal{L}(M)$. The boundary leaves of $G$ are made entirely of pre-images of $M$ by Lemma \ref{Top1}, thus we know that pulling back $\partial G$ is the same as pulling back $M$. Therefore $\cl{\mathcal{GO}(\partial G)} =  \cl{\mathcal{GO}(M)}$, which means the two grand orbits give us the same lamination $\mathcal{L}$. By a similar argument $\mathcal{S} = \cl{\mathcal{GO}(P(M))}$. 

Next, we need to verify that $\mathcal{S}$ and $\mathcal{L}$ have the same limit leaves. We begin by noting that $\mathcal{GO}(M) \subset \mathcal{GO}(P(M))$ which means $\mathcal{L} \subset \mathcal{S}$. Now we must show that any limit leaf in $\mathcal{S}$ is also in $\mathcal{L}$. The SCM $d$-gon $P(M)$ is contained in the central gap $G$ of $\mathcal{L}$. Then, $\mathcal{GO}(P(M)) \subset \mathcal{GO}(G)$ because $P(M)$ subdivides $G$. This means there are no limit leaves of $P(M)$ lying outside the $\mathcal{GO}(G)$ that are not already in $\mathcal{L}$. That only leaves the possibility of new limit leaves appearing interior to $\mathcal{GO}(G)$ in $\mathcal{S}$. Consider a limit leaf $\ell$ interior to $\partial G$. The leaf $\ell$ must lie in a degree two Fatou gap of $\mathcal{S}$, or else it is in $\mathcal{GO}(P(M))$ by Theorem \ref{thm: canonical SCM props}. If $\ell$ connects end caps then $S$ and $S'$ do not bound the degree 2 Fatou gap, contradicting $\mathcal{S}$ being canonical by part (\ref{Canonical SCM 2}) of Theorem \ref{thm: canonical SCM props}. Therefore, $\ell$ must be in one end cap of the Fatou gap limited to by pre-images of $P(M)$. The forward images of $\ell$ are limits of the forward images of these pre-images of $P(M)$ including $P(M)$ itself. This reduces to the case where $\ell$ subdivides a Fatou gap. Hence all of the limit leaves in $\mathcal{S}$ are contained in $\mathcal{L}$.

The rotational case follows from Lemma \ref{lem: MAC to SCM}.
\end{proof}

\begin{thm}[Canonical SCM to MAC Laminations] \label{thm: canonical SCM to MAC lam}
Let $\mathcal{S}$ be a canonical SCM lamination containing an SCM polygon, $P$, that is either identity return or rotational. 

Then, there exists a unique corresponding MAC lamination, $\mathcal{L}$, with a leaf $M$ as its MAC leaf. 

If it is identity return, our SCM polygon has $M$ as its shortest side. 
If it is rotational, $M$ is internal to the SCM polygon.

\end{thm}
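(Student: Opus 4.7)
The plan is to invert the MAC-to-SCM construction of Lemma \ref{lem: MAC to SCM} by reading off $M$ directly from the SCM polygon $P$, then invoking Definition \ref{def: MAC lam} to form $\mathcal{L}(M)$ as the canonical MAC lamination, and finally verifying $\mathcal{S}(M)=\mathcal{S}$ via Theorem \ref{thm: MAC and conc SCM compared} and Theorem \ref{thm: MAC to SCM Lamination}. Uniqueness at each stage of this explicit construction will deliver uniqueness of $\mathcal{L}$.

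For the identity return case, I would use Theorem \ref{thm: canonical SCM props}(\ref{Canonical SCM 1}) to observe that the $d$-gon $P$ has exactly $d-1$ sides of length in $(1/d,1/(d-1))$ and therefore a unique side of length strictly less than $1/d$; this unique short side is $M$. The vertices of $P$ share a common identity return period $n$, so $M$ itself is an identity return leaf of period $n$. I would then discard the $d-1$ guiding critical chords of $\mathcal{S}$, attach a single all-critical $d$-gon to one endpoint of $M$, and apply Definition \ref{def: MAC lam} to produce $\mathcal{L}(M)$.

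For the rotational case, the $d-1$ adjacent longest sides of $P$ required by Definition \ref{def: SCM lam} form a chain with outer endpoints $a$ and $b$; define $M=\overline{ab}$, a chord internal to $P$. The $k-1$ images $\sigma_d^i(M)$ together with $M$ form a rotational $k$-gon with the same rotation number as the vertex orbit of $P$, because each iterate of $P$ contains an analogous chain of $d-1$ adjacent long sides whose outer endpoints are the next pair of MAC vertices. Replacing the $d-1$ guiding critical chords with a single all-critical $d$-gon adjacent to $M$ and pulling back this MAC rotational $k$-gon gives $\mathcal{L}(M)$ by Definition \ref{def: MAC lam}.

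The hard part is verifying that $M$ (and in the rotational case, the $k$-gon it generates) really is a MAC object in $\mathcal{L}(M)$, i.e., closest to critical length in its $\sigma_d$-orbit after the SCM critical portrait has been replaced by the all-critical $d$-gon. Here Theorem \ref{thm: MAC and conc SCM compared} is the workhorse: the grand orbits of $\mathcal{L}(M)$ and $\mathcal{S}$ differ only in the grand orbits of the leaves we removed, so the forward orbit of $M$ is identical in both, and Corollary \ref{max cen strip} forbids any iterate of $M$ from approaching $1/d$ more closely than $M$ itself. Applying Lemma \ref{lem: MAC to SCM} to $\mathcal{L}(M)$ then reproduces co-roots in the end caps (via Theorem \ref{thm: co-root}) that reconnect into the very chain of $d-1$ sides we dismantled, so $\mathcal{S}(M)=\mathcal{S}$; uniqueness of $M$ at the first step and of each canonical pullback thereafter completes the proof.
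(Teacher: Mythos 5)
Your overall strategy (read $M$ off from $P$, form the canonical pullback $\mathcal{L}(M)$, then compare) parallels the paper's, but the step you yourself identify as the hard part --- verifying that $M$ really is a MAC leaf --- is not carried by the results you cite. Theorem \ref{thm: MAC and conc SCM compared} has as its hypothesis that $\mathcal{L}(M)$ is a MAC lamination and that $\mathcal{S}(M)$ is the canonical SCM lamination produced from it by Theorem \ref{thm: MAC to SCM Lamination}; invoking it to establish that $M$ is MAC is circular. Likewise Corollary \ref{max cen strip} does not say that no iterate of $M$ approaches $\frac{1}{d}$ more closely than $M$ does: it says that the orbit of a \emph{major} of $P$ cannot re-enter the central strip with both endpoints in one component of $C\cap\ucirc$, and $M$ is not one of the majors of $P$. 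What is actually needed (and what the paper supplies in Lemma \ref{lem: pre-lam SCM to MAC}) is a length-and-position argument: the $d-1$ majors of $P$ occupy more than $\frac{d-1}{d}$ of the circle, so the arc subtended by $M$ away from them has length less than $\frac{1}{d}$, all forward images of $M$ lie underneath $M$, and hence there is room for an all critical $d$-gon with vertices at the appropriate endpoints of $M$ and its siblings that is compatible with the orbit of $M$; only then is $M$ the closest-to-critical leaf of its orbit and Definition \ref{def: MAC lam} applicable. The paper obtains this by deleting $\mathcal{GO}(P\setminus M)$ from $\widehat{\mathcal{S}}$ and exhibiting the central gap (Lemmas \ref{lem: recovering M}--\ref{lem: Canonical SCM to MAC}) rather than by building $\mathcal{L}(M)$ from scratch; either route needs this estimate, and your write-up omits it.

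Two smaller gaps. In the rotational case you assert that $M=\cl{ab}$ and its images form a rotational $k$-gon with the same rotation number, but you do not check that these chords are pairwise non-crossing and internal to $P$; the paper does this by adjoining the full forward and backward orbit of $M$ to get $\widehat{\mathcal{S}}$ and observing that the added leaves sit inside polygons of $\mathcal{GO}(P)$, so equivalence classes are unchanged. Also, your closing claim $\mathcal{S}(M)=\mathcal{S}$ requires knowing that the co-roots of $\mathcal{L}(M)$ are precisely the vertices of $P$ other than the endpoints of $M$ --- that is, that those vertices lie on $\partial G$ and are fixed under the first return map --- which you assert but do not prove. For the theorem as stated (existence and uniqueness of $\mathcal{L}$), the paper sidesteps this by constructing $\widehat{\mathcal{L}}$ directly from $\mathcal{S}$ with no choices made; recovering $\mathcal{S}$ from $\mathcal{L}(M)$ is the additional work needed to complete the one-to-one correspondence of Theorem \ref{thm: Main}, and it deserves its own argument rather than a sentence.
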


The following is an outline of the proof of the canonical SCM lamination to MAC lamination theorem which will be carried out through a sequence of lemmas.
\begin{itemize}
    \item First, in the rotational case, we recover $M$ and add it and all of its pullbacks back into the lamination. In the identity return case, $M$ is already a side of $P$. 
    
    \item We will remove the grand orbit of $P\setminus M$, meaning we will get rid of all of the pullbacks and forward images that are not in the grand orbit of $M$.
    
    \item Next, we need to show that this `reduced' version of our subset of the SCM lamination is still a lamination.
    
    \item Lastly, after ensuring what we have left is still a lamination we want to show it is a MAC lamination where our side $M$ is the major leaf.
\end{itemize}

\begin{lem}[Recovering $M$]\label{lem: recovering M}
    Let $\mathcal{S}(M)$ be a canonical SCM lamination and $P(M)$ the corresponding polygon (identity return or rotational). We can recover our MAC major, $M$, defining a $d$-invariant lamination $\widehat{\mathcal{S}}$. Furthermore, $\widehat{\mathcal{S}} \supset \mathcal{S}$. 
\end{lem}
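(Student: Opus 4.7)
The plan is to split into the identity return and rotational cases. In the identity return case, by Theorem \ref{thm: canonical SCM to MAC lam} the MAC leaf $M$ is already a side of the SCM polygon $P(M)$, hence $M \in \mathcal{S}$. Taking $\widehat{\mathcal{S}} = \mathcal{S}$ satisfies the conclusion trivially.

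The substantive case is the rotational one. Here $M$ is internal to $P(M)$: by the construction in Lemma \ref{lem: MAC to SCM}, each side of the original MAC rotational polygon (in particular $M$ itself) was replaced by $d-1$ chords joining consecutive co-roots and endpoints of that side. The two endpoints of the original $M$ are vertices of $P(M)$, and $M$ is recovered as the chord of $P(M)$ joining them. I would formally identify $M$ this way and then set
\[
\widehat{\mathcal{S}} \;=\; \cl{\mathcal{S} \cup \mathcal{GO}(M)},
\]
where $\mathcal{GO}(M)$ is computed using the same critical portrait (hence the same branches of the inverse) that defines the pullback scheme of $\mathcal{S}$.

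The key verification is that $\widehat{\mathcal{S}}$ is a sibling $d$-invariant lamination. Non-crossing is the main structural point: the forward images of $M$ are diagonals of the forward images of $P(M)$, which are finite gaps of $\mathcal{S}$, and so they subdivide those gaps without crossing any existing leaf. Each pullback of $M$ along a branch of the inverse of the critical portrait lands as a diagonal of a corresponding pullback of $P(M)$, again sitting inside a finite region of $\mathcal{S}$ without crossing its boundary. Forward invariance is automatic since $\sigma_d(M)$ is another side of the original MAC rotational polygon, hence in $\mathcal{GO}(M)$; backward and sibling invariance follow because we apply every branch of the inverse at every stage, exactly as in the pullback construction used to build $\mathcal{S}$. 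The inclusion $\widehat{\mathcal{S}} \supset \mathcal{S}$ is immediate from the definition.

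The hardest step to justify carefully is condition (4) of Definition \ref{def: Sibling invaritant lam}, that after closing up, equivalence classes remain finite. Since $\mathcal{S}$ already satisfies (4) and every chord being added lives interior to some finite gap (or pullback thereof) of $\mathcal{S}$, the new leaves only refine those finite gaps into finitely many smaller finite pieces, so no limiting equivalence class of infinite size can appear. I would verify this by tracing how the closure step can only introduce new limit leaves inside end caps of preexisting degree-two Fatou gaps described by Theorem \ref{thm: canonical SCM props}, and showing that any such limit is already accounted for by the pullback structure of $\mathcal{S}$ together with the finitely many diagonals of $P(M)$ that we added.
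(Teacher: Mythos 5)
Your proposal follows essentially the same route as the paper: in the identity return case $M$ is already the one non-major side of the SCM $d$-gon and nothing needs to be added, while in the rotational case one inserts the chord closing up the chain of $d-1$ adjacent majors together with its full forward and backward orbit and observes that these chords sit inside $P$ and its pullback gaps, so no crossings and no change of equivalence classes occur; the paper's own proof is in fact terser than yours, deferring the invariance and equivalence-class checks to a following remark and to Lemmas \ref{lem: pre-lam SCM to MAC}--\ref{lem: Canonical SCM to MAC}, so your extra verification is welcome detail rather than a divergence.

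One caveat: you justify the identity return case by citing Theorem \ref{thm: canonical SCM to MAC lam}, which is circular, since that theorem is exactly what Lemmas \ref{lem: recovering M}--\ref{lem: Canonical SCM to MAC} are assembled to prove; similarly, in the rotational case you identify $M$ by appeal to ``the original MAC rotational polygon,'' which presupposes the MAC lamination you are trying to construct. Both identifications should instead be made intrinsically from the SCM data, as the paper does: by Definition \ref{def: SCM lam} and Theorem \ref{thm: canonical SCM props} the canonical SCM $d$-gon has $d-1$ adjacent major sides and exactly one remaining (short) side, which \emph{is} $M$ in the identity return case, and in the rotational case $M$ is \emph{defined} to be the chord joining the two extreme endpoints of the chain of $d-1$ adjacent majors. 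With that rewording your argument stands; the rest of your verification (diagonals of $P$ and of its pullback gaps cannot cross existing leaves, invariance from applying all branches of the inverse, finiteness of classes preserved) is sound.
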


\begin{proof}
In both cases, our goal it to connect our chain of major leaves to get our desired leaf, $M$. Recall that an SCM polygon has $d-1$ adjacent major leaves (Definition \ref{def: SCM lam}).

In the identity return case, our SCM polygon only has one other leaf on its boundary which already connects our chain of major leaves. This is our leaf, $M$. 

In the rotational case, our SCM polygon will have, in addition to the majors, the full forward orbit of each of those major leaves as its sides. As before, we will connect our chain of major leaves by leaf. In this case, this leaf will be internal to our SCM polygon. We add this leaf and its full forward and backward orbit to our lamination. We will denote this new lamination as $\widehat{\mathcal{S}}$.
\end{proof}

\begin{rem}
Note in the rotational case, our polygon in $\widehat{\mathcal{S}}$ has interior leaves and will be resolved in subsequent lemmas. This does not change equivalence classes from $\mathcal{S}$.

\end{rem}

The goal of the following lemma is to remove the co-root leaves of $P$.

\begin{lem}\label{lem: pre-lam SCM to MAC}

Let $\widehat{\mathcal{S}}$ be as described in Lemma \ref{lem: recovering M}. Then $\widehat{\mathcal{L}} = \overline{\widehat{\mathcal{S}}\setminus(\mathcal{GO}(P\setminus M))}$ has a central gap $G$ bounded by $M$ and its siblings. Moreover, $\widehat{\mathcal{L}}$ is compatible with an all critical $d$-gon.
\end{lem}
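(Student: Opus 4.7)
The plan is to trace the removal operation carefully, leveraging the structural properties of $\mathcal{S}$ established in Theorem \ref{thm: canonical SCM props}, proceeding in three stages: identify the siblings of $M$, show the central region is stripped of leaves, and verify that the closure preserves the central gap structure.

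First, I identify the siblings of $M$ in $\widehat{\mathcal{S}}$. Because $\widehat{\mathcal{S}}$ is sibling $d$-invariant and contains $M$, the full sibling collection $\{M_1 = M, M_2, \ldots, M_d\}$ lies in $\widehat{\mathcal{S}}$. Since each $M_i$ maps to the single leaf $\sigma_d(M)$, every sibling lies in $\mathcal{GO}(M)$, which is disjoint from $\mathcal{GO}(P \setminus M)$; hence the full sibling collection survives the removal. Exactly one $M_i$ sits in each of the $d$ critical sectors determined by the $d-1$ critical chords of the defining portrait, with $M$ itself in the maximal critical sector containing $P$.

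Second, I show that the central region $G$ bounded by the convex hull of $M \cup M_2 \cup \cdots \cup M_d$ (the sector housing all $d-1$ critical chords) contains no surviving leaves. By Theorem \ref{thm: canonical SCM props}, every leaf of $\widehat{\mathcal{S}}$ interior to $G$ is either a side of $P$ other than $M$, a sibling $\ell_i'$ bounding one of the $d-1$ degree 2 Fatou gaps attached to $P$ along a major $\ell_i$, or a forward/backward iterate of one of these. All such leaves lie in $\mathcal{GO}(P \setminus M)$ and are removed; in the rotational case, the auxiliary leaves added in Lemma \ref{lem: recovering M} to recover $M$ lie in $\mathcal{GO}(M)$ and so are not of concern. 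The $d-1$ critical chords of the portrait, together with the paired endpoints of $M$ and its siblings that they join, can be combined to form an all critical $d$-gon $T \subset G$, exhibiting the compatibility claim.

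Third, I verify that taking the closure does not reintroduce leaves into the interior of $G$. Any new limit leaf would arise from a convergent sequence of leaves in $\widehat{\mathcal{S}} \setminus \mathcal{GO}(P \setminus M)$, namely pullbacks of $M$ lying in critical sectors other than the one containing $P$ (or their iterates), whose endpoints accumulate on $\bd G \cap \ucirc$ from the outside of $G$. Hence the interior of $G$ remains leaf-free in $\widehat{\mathcal{L}}$, and $G$ is a gap bounded precisely by $M$ and its siblings. The hardest step will be this closure step: rigorously confirming that no limit leaves penetrate the interior of $G$ and that the resulting set is a genuine lamination (with finite equivalence classes). This is more delicate in the rotational case, where $M$ is internal to $P$ and the chain of auxiliary leaves introduced in Lemma \ref{lem: recovering M} must be shown to contribute nothing new inside $G$ once its grand orbit is intersected with what remains after removing $\mathcal{GO}(P \setminus M)$.
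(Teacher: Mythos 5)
Your outline follows the same route as the paper (remove $\mathcal{GO}(P\setminus M)$, argue that the central region empties out, exhibit an all critical $d$-gon, control the closure), but two steps have genuine problems. First, your construction of the all critical $d$-gon is wrong as stated: the $d-1$ guiding critical chords of the canonical SCM portrait each touch an endpoint of a major $\ell_i$ (their other endpoints are endpoints of the siblings $\ell_i'$ bounding the degree two Fatou gaps), not endpoints of $M$ and its siblings, and they are pairwise disjoint; they cannot be ``combined'' into a polygon attached to $M$ and its siblings. What the lemma needs is a \emph{new} all critical $d$-gon whose vertices are one endpoint of $M$ together with the corresponding endpoint of each sibling, i.e.\ $d$ points spaced exactly $\frac{1}{d}$ apart. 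For that you need the sibling portrait of $M$ in $\widehat{\mathcal{S}}$ to be the symmetric one --- which the paper deduces from $P$ lying in a maximal critical sector --- together with a length argument that there is room for such a $d$-gon in the emptied central strip (the removed majors leave less than $\frac{1}{d}$ of the circle for $M$, and all forward images of $M$ lie under it). Your first stage only places one sibling of $M$ in each critical sector, which by itself does not give the alignment of endpoints that the compatibility claim requires.

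Second, the key claim of your second stage --- that every leaf of $\widehat{\mathcal{S}}$ interior to the central region is a side of $P\setminus M$, a sibling $\ell_i'$, or a forward/backward iterate of these, ``by Theorem \ref{thm: canonical SCM props}'' --- is not contained in that theorem, and it hides the crux of the lemma: one must rule out that elements of $\mathcal{GO}(M)$ itself (which survive the removal) lie strictly inside the region bounded by $M$ and its siblings. Backward iterates of the majors never occur in isolation: every pullback polygon of $P$ has one side in $\mathcal{GO}(M)$, so the assertion that all interior leaves belong to $\mathcal{GO}(P\setminus M)$ needs an argument that those $M$-pullbacks stay outside the central region; the paper's proof leans on canonicity of $\mathcal{S}$ and on the degree two Fatou gaps sitting under the majors for exactly this point. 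Finally, you explicitly defer the closure step, but note that once interior emptiness is established it is immediate: a limit of chords each disjoint from the open convex region $G$ is again disjoint from $G$. So the outstanding work is the interior-emptiness claim and the compatibility construction, not the closure.
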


\begin{proof}
Start with $\widehat{\mathcal{S}}$ as in Lemma \ref{lem: recovering M}. Since $P$ was in a maximal critical sector, the only compatible sibling portrait is the symmetric one for $\widehat{\mathcal{S}}$. Now, if we remove $\mathcal{GO}(P\setminus M)$, then the central strip of the disk bounded by $M$ and its siblings has no long leaves in it. We claim the central strip contains a Fatou gap, $G$, bounded by $M$ and its siblings. Because $\widehat{\mathcal{S}}(M)$ was canonical, we know that there are no long leaves subdividing the central strip except for the leaves of $P$ that were removed. In the central strip, there is room for an all critical $d$-gon with each vertex attached to an appropriate endpoint of $M$ and its siblings. This follows from the fact that our majors we removed only left less than $\dfrac{1}{d}$ in our circle for $M$, and all forward images of $M$ are underneath it. Thus, $\widehat{\mathcal{L}}$ has a central gap $G$ bounded by $M$ and its siblings, and $G$ is compatible with an all critical $d$-gon.


\end{proof}

\begin{lem}\label{lem: pre-lam to lam SCM to MAC}
Let $\widehat{\mathcal{L}}$ be obtained from $\widehat{\mathcal{S}}$ by Lemma \ref{lem: pre-lam SCM to MAC}. Then, $\widehat{\mathcal{L}}$ is a $d$-invariant lamination.
\end{lem}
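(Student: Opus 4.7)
The plan is to verify conditions (1)--(3) of Definition \ref{def: Sibling invaritant lam} together with the finiteness condition (4). The governing observation is that $\mathcal{GO}(P\setminus M)$ is a grand orbit (Definition \ref{def: grand orbit}), hence automatically closed under $\sigma_d$, under $\sigma_d$-preimages, and under the formation of full sibling collections inside $\widehat{\mathcal{S}}$. Since $\widehat{\mathcal{S}}$ itself is a $d$-invariant lamination by Lemma \ref{lem: recovering M}, it follows that the pre-lamination $\widehat{\mathcal{S}}\setminus\mathcal{GO}(P\setminus M)$ already inherits forward, backward, and sibling invariance from $\widehat{\mathcal{S}}$. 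The task is therefore reduced to checking that the closure operation preserves each of these properties without introducing crossings.

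First, no crossings arise: the pre-lamination is a subset of $\widehat{\mathcal{S}}$, so any two of its leaves meet at most on $\ucirc$, and if two limit leaves in $\widehat{\mathcal{L}}$ crossed in the interior of $\cl\disk$, then sequences from the pre-lamination approximating them would for large indices also cross, contradicting the lamination property of $\widehat{\mathcal{S}}$. Closedness of $\widehat{\mathcal{L}}^*$ is automatic from the construction. For forward invariance, if $\ell_n\to\ell$ with each $\ell_n$ in the pre-lamination, continuity of $\sigma_d$ gives $\sigma_d(\ell_n)\to\sigma_d(\ell)$, and each $\sigma_d(\ell_n)$ lies in the pre-lamination, so $\sigma_d(\ell)\in\widehat{\mathcal{L}}$. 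For backward and sibling invariance at a non-degenerate $\ell\in\widehat{\mathcal{L}}$, take a full sibling collection $\{\ell'_1,\dots,\ell'_d\}\subset\widehat{\mathcal{S}}$ of preimages of $\ell$; if some $\ell'_i$ lay in $\mathcal{GO}(P\setminus M)$, then $\ell=\sigma_d(\ell'_i)$ would also, contradicting $\ell\in\widehat{\mathcal{L}}$. Hence the entire collection survives the removal. Finiteness of equivalence classes persists because we only delete leaves, so every class of $\widehat{\mathcal{L}}$ is contained in a class of $\widehat{\mathcal{S}}$.

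The main obstacle is ensuring sibling invariance at a limit leaf $\ell\in\widehat{\mathcal{L}}$ that is not itself in the pre-lamination: one needs an actual full sibling collection present as leaves of $\widehat{\mathcal{L}}$ rather than only in a limiting sense, and the relevant pullback branches should be those cut out by the new critical portrait (the all critical $d$-gon in the central gap $G$ from Lemma \ref{lem: pre-lam SCM to MAC}) rather than by the original SCM critical chords. I would handle this by observing that outside $G$ the two pullback schemes determine the same symmetric sibling positions, so the required siblings appear as limits of siblings from the pre-lamination via continuity of the appropriate branches of $\sigma_d^{-1}$; inside $G$, the symmetric siblings of $M$ are already boundary leaves of $G$, and by the construction in Lemma \ref{lem: pre-lam SCM to MAC} there are no further pullback leaves interior to $G$, so the argument closes.
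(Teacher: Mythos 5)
Your proposal is correct and follows essentially the same route as the paper: verify non-crossing, finiteness of classes, closedness, and forward/backward/sibling invariance, with the key observation that an entire grand orbit was removed so the remaining pre-lamination inherits invariance from $\widehat{\mathcal{S}}$. Your extra care with limit leaves (continuity of $\sigma_d$ and of the inverse branches when passing to the closure) only fills in details the paper's terser argument leaves implicit, so no further changes are needed.
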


\begin{proof}
To show that $\widehat{\mathcal{L}}$ from Lemma \ref{lem: pre-lam SCM to MAC} is a $d$-invariant lamination we need to show the following properties:
\begin{enumerate}
    \item $\widehat{\mathcal{L}}$ is a lamination (it does not contain crossing leaves).
    \item $\widehat{\mathcal{L}}$ has only finite equivalence classes.
    \item $\widehat{\mathcal{L}}$ is closed.
    \item $\widehat{\mathcal{L}}$ is forward, backward, and sibling invariant.
\end{enumerate}
Our definition of $\widehat{\mathcal{L}}$ in Lemma \ref{lem: pre-lam SCM to MAC} removed leaves from the $d$-invariant lamination $\widehat{\mathcal{S}}$. Since removing leaves does not cause any intersection of leaves we have (1). Similarly, for (2) since $\widehat{\mathcal{S}}$ was $d$-invariant with finite equivalence classes and removing leaves from a lamination can only decrease the size of equivalence classes, $\widehat{\mathcal{L}}$ only has finite equivalence classes. (3) follows by definition of $\widehat{\mathcal{L}}$. In the construction of $\widehat{\mathcal{L}}$, we removed the entire grand orbit of leaves which leaves $\widehat{\mathcal{L}}$ to be forward, backward, and sibling invariant. 

\end{proof}

\begin{lem}\label{lem: Canonical SCM to MAC}
Let $\widehat{\mathcal{L}}$ be the $d$-invariant lamination obtained from $\widehat{\mathcal{S}}$ by Lemma \ref{lem: pre-lam to lam SCM to MAC}. The leaf, $M$, in $\widehat{\mathcal{L}}$ is a MAC leaf, and $\widehat{\mathcal{L}}$ is the MAC lamination $\mathcal{L}(M)$. 
\end{lem}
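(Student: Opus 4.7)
The plan is to verify in sequence the two assertions of the lemma: first, that $M$ is a MAC leaf for $\widehat{\mathcal{L}}$, and second, that $\widehat{\mathcal{L}}$ coincides with the canonical pullback lamination $\mathcal{L}(M)$ of Definition \ref{def: MAC lam}.

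For the first assertion, Definition \ref{def: MAC} requires three things of $M$: that it is periodic, that it is the major of its orbit, and that it is compatible with an all critical $d$-gon. Periodicity is immediate: in the identity return case, $M$ is a side of $P$ and hence has the period of $P$; in the rotational case, $M$ was introduced in Lemma \ref{lem: recovering M} as the leaf joining the chain of adjacent majors of $P$, and its endpoints travel with the endpoints of $P$ under $\sigma_d$, so $M$ is periodic of the same period as $P$. Compatibility with an all critical $d$-gon was already furnished by Lemma \ref{lem: pre-lam SCM to MAC}, which gave a central Fatou gap $G$ bounded by $M$ and its siblings and supporting an all critical $d$-gon with one vertex on each of the $d$ endpoints of $M$ and its siblings. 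To see that $M$ is a major in its orbit, I would invoke Theorem \ref{thm: canonical SCM props}(\ref{Canonical SCM 1}) together with Corollary \ref{max cen strip}: any forward image of $M$ approaching criticality more closely than $M$ would have to enter the central strip of $M$ with both endpoints in one component of $C\cap\ucirc$, which Corollary \ref{max cen strip} forbids. Hence $M$ is (tied for) the closest to critical length in its orbit and qualifies as a major.

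For the second assertion, I would prove $\widehat{\mathcal{L}}=\mathcal{L}(M)$ by double inclusion. Since $\widehat{\mathcal{L}}$ is a closed, sibling $d$-invariant lamination by Lemma \ref{lem: pre-lam to lam SCM to MAC}, contains $M$, and is compatible with the all critical $d$-gon sitting inside its central Fatou gap, it must contain the closure of the grand orbit of $M$ under the $d$ branches of the inverse determined by the all critical $d$-gon; by Definition \ref{def: MAC lam} this gives $\mathcal{L}(M)\subset\widehat{\mathcal{L}}$. For the reverse inclusion, recall that $\widehat{\mathcal{S}}$ is the closure of the grand orbit of $P$ (with $M$ adjoined) under the branches of the inverse coming from the guiding critical portrait of $\mathcal{S}$. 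Removing $\mathcal{GO}(P\setminus M)$ leaves the closure of the grand orbit of $M$ alone, and by Lemma \ref{lem: pre-lam SCM to MAC} the only compatible critical portrait for the resulting configuration is the symmetric one using the all critical $d$-gon. Thus the branches of the inverse realized by the remaining leaves of $\widehat{\mathcal{L}}$ are precisely those prescribed by Definition \ref{def: MAC lam}, giving $\widehat{\mathcal{L}}\subset\mathcal{L}(M)$.

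The main obstacle I expect is this reverse inclusion: ensuring that the removal of $\mathcal{GO}(P\setminus M)$ does not leave behind any extraneous limit leaves, and that the branches of the inverse surviving in $\widehat{\mathcal{L}}$ really are the ones determined by the all critical $d$-gon rather than some other choice. The key leverage for this is Theorem \ref{thm: MAC and conc SCM compared}, which compares $\mathcal{L}$ and $\mathcal{S}$ and shows that any limit leaf of a canonical SCM lamination that is not in the grand orbit of $P\setminus M$ already arises as a limit of pullbacks of $M$; combined with the compatibility observation from Lemma \ref{lem: pre-lam SCM to MAC}, this pins down $\widehat{\mathcal{L}}$ as exactly $\mathcal{L}(M)$. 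Once equality is established, $M$ is by construction the MAC leaf of $\mathcal{L}(M)$ and the lemma follows.
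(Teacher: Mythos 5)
Your overall route is the same as the paper's: first show that $M$ is a MAC leaf using the central gap and the all critical $d$-gon furnished by Lemma \ref{lem: pre-lam SCM to MAC}, then identify $\widehat{\mathcal{L}}$ with the pullback lamination $\mathcal{L}(M)$ by noting that both are the closure of the grand orbit of $M$ under the scheme determined by the all critical $d$-gon. That part is fine, though terser in the paper.

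However, your argument that $M$ is the major of its orbit contains a genuine error. A forward image of $M$ that were closer to critical length than $M$ could \emph{not} enter the central strip with both endpoints in a single component of $C\cap\ucirc$: each such component has length $\frac{1}{d}-|M|<\frac{1}{d(d+1)}$, so a leaf entering the strip ``short'' is far shorter than $M$, hence farther from critical length, not closer. The dangerous case is a leaf crossing the central strip with endpoints in two \emph{different} components, and that is exactly the case Corollary \ref{max cen strip} does not exclude (note also that the corollary speaks of the orbits of the majors of $P$, while $M$ is the non-major side). So the citation does no work where you need it; to exclude the crossing case you must appeal to the single-critical-moment hypothesis itself, i.e.\ clause (2) of Definition \ref{def:SCM} together with the confinement of the forward images of $P$ (and hence of $M$) asserted in Lemma \ref{lem: pre-lam SCM to MAC}, rather than to the central strip lemma. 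A secondary caveat: Theorem \ref{thm: MAC and conc SCM compared} is stated for the SCM lamination $\mathcal{S}(M)$ produced \emph{from} a MAC lamination by Theorem \ref{thm: MAC to SCM Lamination}; invoking it for an arbitrary canonical SCM lamination $\mathcal{S}$ presupposes $\mathcal{S}=\mathcal{S}(M)$, which is essentially what is being established, so your handling of limit leaves in the reverse inclusion is circular as written. The paper instead argues directly that $\widehat{\mathcal{L}}=\cl{\mathcal{GO}(M)}$ and that this closure is exactly the pullback lamination of $M$ from Section \ref{sec: pullback}.
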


\begin{proof}
Let $\widehat{\mathcal{L}}$ be the $d$-invariant lamination obtained from $\widehat{\mathcal{S}}$ by Lemma \ref{lem: pre-lam to lam SCM to MAC}. Let us first consider when $M$ is identity return. We know the leaf, $M$, from the construction of $\widehat{\mathcal{L}}$ is identity return since the $d$-gon, $P$, in $\widehat{\mathcal{S}}$ was identity return and $M$ and its siblings bound a gap $G$ containing an all critical $d$-gon by Lemma \ref{lem: pre-lam SCM to MAC}. Rotating the all critical $d$-gon to the vertices of $M$ and its siblings shows that we have an identity return major leaf that approaches an all critical $d$-gon, and by Definition \ref{def: MAC} $M$ is a MAC leaf.

Now that we have $M$ is a MAC leaf, we want to show $\mathcal{L}(M)$ is equivalent to $\widehat{\mathcal{L}}$ giving us that $\widehat{\mathcal{L}}$ is a MAC lamination. Recall $\widehat{\mathcal{L}} = \cl{\mathcal{GO}(M)}$, and $\mathcal{L}(M)$ is the pullback lamination of $M$. These are two equivalent sets of leaves by Section \ref{sec: pullback}.

The rotational case follows with a similar argument.
\end{proof}

\begin{proof}[Proof of Theorem \ref{thm: canonical SCM to MAC lam}]
Lemmas \ref{lem: pre-lam SCM to MAC} -- \ref{lem: Canonical SCM to MAC} give us a uniquely defined construction starting with an arbitrary canonical SCM lamination $\mathcal{S}$ then leading to a MAC lamination $\widehat{\mathcal{L}}$. Since there were no choices at any step of the construction, it is uniquely determined. Thus, the construction proves the theorem. 
\end{proof}

\section{Future Work: Non-canonical SCM to Unicritical Laminations} 

Now that we have a one to one correspondence between canonical MAC laminations and canonical SCM laminations in the identity return and rotational case, we would like to extend this correspondence to the rotation return case as well. In addition, we want to understand how non-canonical SCM and canonical SCM laminations are related. This is future work on {\em tuning} a lamination by another lamination ``inserted in its Fatou gaps." In our construction from SCM to MAC, we showed that no new limit leaves were added. However, we would like to show that the limit leaves in both laminations are exactly the same.

In addition, we would like to extend our results to ``locally unicritical" laminations. In \cite{Blokh:2021}, the idea of ``flower-like sets" suggests that our correspondence would apply in laminations containing such sets. This idea follows from the fact that in a portion of the circle, we are unicritical and invariant which is what we call ``locally unicritical."


\bibliographystyle{amsplain}

\begin{thebibliography}{}


\bibitem{Bhattacharya:2021} S. Bhattacharya, A. Blokh, D. Schleicher. \newblock Unicritical Laminations. \newblock  {\em Fundamental Mathematicae} 258 (2022), 25-63.

\bibitem{Barry:2015} B. Barry. \newblock On the simplest lamination of a given identity return triangle. \newblock PhD Dissertation UAB 2015.

\bibitem{Blokh:2006} A. Blokh, J. Malaugh, J. Mayer, L. Oversteegen, D. Parris. \newblock Rotational subsets of the circle under $z^d$. \newblock {\em Topology and its Applications} 153 (2006), 1540–1570.

\bibitem{Blokh:2011} A. Blokh, C. Curry, L. Oversteegen. \newblock Locally connected models for Julia sets. \newblock {\em Advances in Mathematics} 226 (2011), 1621--1661.

\bibitem{Blokh:2021} A. Blokh, L. Oversteegen, V. Timorin, Y. Wang \newblock A MODEL OF THE CUBIC CONNECTEDNESS LOCUS. \newblock Preprint. 2022.

\bibitem{mimbs:2013} A. Blokh, D. Mimbs, L. Oversteegen, K. Valkenburg. \newblock Laminations in the language of leaves. \newblock
{\em Trans. Amer. Math. Soc.} 365 (2013), 5367--5391.

\bibitem{Blokh:2016} A. Blokh, L. Oversteegen, R. Ptacek, V. Timorin. \newblock Laminations from the main cuboid. \newblock {\em Discrete Dynamical Systems - Series A} 36 (2016) no. 9, 4665--4702.

\bibitem{Childers:2007} D. Childers,  \newblock  Wandering polygons and recurrent critical leaves. \newblock {\em Ergodic Theory Dynam. Systems} 27 (2007) no. 1, 87--107.

\bibitem{Cosper:2016} D. Cosper, J. Houghton, J. Mayer,  L. Mernik, and J. Olson. \newblock Central strips of sibling leaves in laminations of the unit disk. \newblock {\em Topology Proc.} 48 (2016), 69--100.

\bibitem{Falcione:2019} C. Falcione. \newblock {Lamination Builder.} \newblock {https://csfalcione.github.io/lamination-builder/}

\bibitem{Kiwi:2001} J. Kiwi. \newblock Wandering orbit portraits. \newblock {\em Trans. Amer. Math. Soc.} 354 (2001) no. 4, 1473--1485.

\bibitem{Milnor:2006}J. Milnor. \newblock {\em Dynamics in One Complex Variable}, volume no. 160. \newblock Princeton University Press, Princeton, 3rd edition, 2006.

\bibitem{Schleicher:1999} D. Schleicher. \newblock  On fibers and local connectivity of Mandelbrot and multibrot sets. \newblock arXiv:Math/9902155.

\bibitem{Schleicher:2009} D. Schleicher. \newblock Appendix to ``On the geometry and dynamics of iterated maps" \newblock in {\em Complex Dynamics; Families and Friends} \newblock edited by D. Schleicher (A K Peters, 2009).




\end{thebibliography}

\end{document}